\DeclareMathOperator{\I}{ \mathbb{I} }
\DeclareMathOperator{\U}{  \mbox{Unif}}
\DeclareMathOperator{\vol}{vol}
\DeclareMathOperator{\UOrl}{ X^{(n,V)}} %%% Check if makro is good...
\DeclareMathOperator{\UOrlR}{ Unif \left( \mathbb{B}_R^{(n,V) } \right)}
\DeclareMathOperator{\R}{\mathbb R}
\DeclareMathOperator{\N}{\mathbb N}
\DeclareMathOperator{\Cov }{ Cov }
\DeclareMathOperator{\supp }{ supp }
\newtheorem{thm}{Theorem}[section]
\newtheorem{cor}[thm]{Corollary}
\newtheorem{df}[thm]{Definition}
\newtheorem{proposition}[thm]{Proposition}
\newtheorem{rem}[thm]{Remark}
\newtheorem{lem}[thm]{Lemma}
\newtheorem{thmalpha}{Theorem}
\theoremstyle{definition}
\newtheorem{thmbeta}{Theorem}
\theoremstyle{definition}
\newtheorem{Ass}[thmbeta]{Assumption}
\begin{document}
\title{Sharp concentration phenomena in high-dimensional Orlicz balls}
\author{Lorenz Frühwirth and Joscha Prochno}
\date{}

\maketitle

\begin{abstract}
In this article, we present a precise deviation formula for the intersection of two Orlicz balls generated by Orlicz functions $V$ and $W$. Additionally, we establish a (quantitative) central limit theorem in the critical case and a strong law of large numbers for the "$W$-norm" of the uniform distribution on $\mathbb{B}^{(n,V)}$. Our techniques also enable us to derive a precise formula for the thin-shell concentration of uniformly distributed random vectors in high-dimensional Orlicz balls. In our approach we establish an Edgeworth-expansion using methods from harmonic analysis together with an exponential change of measure argument.
\end{abstract}

% % % % % % % % % % % % % % % 
\section{Introduction \& Main results}
% % % % % % % % % % % % % % %
Let $n \in \N,1 \leq p < \infty$ and let $\mathbb{D}^{(n,p)}$ be the volume normalized $\ell_p$-ball in $\R^n$. In \cite{SS1991} Schechtman and Schmuckenschläger studied the asymptotic behavior of the intersection of $\mathbb{D}^{(n,p)} $ with an appropriately scaled $\ell_q$-ball and found the threshold behavior
\begin{equation}
\label{Eq_vol_int}
\vol_{n} \left( \mathbb{D}^{(n,p)} \cap t \mathbb{D}^{(n,q)} \right) \xrightarrow{n \rightarrow \infty} 
\begin{cases}
    0, & t < A_{p,q}^{-1}\\
    1, & t > A_{p,q}^{-1},
\end{cases}
\end{equation}
where $A_{p,q} \in (0, \infty)$ is an explicit constant. Their proof mainly relied on a probabilistic representation of the uniform distribution on $\ell_p^n$-balls obtained by Schechtman and Zinn \cite{SZ1990} and independently by Rachev and Rüschendorf \cite{RSR1991}). More precisely, for $X^{(n,p)} \sim \mathbb{D}^{(n,p)}$, we can write
\begin{equation}
\label{Eq_Prob_Rep}
    X^{(n,p)} \stackrel{d}{=} U^{1/n} \frac{\left( Y_1, \ldots, Y_n \right)}{|| \left( Y_1, \ldots, Y_n \right) ||_p},
\end{equation}
where $U \sim \U ([0,1])$ and $(Y_i)_{i \in \N}$ are independent and identically distributed (iid) with Lebesgue density
\begin{equation*}
   \R \ni x \mapsto \frac{1}{2 p^{1/p} \Gamma(1+ 1/p )}e^{ -|x|^p/p}.
\end{equation*}
It is then possible to represent the volume in \eqref{Eq_vol_int} as the probability that a random vector, which is uniformly distributed in the (normalized) $\ell_p^n$-ball, has $\ell_q$-norm smaller than $t$. The probabilistic representation of $X^{(n,p)}$ allows us to work with independent random variables and thus the limit in \eqref{Eq_vol_int} is essentially an application of the strong law of large numbers. The threshold case $t=A_{p,q}$ first remained an open problem before Schmuckenschläger proved a central limit theorem for the $\ell_q$-norm of a point chosen randomly in an $\ell_p^n$-ball \cite{schmuckenschlager2001clt}. The existence of a probabilistic representation as in \eqref{Eq_Prob_Rep} gave rise to the rich and flourishing theory of high-dimensional $\ell_p$-geometry, which was intensively studied over the past 15 years (we refer the interested reader to the surveys \cite{Prochnoldpmdp,PTTSurvey}). One aim of the present paper is to generalize the Schechtman--Schmuckenschläger result from \eqref{Eq_vol_int} in a different direction. All mentioned results above heavily use the representation in \eqref{Eq_Prob_Rep}, which is limited to $\ell_p$-balls. Here we want to study concentration phenomena like \eqref{Eq_vol_int} for the intersection of so-called Orlicz balls. Typically, taking a symmetric and convex function $V: \R \rightarrow [0, \infty)$ with $V(x)=0$ if and only if $x=0$ and some $R \in (0, \infty)$, the associated Orlicz ball with "radius" $R$ is defined as 
\begin{equation}
    \label{Def_Orlicz_ball}
    \mathbb{B}^{(n,V)}_R := \left\{ x \in \R^n \,:\, \sum_{i=1}^n V(x_i) \leq n R \right\};
\end{equation}
in this work we deal with a slightly more restrictive notion of an Orlicz function (see Definition \ref{Definition_Orlicz_ball} and Remark \ref{Rem_Def_Orl}). Orlicz spaces serve as natural generalizations of $\ell_p$-spaces and are fundamental elements within the class of symmetric Banach sequence spaces. They have been intensively studied
in the functional analysis literature and we direct the interested reader to \cite{HAO2006303, Kam1984,Kosmol2011Opt,Kwapien1985, Schuett1994,PS2012Comb, RS1988} and the references cited therein. In \cite{KP2021}, using the maximum entropy principle from statistical mechanics and elements from (sharp) large deviations theory, Kabluchko and Prochno were able to bypass the non-existence of a classical Schechtman--Schmuckenschläger-type probabilistic representation and computed the asymptotic volume of high-dimensional Orlicz balls; independently similar problems were considered by Barthe and Wolff in \cite{barthe2023volume} with a view towards confirming the Kannan--Lov\'asz--Simonovits (KLS) spectral gap conjecture. Although not stated explicitly it follows from the proof of \cite[Theorem B]{KP2021} that the convergence rate is exponential. In our Theorem \ref{ThmSLD} we are able to present a precise version of \cite[Theorem B]{KP2021}, which also includes lower-order terms and could not be obtained by the approach developed in \cite{KP2021}. The study of moderate and large deviations principles in the realm of asymptotic geometric analysis has been relatively recent, pioneered by Gantert, Kim, and Ramanan in \cite{gantert2017}, and further developed by Prochno, Kabluchko, and Thäle in \cite{KPT2019_II}. In their recent work \cite{alonso2021}, Alonso-Gutiérrez, Prochno, and Thäle uncovered a connection between the investigation of moderate and large deviations for isotropic log-concave random vectors and the famous KLS conjecture. Unlike central limit theorems, (sharp) large deviation principles are highly sensitive to the distribution of the underlying random variables. 
Although large deviations principles (LDPs), unlike concentration results or large deviation upper bounds, precisely identify the asymptotic exponential decay rate and allow for the identification of conditional limit laws (see \cite{CondLim_FP, RKConLim}), they have a notable drawback. In general, LDPs only provide approximate estimates of the probability that a sequence of random variables lies in an atypical region. Specifically, they characterize the limit of the logarithms of the deviation probabilities as the dimension $n$ tends to infinity. Sharp LDPs, on the other hand, not only provide the precise exponential decay but also include additional subexponential factors, offering more accurate estimates and are more in the spirit of asymptotic geometric analysis.
\par{}
While numerous articles investigate the asymptotic behavior of high-dimensional $\ell_p$-balls and spheres (see, e.g., \cite{apt2018, gantert2016, gantert2017, kabluchko2021, KPT2019_I, KPT2019_II, kaufmann2021,RKConLim}), the study becomes more nuanced and delicate when extending to high-dimensional Orlicz balls. This can be seen in the work of Kabluchko and Prochno \cite{KP2021} and the complexity stems from the absence of a probabilistic representation for the uniform distribution on $\mathbb{B}^{(n,V)}_R$ and because general Orlicz functions are not homogenous. The former problem can be resolved by employing suitable Gibbs measures $\mu_{\alpha}$, distributed according to the Lebesgue density
\begin{equation*}
\frac{d \mu_{\alpha}}{dx}(x) = \frac{e^{\alpha V(x)}}{Z_{\alpha}}, \quad x \in \mathbb{R},
\end{equation*}
where, in statistical mechanics terminology, $\alpha \in (-\infty, 0)$ is referred to as the inverse critical temperature, and $Z_{\alpha}$ is the partition function, i.e., the normalizing constant; this approach was initiated in \cite{KP2021} and has led to successful breakthroughs in addressing geometric questions within high-dimensional Orlicz spaces by probabilistic means (see, e.g., \cite{AP2022,CondLim_FP,PJ2020OrliczLim,kim2022,RKConLim}).  
Turning to sharp large deviations, those can be traced back to Bahadur and Rao \cite{BahadurRao1960} and Petrov \cite{PetrovSDev} (see also \cite{eichelsbacher2003}). Essentially, they used an exponential change of measure to identify the exponential decay of the probability that the empirical mean of iid random variables exceeds their common mean. The residual term can then be approximated using a first-order Edgeworth expansion (see, e.g., \cite[p. 293]{PetrovSDev}). The proof of Theorem \ref{ThmSLD} adopts a similar approach, but encounters certain challenges in establishing an Edgeworth expansion for the resulting sum of random variables (see Section \ref{Section:Edgeworth} and the discussion therein). After identifying the precise asymptotic formula for the Schechtman--Schmuckenschläger ratio \eqref{Eq_vol_int} in the case of Orlicz balls, one may raise questions about its behavior in the critical case; note that this question could not be addressed by the methods in \cite{KP2021}. This inquiry is addressed in our Theorem \ref{Thm_SLLN_CLT}, where we demonstrate that the normalized volume intersection for high-dimensional Orlicz balls adheres to a quantitative Central Limit Theorem with a speed of order $n^{-1/2}$, analogous to the classical Berry--Esseen theorem.
\par{}
Our techniques also allow to give a precise description of the so-called thin-shell concentration of high-dimensional Orlicz balls (see Theorem \ref{ThmSLD_thinshell}). Roughly speaking, for small values of $\delta \in (0, \infty)$, we provide a precise formula for 
\begin{equation}
\label{Eq_thinshell}
    \mathbb{P} \left[ \left | \frac{\| X^{(n,V)} \|_2^2}{n} -  m \right | \geq \delta  \right],
\end{equation}
where $X^{(n,V)}$ follows a uniform distribution on an Orlicz ball generated by the Orlicz function $V$. Estimating probabilities of the form \eqref{Eq_thinshell} has attracted considerable attention since the works of Sudakov \cite{sudakov78}, Diaconis and Freedman \cite{diaconis1984}, von Weizsäcker \cite{vonWeiz1997}, and Antilla, Ball, and Perissinaki \cite{anttila2003}. We also highlight the breakthrough result by Klartag \cite{klartag2007}, who proved that the marginals of an isotropic random vector are close to a Gaussian distribution. In \cite{anttila2003}, it was established that any isotropic (with mean $0$ and identity covariance matrix) random vector $X^{(n)} \in \R^n$ has (approximately) Gaussian marginals if $ \|X^{(n)} \|_2 / \sqrt{n}$ concentrates around $1$. In other words, $||X^{(n)}||_2$ concentrates in a thin-shell with radius $\sqrt{n}$. This principle has led to the thin-shell width conjecture, which proposes that there exists an absolute constant $C \in (0, \infty)$ such that for every $n \in  \N$ and every
isotropic random vector $X^{(n)} \in \R^n$ one has $ \mathbb{E} \left[ \| X^{(n)} \|_2 - \sqrt{n}\right]^2 \leq C$. This is equivalent to the famous variance conjecture, first formally proposed in \cite{bobkov2003} (see also \cite{alonso2013}). It asserts the existence of an absolute constant $C \in (0, \infty)$ such that for any isotropic random vector $X^{(n)} \in \R^n$, it holds that $\mathbb{V}\left[ \|X^{(n)} \|_2^2 \right] \leq C n$. The variance conjecture has been successfully approached for various important examples of isotropic convex bodies (see, e.g., \cite{wojtaszczyk2007}, \cite{kolesnikov2018} and the references cited therein). The variance conjecture is a special case of the renowned KLS conjecture, which roughly says that, up to a universal constant $C \in (0, \infty)$ independent of the dimension $n$, the best way to divide a convex body into two pieces of equal mass in order to minimize their interface is a hyperplane bisection. For a long time the best known bound was obtained by Bourgain \cite{Bourgain91, Bourgain2003} where it is proven that $C = O \left( n^{1/4} \log n \right)$. A few years ago Chen \cite{Chen21} accomplished a breakthrough by obtaining a subpolynomial bound for the KLS constant. This work led to great further improvements (e.g., by Klartag and Lehec \cite{KlarLeh2022}) with the currently best known estimate for the KLS constant of $O( \sqrt{\log n})$ obtained by Klartag in \cite{klartag2023} (see also the recent survey \cite{klarLehec2024} for a more comprehensive discussion on the KLS conjecture and related topics).
\par{}
Returning to thin-shell concentration estimates, we mention that in 2018, Lee and Vempala \cite{lee2018} proved that for $t  \in (0,\infty)$,
\begin{equation*}
\mathbb{P} \left[ \left| \frac{\|X^{(n)}\|_2}{\mathbb{E}\left[ \|X^{(n)}\|_2\right]} -1 \right | \geq t \right] \leq e^{-c \min \left \{ t, t^2 \right \} \sqrt{n}} \quad \text{and} \quad
    \mathbb{E} \left[ \|X^{(n)} \|_2 - \sqrt{n} \right]^2 \leq C \sqrt{n};
\end{equation*}
another estimate of a similar order was obtained by Guédon and Milman \cite{guedon2011} (see also \cite{naor2007}, \cite{schechtman2000}, and \cite{alonso2018} for more precise estimates in case of $\ell_p$-balls). More recently, Prochno and Alonso-Gutierrez \cite{AP2022}, using a moderate deviations perspective, were able to provide precise estimates for probabilities of the form
\begin{equation*}
    \mathbb{P} \left[ \left| \frac{\| X^{(n)} \|_2^2}{n L_Z^2} - 1 \right | \geq t_n \right],
\end{equation*}
where $X^{(n)}$ is uniformly distributed in a high-dimensional Orlicz ball $\mathbb{B}^{(n,V)}_R$, $L_Z^2$ is the asymptotic value of the isotropic constant (see also \cite[Theorem A]{AP2022}) and $(t_n)_{n \in \N}$ is a sequence that grows to infinity slower than $\sqrt{n}$. Their approach requires the Orlicz function $V$ to grow sufficiently fast; more precisely, they assumed that
\begin{equation*}
\lim_{x \rightarrow \infty} \frac{V(x)}{x^2}= \infty.
\end{equation*}
In our Theorems \ref{ThmSLD} and \ref{ThmSLD_thinshell}, we are able to weaken this assumption and we only require $V$ to grow at least as fast as $x \mapsto x^2$ (see also Assumption \ref{Ass_A}).

% % % % % % % % % % % % % % %
\subsection{Main results}
% % % % % % % % % % % % % % %

Before stating our main results, Theorem \ref{ThmSLD}, \ref{Thm_SLLN_CLT} and \ref{ThmSLD_thinshell}, we introduce a few quantities. An Orlicz function is a symmetric and convex function $V: \R \rightarrow [0,\infty)$ with $V(x)=0$ if and only if $x=0$, which is essentially $\mathcal{C}_2$ (see Section \ref{sec:Orlicz_Gibbs} for a precise definition). The associated Orlicz ball $\mathbb{B}_R^{(n,V)}$ with radius $R \in (0,\infty)$ in $\R^n$ is then defined as in \eqref{Def_Orlicz_ball}. For $n \in \N$ and $R \in (0, \infty)$, let $ X^{(n,V)} \sim \UOrlR$. Then, as shown in Theorem \ref{Thm_SLLN_CLT}, for any (linearly independent) Orlicz functions $V$ and $W$, the almost sure limit
\begin{equation}
\label{Eq_W_Expectation}
m := \lim_{n \rightarrow \infty} \frac{1}{n} \sum_{i=1}^n W \left( X_i^{(n,V)} \right) \in (0, \infty)
\end{equation}
exists.
\par{}
Let $V,W: \R \rightarrow [0,\infty)$ be two Orlicz functions and, for $(\alpha, \beta)  \in  \R^2$, we define a measure $\mu_{\alpha, \beta}$ via the Lebesgue density
\begin{equation*}
    \frac{d \mu_{\alpha, \beta}(x)}{d  x} = \frac{ e^{\alpha V(x) + \beta W(x)}}{Z_{\alpha, \beta}}, \quad x \in \R,
\end{equation*}
provided $Z_{\alpha, \beta} := \int_{\R} e^{\alpha V(x) + \beta W(x)} dx < \infty$. Gibbs measures of this or a similar form have proven to be very useful when dealing with the asymptotic behavior of high-dimensional Orlicz balls (for more background on the maximum entropy principle for Orlicz spaces, we refer to Section 1.2 in \cite{KP2021}). Moreover, we define $\varphi: \R^2 \rightarrow (-\infty,\infty]$ with
\[\varphi( \alpha, \beta) := \log Z_{\alpha, \beta }.\]
Let $\I : \R^2 \rightarrow [0, \infty]$ be the Legendre transform of $\varphi$, i.e., for $ (x,y) \in \R^2$, we set
\begin{equation}
\label{Eq_GRF}
    \I(x,y):= \sup_{(\alpha, \beta) \in \R^2} \left[ \alpha x + \beta y - \varphi( \alpha, \beta) \right],
\end{equation}
which is well defined since $\varphi$ is convex.

\begin{Ass}
\label{Ass_A}
Let $V,W : \R \rightarrow [0, \infty)$ be two Orlicz functions. We say $V$ and $W$ satisfy Assumption \ref{Ass_A}, if 
\begin{equation*}
  \liminf_{x \rightarrow \infty} \frac{V(x)}{W(x)} > 0
\end{equation*}
\end{Ass}
Assumption \ref{Ass_A} essentially means that $W$ grows at most as fast as $V$.
\begin{Ass}
 \label{Ass_B}
 Let $V,W : \R \rightarrow [0, \infty)$ be two Orlicz functions. We say $V$ and $W$ satisfy Assumption \ref{Ass_B}, if 
 \begin{equation*}
     \lim_{x \rightarrow \infty} \frac{V(x)}{W(x)} = \infty .
 \end{equation*}
 \end{Ass}
Assumption \ref{Ass_B} requires $V$ to grow much faster than $W$ and we note that Assumption \ref{Ass_B} implies \ref{Ass_A}. We highlight that similar conditions to Assumption \ref{Ass_B} were already imposed in, e.g., \cite{AP2022,RKConLim}.
\begin{rem}
For example, Assumption \ref{Ass_B} is satisfied for $V(x) = |x|^p$ and $W(x)= |x|^q$ if $p > q \geq 1$. The Orlicz functions $V(x):=x^4+x^2$ and $W(x):=x^4$ satisfy Assumption \ref{Ass_A}, but not Assumption \ref{Ass_B} (see also Remark \ref{Rem_Cond_thm}, where we demonstrate the necessity of distinguishing between Assumptions \ref{Ass_A} and \ref{Ass_B}!). 
\end{rem}
In the theory of large deviations, functions as $\I$ from \eqref{Eq_GRF} are usually referred to as good rate functions (GRF). This means that all level sets of $\I$ are compact.

\begin{thmalpha}
\label{ThmSLD}
$(i)$ Let $R \in (0, \infty)$ and $V,W: \R \rightarrow [0, \infty)$ be linearly independent Orlicz functions that satisfy Assumption \ref{Ass_A}. Then there exists an $\varepsilon \in (0, \infty)$ such that for all $t \in (m, m + \varepsilon)$, we have
\begin{equation}
\label{Eq_SharpDev_V_W}
    \frac{ \vol_{n} \left( \mathbb{B}_R^{(n,V)} \cap \mathbb{B}_t^{(n,W)} \right)}{\vol_{n} \left( \mathbb{B}_R^{(n,V)} \right)} = 1- \frac{1}{\sqrt{2 \pi n} \sigma}   e^{ -n \I(R,t)  } (1+o(1)),
\end{equation}
where $m \in (0, \infty)$ is the constant from \eqref{Eq_W_Expectation}, $\I $ is the GRF from \eqref{Eq_GRF} and $\sigma = \sigma(V,W,R,t)$ is an explicit positive quantity.
\par{}
$(ii)$ If $V,W: \R \rightarrow [0, \infty)$ satisfy Assumption \ref{Ass_B}, then \eqref{Eq_SharpDev_V_W} holds for all $t \in (m, W(V^{-1}(R))$.
\end{thmalpha}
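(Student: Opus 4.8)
The plan is to read \eqref{Eq_SharpDev_V_W} as a sharp large-deviation estimate for a tail probability, to evaluate both volumes by an exponential change of measure that reduces everything to sums of i.i.d.\ random variables, and finally to extract the sub-exponential factor from a local limit (Edgeworth) expansion. Since $\mathbb{B}_t^{(n,W)}=\{x\in\R^n:\sum_{i=1}^nW(x_i)\le nt\}$, for $X^{(n,V)}\sim\UOrlR$ the ratio in \eqref{Eq_SharpDev_V_W} equals $\prb[\tfrac1n\sum_{i=1}^nW(X^{(n,V)}_i)\le t]$, so that
\[
1-\frac{\vol_n\big(\mathbb{B}_R^{(n,V)}\cap\mathbb{B}_t^{(n,W)}\big)}{\vol_n\big(\mathbb{B}_R^{(n,V)}\big)}=\frac{\int_{\R^n}\mathbbm{1}\!\left[{\textstyle\sum_iV(x_i)\le nR}\right]\mathbbm{1}\!\left[{\textstyle\sum_iW(x_i)>nt}\right]dx}{\int_{\R^n}\mathbbm{1}\!\left[{\textstyle\sum_iV(x_i)\le nR}\right]dx},
\]
which, by \eqref{Eq_W_Expectation} and because $t>m$, is exponentially small and has to be identified up to a factor $1+o(1)$. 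For $(\alpha,\beta)$ in the effective domain $\mathcal{D}:=\{(\alpha,\beta):Z_{\alpha,\beta}<\infty\}$ of $\varphi$, and $Y_1,\dots,Y_n$ i.i.d.\ with law $\mu_{\alpha,\beta}$, the substitution $dx_i=Z_{\alpha,\beta}\,e^{-\alpha V(x_i)-\beta W(x_i)}\mu_{\alpha,\beta}(dx_i)$ rewrites both integrals as expectations; for the numerator,
\[
\int_{\R^n}\mathbbm{1}\!\left[{\textstyle\sum_iV(x_i)\le nR,\ \sum_iW(x_i)>nt}\right]dx=Z_{\alpha,\beta}^{\,n}\,\E\!\left[\mathbbm{1}\!\left[{\textstyle\sum_iV(Y_i)\le nR,\ \sum_iW(Y_i)>nt}\right]e^{-\alpha\sum_iV(Y_i)-\beta\sum_iW(Y_i)}\right],
\]
and similarly (with $\beta=0$) for the denominator.

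I would then choose $(\alpha^*,\beta^*)\in\mathcal{D}$ as the maximiser in \eqref{Eq_GRF} at the point $(R,t)$, characterised by $\nabla\varphi(\alpha^*,\beta^*)=(R,t)$, i.e.\ $\E_{\mu_{\alpha^*,\beta^*}}[V(Y)]=R$ and $\E_{\mu_{\alpha^*,\beta^*}}[W(Y)]=t$. Writing $\alpha_0<0$ for the unique point with $\partial_\alpha\varphi(\alpha_0,0)=R$, the constant in \eqref{Eq_W_Expectation} equals $m=\partial_\beta\varphi(\alpha_0,0)=\E_{\mu_{\alpha_0,0}}[W(Y)]$, and since $t>m$ the convexity of $\varphi$ forces $\beta^*>0$. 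For which $t$ such a maximiser exists inside $\mathcal{D}$ is exactly where the two assumptions enter. Under Assumption~\ref{Ass_A} one has $V(x)\ge cW(x)$ for all large $|x|$ and some $c>0$, so that integrability of $e^{\alpha V+\beta W}$ forces $\beta\lesssim c|\alpha|$; the arc $\{\partial_\alpha\varphi=R\}\cap\mathcal{D}$ is then bounded, $\partial_\beta\varphi$ increases along it from $m$ up to some finite value only, and the maximiser exists precisely for $t\in(m,m+\varepsilon)$ with a suitable $\varepsilon>0$ --- this gives part~$(i)$. Under Assumption~\ref{Ass_B} the term $\alpha V$ dominates $\beta W$ for every $\beta\in\R$, so $\mathcal{D}\supseteq(-\infty,0)\times\R$; as $\beta^*\to\infty$ along $\{\partial_\alpha\varphi=R\}$ the law $\mu_{\alpha^*,\beta^*}$ concentrates at $\pm V^{-1}(R)$ and $\partial_\beta\varphi$ sweeps out the entire interval $\big(m,W(V^{-1}(R))\big)$ --- the upper endpoint being $\max\{\tfrac1n\sum_iW(x_i):\tfrac1n\sum_iV(x_i)=R\}$, attained at $x_1=\dots=x_n=V^{-1}(R)$ --- which gives part~$(ii)$. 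That the Legendre duality is non-degenerate on the relevant range (so that $\I$ is a genuine good rate function there and $\nabla^2\varphi$ is positive definite, the latter by linear independence of $V$ and $W$) is a routine preliminary.

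With this choice set $\Sigma:=\nabla^2\varphi(\alpha^*,\beta^*)=\Cov_{\mu_{\alpha^*,\beta^*}}\!\big(V(Y),W(Y)\big)$ and $S_n:=\sum_iV(Y_i)$, $T_n:=\sum_iW(Y_i)$, so that $(S_n,T_n)$ has mean $(nR,nt)$ and covariance $n\Sigma$. On $\{S_n\le nR,\ T_n>nt\}$ the weight $e^{-\alpha^*S_n-\beta^*T_n}=e^{-n(\alpha^*R+\beta^*t)}e^{-\alpha^*(S_n-nR)-\beta^*(T_n-nt)}$ is of order $1$ only when $S_n-nR$ and $T_n-nt$ are both of order $1$, and since $\alpha^*<0<\beta^*$ the resulting two one-sided exponential integrals converge. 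Substituting $S_n=nR-u$, $T_n=nt+v$ (with $u,v\ge0$) and inserting a two-dimensional Edgeworth expansion for the density of $(S_n,T_n)$ near its mean, $f_{(S_n,T_n)}(nR-u,nt+v)=\tfrac{1}{2\pi n\sqrt{\det\Sigma}}(1+o(1))$ uniformly for $(u,v)$ in compact sets, should give
\[
\E\!\left[\mathbbm{1}\!\left[{\textstyle S_n\le nR,\ T_n>nt}\right]e^{-\alpha^*S_n-\beta^*T_n}\right]=\frac{e^{-n(\alpha^*R+\beta^*t)}}{2\pi n\sqrt{\det\Sigma}\;|\alpha^*|\,\beta^*}\,(1+o(1)).
\]
Multiplying by $Z_{\alpha^*,\beta^*}^{\,n}$ and using $\varphi(\alpha^*,\beta^*)-\alpha^*R-\beta^*t=-\I(R,t)$ identifies the numerator; the one-dimensional version of the same computation (tilt by $\mu_{\alpha_0,0}$, use the classical Edgeworth expansion for $\sum_iV(Y_i)$) gives $\vol_n(\mathbb{B}_R^{(n,V)})=\big(\sqrt{2\pi n}\,\sqrt{\partial_\alpha^2\varphi(\alpha_0,0)}\,|\alpha_0|\big)^{-1}e^{-n(\alpha_0R-\varphi(\alpha_0,0))}(1+o(1))$, whose exponential rate $\alpha_0R-\varphi(\alpha_0,0)=\inf_y\I(R,y)$ is attained at $y=m$. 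Forming the quotient, the exponential factors combine into $e^{-n(\I(R,t)-\inf_y\I(R,y))}$, i.e.\ the rate appearing in \eqref{Eq_SharpDev_V_W}, and the remaining factors into $\tfrac{1}{\sqrt{2\pi n}\,\sigma}$ with the explicit constant $\sigma=\sigma(V,W,R,t)=\frac{\beta^*|\alpha^*|\sqrt{\det\Sigma}}{|\alpha_0|\,\sqrt{\partial_\alpha^2\varphi(\alpha_0,0)}}>0$; this is \eqref{Eq_SharpDev_V_W}.

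The main obstacle is the local limit input. One must prove, with uniform remainder control, an Edgeworth expansion for the $\R^2$-valued sum $(V(Y_i),W(Y_i))_{i\le n}$ under the tilted laws $\mu_{\alpha^*,\beta^*}$ (and its one-dimensional analogue for the denominator), valid uniformly over $O(1)$-neighbourhoods of the corner $(nR,nt)$ rather than merely at a single point. This requires a Cram\'er-type non-lattice condition for the vector $(V(Y),W(Y))$ --- precisely where linear independence of $V$ and $W$ is used, to exclude a lattice carried by a line --- together with sufficient decay of the characteristic function of $\mu_{\alpha^*,\beta^*}$ away from the origin and enough finite moments after tilting (provided by the Orlicz regularity and the growth comparison in Assumption~\ref{Ass_A}, respectively Assumption~\ref{Ass_B}), as well as control of the multidimensional Fourier-inversion remainder near a boundary. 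This is the content of Section~\ref{Section:Edgeworth}; everything else is bookkeeping around the change-of-measure identity and the variational description of $\I$.
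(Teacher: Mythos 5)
Your proposal follows essentially the same route as the paper: an exponential change of measure to the Gibbs law $\mu_{\alpha,\beta}$ with $\nabla\varphi(\alpha,\beta)=(R,t)$, a two-dimensional Edgeworth expansion for $(V(Y_i),W(Y_i))$ (requiring a Cram\'er-type non-lattice condition, which is where linear independence of $V$ and $W$ enters, established in the paper via Van der Corput estimates in Section~\ref{Section:Edgeworth}), asymptotic evaluation of the resulting tilted integral over the quadrant, and division by the analogous expansion for $\vol_n(\mathbb{B}_R^{(n,V)})$ from Lemma~\ref{Lem_Vol_OB}. Your description of the admissible range of $t$ --- bounded arc $\{\partial_\alpha\varphi=R\}\cap\mathcal{D}_\varphi$ under Assumption~\ref{Ass_A}, concentration of $\mu_{\alpha,\beta(\alpha)}$ at $\pm V^{-1}(R)$ as $\alpha\to-\infty$ under Assumption~\ref{Ass_B} --- is exactly the content of Lemmas~\ref{Lem_Domain_phi} and~\ref{Lem_AuxL2}, though the rigorous argument for the latter (tightness, subsequential weak limits, and ruling out nontrivial limiting variance) is more involved than your sketch suggests.

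One small but real point: you phrase the local-limit input as a density-form Edgeworth expansion for $(S_n,T_n)$, ``uniformly on compact sets.'' Since the single vector $(V(Y),W(Y))$ lives on a one-dimensional curve in $\R^2$, the existence and uniformity of such a density expansion is delicate, and uniformity on compacts alone does not close the argument: the integral in $e^{-\alpha^*(S_n-nR)-\beta^*(T_n-nt)}$ runs over the entire quadrant $\{u\ge0,\ v\ge0\}$, so one also needs tail control. The paper sidesteps both issues by using the Edgeworth expansion for the \emph{distribution function} (Lemma~\ref{Lem_AuxL3}) and integrating the exponential weight against $dF_n$, with the contribution of the correction terms and remainder bounded in Lemma~\ref{Lem_Asympt_Int}. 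Your acknowledgement of the ``Fourier-inversion remainder near a boundary'' shows awareness of the gap, but the CDF form is what actually makes the bookkeeping clean. (As a minor aside, your explicit $\sigma=\beta^*|\alpha^*|\sqrt{\det\Sigma}/(|\alpha_0|\sqrt{\partial_\alpha^2\varphi(\alpha_0,0)})$ matches what one obtains from a direct combination of Lemma~\ref{Lem_Vol_OB} with the two-dimensional Gaussian integral, and is the correct algebraic form.)
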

% \begin{rem}
%     In Theorem \ref{ThmSLD}, $V^{-1}$ denotes the inverse of $V$ when restricted on the non-negative real numbers. $V^{-1}$ exists, since $V: [0, \infty) \rightarrow [0, \infty)$ is invertible as a continous and strictly increasing mapping.
% \end{rem}

\begin{rem}
We observe that in the case of $W(x):= |x|^p$ for some $p \geq 1$, we can express
\begin{align*}
    \mathbb{P} \left[ \left | \left| X^{(n,V)} \right | \right |_p^p \geq t \right] &= 1 - \frac{ \vol_{n} \left( \mathbb{B}_R^{(n,V)} \cap \mathbb{B}_t^{(n,W)} \right)}{\vol_{n} \left( \mathbb{B}_R^{(n,V)} \right)}. 
\end{align*}
Hence, Theorem \ref{ThmSLD} generalizes \cite[Theorem 11]{kaufmann2021} and provides a refinement of \cite[Theorem B]{KP2021}. In several related works (see, e.g.,\cite{kaufmann2021,liao2024geometric}) the admissible values of $t$ are non-explicit. In Theorem \ref{ThmSLD} $(ii)$ we can identify the explicit interval $(m, W(V^{-1}(R)))$ such that the sharp asymptotic formula given in \eqref{Eq_SharpDev_V_W} holds for all values of $t$ in that interval. We consider this to be a non-trivial feature of our result (see the proof of Lemma \ref{Lem_AuxL1}).  
\end{rem}
% We also obtain the following corollary, which improves upon results in \cite[Section 3.3.1]{kim2022}.
% \begin{cor}
%     \label{Cor_subquad}
%     Let $V: \R \rightarrow [0,\infty)$ be an Orlicz function which is either subquadratic or superquadratic, i.e., we have 
%     \[
%     \lim_{x \rightarrow \infty} \frac{V(x)}{x^2} = \infty \qquad \text{or} \qquad \lim_{x \rightarrow \infty} \frac{V(x)}{x^2} = 0.
%     \]
%     Then $\left( \frac{||X^{(n,V)}||_2}{\sqrt{n}} \right)_{n \in \N}$ satisfies a large deviation principle with some GRF $J_X: \R \rightarrow [0, \infty)$.
% \end{cor}
% Corollary \ref{Cor_subquad} is of interest since 

As in case of $\ell_p$-balls, we see that the "$W$-norm" of a uniform distribution on a $V$-Orlicz ball $\mathbb{B}^{(n,V)}_R$ concentrates around a critical value $m \in (0, \infty)$. The following theorem shows that this concentration happens almost surely and, under a suitable normalization, we obtain a Berry--Esseen-type result, i.e., a quantitative central limit theorem. We denote the Kolmogoroff distance by $d_{\text{Kol}}$ (see Equation \eqref{Eq_Kol_dist} for a formal definition). 

\begin{thmalpha}(Strong law of large numbers and CLT)
\label{Thm_SLLN_CLT}
Let $V,W: \R \rightarrow [0, \infty)$ be linearly independent Orlicz functions. For $n \in \N$ and $R \in (0, \infty)$, let $\UOrl \sim \UOrlR  $, then the limit
\begin{equation}
\label{Eq_SLLN}
    m= \lim_{n \rightarrow \infty} \frac{1}{n} \sum_{i=1}^n W \left (X_i^{(n,V)} \right)
\end{equation}
exists almost surely with $m = \mathbb{E}_{\mu_{\alpha^{*}, 0}} [W(X)]$, where $\alpha^{*} \in (0, \infty)$ is chosen such that $\mathbb{E}_{\mu_{\alpha^{*}, 0}} [V(X)] = R$. Moreover, we can even give a quantitative version of the limit above: there exists a constant $c \in (0, \infty)$ such that
\begin{equation}
\label{Eq_CLT}
      d_{\text{Kol}} \left(  \frac{1}{ \sqrt{n}} \sum_{i=1}^n \left[ W \left(X_i^{(n,V)} \right) -m \right] , Z \right) \leq \frac{c}{\sqrt{n}}, \qquad n \in \N,
\end{equation}
where $Z \sim \mathcal{N}(0, \sigma_{*}^2)$ and $\sigma_{*}^2 \in (0, \infty)$ is a different quantity as in Theorem \ref{ThmSLD_thinshell}. 
\par{}
Moreover, the result remains valid if we choose $V(x)= |x|^p$ and $W(x)=|x|^q$ for some $p,q \in (0,1)$ with $p \neq q$.
\end{thmalpha}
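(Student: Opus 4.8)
\textit{Proof strategy.} The plan is to combine an exponential change of measure with a two-dimensional Edgeworth expansion, reusing the machinery developed for Theorem~\ref{ThmSLD}. Abbreviate $S_n:=\tfrac1n\sum_{i=1}^n V\bigl(X_i^{(n,V)}\bigr)$ and $T_n:=\tfrac1n\sum_{i=1}^n W\bigl(X_i^{(n,V)}\bigr)$, let $\alpha^{*}$ be the parameter from the statement, i.e.\ the unique one with $\mathbb{E}_{\mu_{\alpha^{*},0}}[V(Y)]=R$ for $Y\sim\mu_{\alpha^{*},0}$ (unique because $\alpha\mapsto\partial_\alpha\varphi(\alpha,0)=\mathbb{E}_{\mu_{\alpha,0}}[V]$ is strictly increasing with range $(0,\infty)$), put $m:=\mathbb{E}_{\mu_{\alpha^{*},0}}[W(Y)]\in(0,\infty)$ and let $\Sigma$ be the covariance matrix of $\bigl(V(Y),W(Y)\bigr)$ under $\mu_{\alpha^{*},0}$. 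Linear independence of $V$ and $W$, together with $V(0)=W(0)=0$ and the absolute continuity of $\mu_{\alpha^{*},0}$, force $\Sigma$ to be positive definite and the law of $\bigl(V(Y),W(Y)\bigr)$ to be non-lattice on $\mathbb{R}^2$; both properties are needed for the local limit theorem below.

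The first step is a change-of-measure identity. Let $h_n$ denote the Lebesgue density on $\mathbb{R}^2$ of $\tfrac1n\bigl(\sum_{i\le n}V(Y_i),\sum_{i\le n}W(Y_i)\bigr)$ for an i.i.d.\ sequence $(Y_i)_{i\ge1}$ with law $\mu_{\alpha^{*},0}$. Using $\mathbb{P}[T_n\le t]=\vol_n\bigl(\mathbb{B}_R^{(n,V)}\cap\mathbb{B}_t^{(n,W)}\bigr)/\vol_n\bigl(\mathbb{B}_R^{(n,V)}\bigr)$ and the tilting $dx=Z_{\alpha^{*},0}^{\,n}e^{-\alpha^{*}\sum_i V(x_i)}\,d\mu_{\alpha^{*},0}^{\otimes n}(x)$, the common factor $Z_{\alpha^{*},0}^{\,n}$ cancels in the quotient and leaves
\begin{equation*}
\mathbb{P}\bigl[T_n\le t\bigr]=\frac{\displaystyle\int_0^R\!\!\int_0^{t}e^{-n\alpha^{*}r}\,h_n(r,w)\,dw\,dr}{\displaystyle\int_0^R\!\!\int_0^{\infty}e^{-n\alpha^{*}r}\,h_n(r,w)\,dw\,dr}\,.
\end{equation*}
Since $\mathbb{E}_{\mu_{\alpha^{*},0}}[V(Y)]=R$ sits at the right endpoint of the $r$-range and the weight $e^{-n\alpha^{*}r}$ is increasing in $r$, a Laplace argument confines the $r$-integral to an $O(1/n)$-window at $r=R$ (the uniform measure on $\mathbb{B}_R^{(n,V)}$ concentrating, exponentially fast, on the Orlicz sphere $\{\sum_i V(x_i)=nR\}$), so that the $w$-profile of $\mathbb{P}[T_n\le t]$ is governed by $w\mapsto h_n(R,w)$.

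The decisive ingredient is the Edgeworth expansion proved in Section~\ref{Section:Edgeworth}: uniformly for $(r,w)$ in a fixed neighbourhood of $(R,m)$,
\begin{equation*}
h_n(r,w)=\frac{n}{2\pi\sqrt{\det\Sigma}}\exp\!\Bigl(-\tfrac n2\bigl\langle(r-R,\,w-m),\,\Sigma^{-1}(r-R,\,w-m)\bigr\rangle\Bigr)\Bigl(1+\tfrac1{\sqrt n}\mathcal Q(r,w)+O(\tfrac1n)\Bigr),
\end{equation*}
where $\mathcal Q$ is the first Edgeworth polynomial assembled from the third mixed cumulants of $\bigl(V(Y),W(Y)\bigr)$. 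I expect this to be \emph{the main obstacle}: the summands $\bigl(V(Y_i),W(Y_i)\bigr)$ are supported on a curve in $\mathbb{R}^2$ rather than on an open set, so there is no genuinely two-dimensional density to expand; the regularity of $h_n$ and the uniformity of the expansion have to be wrung out of the $n$-fold convolution---exactly the difficulty flagged in the introduction for Theorem~\ref{ThmSLD}, and the point where the non-lattice property is used. Granting the expansion, substitute it into the identity and carry out the Laplace-localised (hence effectively Gaussian) integration in $r$: the residual fluctuation of $r$ is of order $1/n$, negligible against $w-m$ of order $n^{-1/2}$, so one is left with the conditional Gaussian in $w$ times a factor $1+n^{-1/2}q(\cdot)+O(1/n)$. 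This produces, for the Lebesgue density of $\sqrt n\,(T_n-m)$,
\begin{equation*}
x\longmapsto\frac1{\sqrt{2\pi}\,\sigma_{*}}\,e^{-x^2/(2\sigma_{*}^2)}\Bigl(1+\tfrac1{\sqrt n}\,q(x)+O(\tfrac1n)\Bigr),\qquad \sigma_{*}^2:=\Sigma_{22}-\Sigma_{12}^2/\Sigma_{11}>0,
\end{equation*}
uniformly on the relevant range of $x$; here $\sigma_{*}^2$ is the Schur complement of $\Sigma_{11}$ in $\Sigma$, i.e.\ the variance of the part of $W(Y)$ orthogonal to $V(Y)$ under $\mu_{\alpha^{*},0}$. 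Integrating in $x$ yields the distribution function of $\mathcal N(0,\sigma_{*}^2)$ with an $O(1/\sqrt n)$ error on the bulk, while on the complement the Bahadur--Rao/Kabluchko--Prochno large-deviation bound $\mathbb{P}[\,|T_n-m|\ge\delta\,]\le e^{-c(\delta)n}$ (cf.\ \cite{BahadurRao1960,KP2021}) shows the remaining mass and the matching Gaussian tail to be both $o(1/\sqrt n)$; combining the two gives the Kolmogorov bound \eqref{Eq_CLT} with $Z\sim\mathcal N(0,\sigma_{*}^2)$.

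Finally, \eqref{Eq_SLLN} falls out as a by-product: the Laplace analysis already shows that the density of $T_n$ concentrates exponentially around $w=m$, so $\sum_{n}\mathbb{P}[\,|T_n-m|>\varepsilon\,]<\infty$ for every $\varepsilon>0$ and, by the Borel--Cantelli lemma, $T_n\to m$ almost surely for any joint realisation of the vectors $X^{(n,V)}$; this simultaneously identifies the almost sure limit as $m=\mathbb{E}_{\mu_{\alpha^{*},0}}[W(Y)]$. For the last assertion, with $V(x)=|x|^p$ and $W(x)=|x|^q$ for $p\neq q$ in $(0,1)$, observe that convexity of $V,W$ was nowhere used: $\{x\in\mathbb{R}^n:\sum_i|x_i|^p\le nR\}$ is still a bounded star body of finite volume, $\mu_{\alpha^{*},0}$ still has Lebesgue density proportional to $e^{\alpha^{*}|x|^p}$ (integrable, with all the moments invoked above), and $|x|^p,|x|^q$ are linearly independent, so the change of measure, the Edgeworth expansion and the deviation estimate all apply verbatim.
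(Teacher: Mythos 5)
Your proposal shares the paper's high-level strategy — exponential tilting by $e^{\alpha^{*}\sum_i V(x_i)}$ against the Gibbs measure $\mu_{\alpha^{*},0}$, followed by an Edgeworth expansion for the resulting i.i.d.\ vectors $(V(Y_i),W(Y_i))$ — but it hinges on an ingredient the paper does not establish. You invoke a \emph{local} Edgeworth expansion, i.e.\ a uniform density asymptotic for the Lebesgue density $h_n$ of the empirical mean $\tfrac1n\sum_i\bigl(V(Y_i),W(Y_i)\bigr)$. That is strictly stronger than what Section~\ref{Section:Edgeworth} proves: Lemma~\ref{Lem_AuxL3} gives an Edgeworth expansion for the \emph{distribution function} $F_n$ of the rescaled sums. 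Under Cramér's condition on the characteristic function — which is precisely what the Van der Corput argument (Propositions~\ref{Prop_vanderCorput}--\ref{Prop_Main_Tool}) delivers — one obtains expansions for probabilities of Borel sets, not for densities; a density-level expansion would additionally require, e.g., $L^p$-integrability of the characteristic function of $(V(Y_1),W(Y_1))$, which is neither verified nor obviously true for general Orlicz functions. You do flag this as ``the main obstacle'' (the law of a single summand is supported on a curve, so a genuine two-dimensional density of one term does not exist), but you then say ``Granting the expansion'' and proceed; the grant is exactly the missing step, and the paper's proof is structured to avoid needing it.

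Concretely, the paper never writes $h_n$ at all. After tilting, it expresses both $\vol_n\bigl(\mathbb{B}_R^{(n,V)}\bigr)$ (Lemma~\ref{Lem_Vol_OB}) and the numerator as Stieltjes integrals $\int e^{\sqrt n\,\alpha_{*}x}\,dF_n(x,y)$, and then Lemma~\ref{Lem_Asympt_Int} replaces $dF_n$ by $dG^{\Sigma}$ with an error that is $O(n^{-1})$ \emph{uniformly in the integration cut-off $t$} — this is what actually produces the $O(n^{-1/2})$ Kolmogorov bound, uniformly in $t$, without ever disintegrating in the ``$r$''-variable or conditioning on the boundary $\sum_i V=nR$. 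Your Laplace localisation to an $O(1/n)$-window in $r$ and the ensuing ``conditional Gaussian in $w$'' are heuristics that a careful reader would ask you to justify; the paper's route is the rigorous version of that heuristic. A smaller stylistic difference: for the tail you call on a Bahadur--Rao/Kabluchko--Prochno large-deviation bound, whereas the paper's SLLN proof just pushes the Edgeworth expansion to order $k=3$ (finite third moments are automatic for $\mu_{\alpha^{*},0}$) to get an $o(n^{-3/2})$ bound on $\mathbb{P}[\,|T_n-m|>\varepsilon\,]$ and then uses Borel--Cantelli; that avoids importing external LDP estimates. Your identification of $\sigma_{*}^2$ as the Schur complement $\Sigma_{22}-\Sigma_{12}^2/\Sigma_{11}=1/\rho_{2,2}$ and your treatment of the $V(x)=|x|^p,\ W(x)=|x|^q$ case (convexity not used, only linear independence and finiteness of $Z_{\alpha^{*},0}$) both match the paper. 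In short: right skeleton and correct final formula, but the density-level Edgeworth expansion is a genuine gap, and it is precisely the step the paper circumvents by working at the CDF level.
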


\begin{rem}
It's worth noting that Theorem \ref{Thm_SLLN_CLT} holds true for functions $V(x)=|x|^p$ and $W(x)=|x|^q$, where $p, q \in (0,1)$. Remarkably, these functions are neither Orlicz functions nor convex. We emphasize that a key component of the proof of Theorem \ref{Thm_SLLN_CLT} is the establishment of an Edgeworth expansion for a specific sequence of random vectors (see Section \ref{Section:Edgeworth}). We anticipate that the techniques developed in Section \ref{Section:Edgeworth} could apply to a broader class of functions beyond those considered in Theorem \ref{Thm_SLLN_CLT}, which could be of independent interest.
\end{rem}

As a consequence of our CLT, we also obtain the following Corollary answering a question in \cite[p. 4]{KP2021}.

\begin{cor}
    \label{Cor_Clt}
    Let $V,W: \R \rightarrow [0,\infty)$ be two linearly independent Orlicz functions. Then, for any radius $R \in (0, \infty)$, we have
    \[
    \frac{\vol_{n}\left( \mathbb{B}_{R}^{(n,V)} \cap \mathbb{B}_{m}^{(n,W)}\right)}{\vol_{n} \left(\mathbb{B}_{R}^{(n,V)} \right)}  \stackrel{n \rightarrow \infty}{\longrightarrow} \frac{1}{2},
    \]
    where $m = m(R) \in (0, \infty)$ is the quantity from \eqref{Eq_W_Expectation}.
\end{cor}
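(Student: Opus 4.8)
The plan is to deduce the corollary directly from the quantitative CLT in Theorem \ref{Thm_SLLN_CLT}. First I would rewrite the normalized volume as a probability using the same identity already used in the paper: if $X^{(n,V)} \sim \UOrlR$, then
\begin{equation*}
\frac{\vol_{n}\left( \mathbb{B}_{R}^{(n,V)} \cap \mathbb{B}_{m}^{(n,W)}\right)}{\vol_{n} \left(\mathbb{B}_{R}^{(n,V)} \right)}
= \mathbb{P}\left[ \sum_{i=1}^n W\left( X_i^{(n,V)}\right) \leq n m \right]
= \mathbb{P}\left[ \frac{1}{\sqrt{n}} \sum_{i=1}^n \left( W\left( X_i^{(n,V)}\right) - m \right) \leq 0 \right].
\end{equation*}
Denote by $S_n$ the centered-and-scaled sum on the right-hand side. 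By \eqref{Eq_CLT} we have $d_{\text{Kol}}(S_n, Z) \leq c/\sqrt{n}$ with $Z \sim \mathcal{N}(0, \sigma_{*}^2)$ and $\sigma_{*}^2 \in (0,\infty)$.

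Next I would simply evaluate at the point $0$. Since $\sigma_{*}^2 > 0$, the law of $Z$ is a nondegenerate centered Gaussian, so $\mathbb{P}[Z \leq 0] = \tfrac12$, and $0$ is a continuity point of the distribution function of $Z$. Hence
\begin{equation*}
\left| \mathbb{P}[S_n \leq 0] - \tfrac12 \right| = \left| \mathbb{P}[S_n \leq 0] - \mathbb{P}[Z \leq 0] \right| \leq d_{\text{Kol}}(S_n, Z) \leq \frac{c}{\sqrt n} \xrightarrow{n \to \infty} 0,
\end{equation*}
which is exactly the claimed convergence. (One should double-check the convention for the Kolmogorov distance in \eqref{Eq_Kol_dist}; whether it is defined as a supremum over all $x$ or only over continuity points is irrelevant here since the limiting law is continuous, but if the paper's $S_n$ has atoms one uses that $d_{\text{Kol}}$ controls $|F_{S_n}(0) - F_Z(0)|$ regardless.)

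There is essentially no obstacle: the only points that require a word are (a) that $m$ appearing in the corollary is literally the same $m$ as in Theorem \ref{Thm_SLLN_CLT}, which holds by construction via \eqref{Eq_W_Expectation}/\eqref{Eq_SLLN}, and (b) that $\sigma_{*}^2 \neq 0$, so that the Gaussian limit is genuinely symmetric about $0$ and assigns mass exactly $\tfrac12$ to the half-line — this nondegeneracy is asserted in the statement of Theorem \ref{Thm_SLLN_CLT}. If one wanted an independent (non-quantitative) argument one could instead invoke only the qualitative CLT $S_n \Rightarrow Z$ together with the fact that $0$ is a continuity point of $F_Z$; the quantitative bound is not needed for the corollary, but it comes for free and even yields the rate $\big| \tfrac{\vol_n(\cdots)}{\vol_n(\cdots)} - \tfrac12 \big| = O(n^{-1/2})$.
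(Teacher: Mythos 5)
Your proposal is correct and follows exactly the same route as the paper: rewrite the volume ratio as $\mathbb{P}\bigl[\tfrac{1}{\sqrt{n}}\sum_{i=1}^n (W(X_i^{(n,V)})-m)\le 0\bigr]$ and invoke Theorem \ref{Thm_SLLN_CLT} to conclude convergence to $\tfrac12$. The paper states the conclusion tersely; you spell out the Kolmogorov-distance estimate at the continuity point $0$ and, as a free byproduct, record the $O(n^{-1/2})$ rate, but this is the identical argument.
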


% \todoLF[inline]{Man kann den folgenden Satz für jede andere Orliczfunktionen $W$ formulieren, aber wsl ist die $L_2$-Version am interessantesten. Ich denke unsere Methoden würden es auch hergeben, für ein kleines $\varepsilon > 0$, folgende Aussage zu bekommen:
% \begin{equation*}
%  \mathbb{P} \left[ \left|  \frac{|| X^{(n,V)} ||_2^2}{n}  - m \right | > \varepsilon \right] = \frac{1}{\sqrt{2 \pi n} \sigma_1}e^{-n \mathbb{I}(R,m+ \varepsilon)} (1 + o(1))+   \frac{1}{\sqrt{2 \pi n} \sigma_2}e^{-n \mathbb{I}(R,m- \varepsilon)} (1 + o(1)).  
% \end{equation*} }

Another valuable application of our techniques is the following thin-shell concentration result, which complements \cite[Theorem B]{AP2022}.
\begin{thmalpha}(Sharp thin-shell concentration)
\label{ThmSLD_thinshell}
Let $R \in (0, \infty)$, $W(x):= x^2$ and $V: \R \rightarrow [0, \infty)$ be an Orlicz function such that Assumption \ref{Ass_A} is satisfied. Then, there exists an $\varepsilon \in (0, \infty)$ such that for all $\delta \in (0, \varepsilon)$, we have
\begin{equation}
\label{Eq_SharpDev_thinshell}
    \mathbb{P} \left[ \left | \frac{|| X^{(n,V)} ||_2^2}{n}   - m \right | > \delta \right] = \frac{1}{\sqrt{2 \pi n} \tilde{\sigma}}e^{-n \min \{\mathbb{I}(R,m+ \delta ), \mathbb{I}(R,m- \delta ) \}} (1 + o(1))
\end{equation}
where $\UOrl \sim \UOrlR  $ and $\I$ is the GRF from \eqref{Eq_GRF} and $m \in (0,\infty)$ is the almost sure limit from Theorem \ref{Thm_SLLN_CLT} (see also Equation \ref{Eq_SLLN}). The quantity $\tilde{\sigma} \in (0, \infty) $ depends on $V, R$, and $\delta$.
\end{thmalpha}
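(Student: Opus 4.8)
The plan is to derive Theorem \ref{ThmSLD_thinshell} as essentially a two-sided version of Theorem \ref{ThmSLD}. The key observation is that, for $W(x) = x^2$, the event in \eqref{Eq_thinshell} decomposes as
\[
\left\{ \left| \frac{\|X^{(n,V)}\|_2^2}{n} - m \right| > \delta \right\} = \left\{ \frac{\|X^{(n,V)}\|_2^2}{n} > m + \delta \right\} \cup \left\{ \frac{\|X^{(n,V)}\|_2^2}{n} < m - \delta \right\},
\]
a disjoint union. The probability of the first event is exactly $1 - \vol_n(\mathbb{B}_R^{(n,V)} \cap \mathbb{B}_{m+\delta}^{(n,W)})/\vol_n(\mathbb{B}_R^{(n,V)})$, which by Theorem \ref{ThmSLD}$(i)$ equals $\frac{1}{\sqrt{2\pi n}\,\sigma(V,W,R,m+\delta)} e^{-n\I(R,m+\delta)}(1+o(1))$, valid for $\delta$ in a right-neighborhood of $0$ since $W(x)=x^2$ and $V$ satisfy Assumption \ref{Ass_A}. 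So the first step is to record that $\I(R,\cdot)$ is finite and behaves well near $m$, and to invoke Theorem \ref{ThmSLD}$(i)$ for the upper tail with the explicit constant $\sigma$.

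The second step is to obtain the analogous sharp asymptotics for the lower tail $\prb[\frac1n\|X^{(n,V)}\|_2^2 < m - \delta]$. This is not literally covered by Theorem \ref{ThmSLD} as stated (which treats $t > m$), but the same machinery applies: one performs the exponential change of measure using the Gibbs measure $\mu_{\alpha,\beta}$, now tilting so that $\E_{\mu_{\alpha,\beta}}[V(X)] = R$ and $\E_{\mu_{\alpha,\beta}}[W(X)] = m - \delta$, which forces a tilt parameter $\beta$ of the opposite sign compared to the upper-tail case, and then applies the same Edgeworth expansion from Section \ref{Section:Edgeworth} to the conditioned sum. The outcome is $\prb[\frac1n\|X^{(n,V)}\|_2^2 < m-\delta] = \frac{1}{\sqrt{2\pi n}\,\sigma(V,W,R,m-\delta)} e^{-n\I(R,m-\delta)}(1+o(1))$ for small $\delta$. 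I would present this either by stating a one-line remark that the proof of Theorem \ref{ThmSLD} is symmetric in the direction of deviation, or by re-running the argument; the honest thing is that the lower-tail sharp LDP needs the non-degeneracy of the Hessian of $\varphi$ at the relevant tilt, which holds because $V, W$ are linearly independent.

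The third step is to combine the two tails. Write $a_n = \frac{1}{\sqrt{2\pi n}\,\sigma_+} e^{-n\I(R,m+\delta)}(1+o(1))$ and $b_n = \frac{1}{\sqrt{2\pi n}\,\sigma_-} e^{-n\I(R,m-\delta)}(1+o(1))$ for the two probabilities, where $\sigma_\pm = \sigma(V,W,R,m\pm\delta)$. By disjointness the target probability is $a_n + b_n$. Now $\I(R,\cdot)$ has a strict minimum $0$ at the typical value $m$ (since $\E_{\mu_{\alpha^*,0}}[W(X)] = m$ by Theorem \ref{Thm_SLLN_CLT}), and it is strictly convex near $m$ because the Hessian of $\varphi$ is positive definite there; hence for $\delta$ small the two exponents $\I(R,m+\delta)$ and $\I(R,m-\delta)$ are both positive and, generically, unequal. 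Set $M := \min\{\I(R,m+\delta), \I(R,m-\delta)\}$. If the minimum is attained at, say, $m+\delta$ strictly, then $b_n/a_n \to 0$ exponentially, so $a_n + b_n = \frac{1}{\sqrt{2\pi n}\,\sigma_+} e^{-nM}(1+o(1))$ and we read off $\tilde\sigma = \sigma_+$; symmetrically if the minimum is at $m-\delta$. In the non-generic case of a tie $\I(R,m+\delta) = \I(R,m-\delta) = M$, one gets $a_n + b_n = \frac{1}{\sqrt{2\pi n}}\left(\frac{1}{\sigma_+} + \frac{1}{\sigma_-}\right) e^{-nM}(1+o(1))$, i.e. $\tilde\sigma = (\sigma_+^{-1} + \sigma_-^{-1})^{-1}$. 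Either way the stated form \eqref{Eq_SharpDev_thinshell} holds with an explicit $\tilde\sigma$ depending on $V, R, \delta$, and the $o(1)$ absorbs the subexponentially small competing tail.

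The main obstacle is not the combinatorial bookkeeping of Step 3 but Step 2: verifying that the sharp large deviation asymptotics (in particular the Edgeworth expansion of Section \ref{Section:Edgeworth}) go through for the lower tail $t < m$ with the sign-reversed tilt. One must check that the equation $\nabla\varphi(\alpha,\beta) = (R, m-\delta)$ still has a solution in the interior of the domain of $\varphi$ for all small $\delta > 0$ — which is where Assumption \ref{Ass_A} (ensuring $Z_{\alpha,\beta} < \infty$ in the relevant range of parameters, since $W(x) = x^2$ grows no faster than $V$) is used — and that the associated covariance matrix stays non-degenerate and the lattice/Cramér-type smoothness condition needed for the Edgeworth expansion is still met, exactly as in the proof of Theorem \ref{ThmSLD}. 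Once that is in place the rest is a short argument.
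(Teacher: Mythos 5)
Your proposal matches the paper's approach: decompose into the disjoint upper and lower tails, obtain the upper-tail asymptotics via the machinery of Theorem~\ref{ThmSLD}, re-run the tilting and Edgeworth argument for the lower tail with a tilt $(\alpha,\beta)\in(-\infty,0)^2$ (the paper isolates the required solvability of $\nabla\varphi(\alpha,\beta)=(R,m-\delta)$ for small $\delta$ as Lemma~\ref{Lem_Aux4}), and combine by retaining only the dominant exponential. Your determination of $\tilde\sigma$ as the $\sigma$ of the dominating tail, with the harmonic combination $\sigma_+\sigma_-/(\sigma_++\sigma_-)$ on a tie, is logically correct; note that the paper's displayed case distinction for $\tilde\sigma$ has the two strict-inequality branches interchanged relative to the consistent convention, which your derivation reproduces correctly.
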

\begin{rem}
 In fact, we will prove a more general result as it is stated in Theorem \ref{ThmSLD_thinshell}, where the $2$-norm is replaced by the "$W$-norm" with an Orlicz function $W$ satisfying Assumption \ref{Ass_A}.
 
 In particular, we note that Assumption \ref{Ass_A} is weaker than the superquadratic assumption required in \cite[Theorem B]{AP2022}.
\end{rem}
\section{Notation and Preliminaries}\label{sec:notation and prelim}
% % % % % % % % % % % % % % % % % % % % % % % % % % % % % % % % % % % % 

% % % % % % % % % % %
\subsection{Notation}
% % % % % % % % % % %

Let $X$ be a real-valued random variable and $\mu$ be some distribution. If $X$ is distributed according to $\mu$, we write $X \sim \mu$. We denote the Dirac-measure in some point $x \in \R$ by $\delta_x$. The mean of $X$ is denoted by $\mathbb{E} \left[X \right]$ and for the variance of $X$ we write $\mathbb{V} \left[X \right]$, provided the respective quantities exist. Let $f : \R \rightarrow \R$ be some measurable function and $\mu$ some distribution; whenever we write 
\begin{equation*}
 \mathbb{E}_{\mu}\left[ f(X) \right]   \quad \text{or} \quad \mathbb{V}_{\mu}\left[ f(X) \right], 
\end{equation*}
we indicate that $X \sim \mu$. Let $k \in \N$ and $\Sigma \in \R^{k \times k}$ be a positive definite, symmetric matrix and $v \in \R^k$ be some vector. The (multivariate) normal distribution with mean $v$ and covariance matrix $\Sigma$ is then denoted by $ \mathcal{N} \left( \mu , \Sigma \right)$. Throughout this article we will only see the cases $k=1$ or $k=2$. Let $ X$ and $Y$ be random variables. The \textit{Kolmogoroff distance} between $X$ and $Y$ is defined as
\begin{equation}
\label{Eq_Kol_dist}
    d_{\text{Kol}}(X,Y):= \sup_{t \in \R} \left |  \mathbb{P} \left[ X \leq t\right] - \mathbb{P} \left[ Y \leq t \right] \right | . 
\end{equation}
Let $I \subseteq \R$ be an interval with endpoints $a,b$. In Section \ref{Section:Edgeworth} we will frequently use the notation $|I| = b-a$ to indicate the length of $I$. Let $A \subseteq \R^d$ be a set, where, depending on the context, we will use both notations $\mathbb{1}_A(x)$ and $\mathbb{1}_{\left[ x \in A \right]}$ for the indicator function over the set $A$.
Let $U \subseteq \R^2$ be an open non-empty set and $\varphi: U \rightarrow \R$ be two times continuously differentiable on $U$. We then write $\varphi \in \mathcal{C}^2(U)$ or if $U = \R^2$, we just write $\varphi \in \mathcal{C}^2$. For $(x,y) \in U$, the gradient of $\varphi $ in $(x,y)$ is defined as
\begin{equation*}
    \nabla \varphi (x,y):= \left( \begin{array}{c}
         \frac{\partial \varphi}{\partial x} (x,y) \\
          \frac{\partial \varphi}{\partial y} (x,y)
    \end{array} \right).
\end{equation*}
The Hessian of $\varphi$ in some point $(x,y) \in U$ is denoted by 
\begin{align*}
H_{\varphi}(x,y):=\left( \begin{matrix}
         \frac{\partial^2 \varphi}{\partial x^2} (x,y) &  \frac{\partial^2 \varphi}{\partial x \partial y} (x,y) \\
          \frac{\partial \varphi}{\partial y \partial x } (x,y) & \frac{\partial^2 \varphi}{\partial y^2} (x,y)
    \end{matrix} \right).
\end{align*}

Let $(a_n)_{n \in \N}$ be a sequence of real numbers and $(b_n)_{n \in \N}$ a sequence of monotone decreasing and positive numbers. We write $a_n = o(b_n)$, if, for all $\varepsilon \in (0, \infty)$, there exists an $N \in \N$ such that for all $n \geq N$, it holds that
\begin{equation*}
    |a_n| \leq \varepsilon b_n.
\end{equation*}
We write $a_n = O(b_n)$, if there exists a constant $C \in (0, \infty)$ such that, for all $n \in \N $
\begin{equation*}
    |a_n| \leq C b_n.
\end{equation*}
If all $a_n$ are non-negative, we also use the Vinogradov notation $a_n \ll b_n$ if $a_n = O(b_n)$. Let $(X_n)_{n \in \N}$ be a sequence of real-valued random variables. 

\subsection{Orlicz functions \& Gibbs measures}
% % % % % % % % % % % % % % % % % % % % % % % % % 

\label{sec:Orlicz_Gibbs}

\begin{df}
\label{Definition_Orlicz_ball}
A symmetric, convex function $V: \R \rightarrow [0, \infty)$ with $V(x)=0$ iff $x =0$ is called an Orlicz function if $V$ is two times continously differentiable. In fact, we only need that $V$ is $\mathcal{C}^2$ except of countably many points that do not accumulate anywhere. We recall the definition of the associated Orlicz ball $\mathbb{B}^{(n,V)}_R$ of radius $R \in (0, \infty)$ with
\begin{equation*}
\mathbb{B}_R^{(n,V)} = \left \{  x \in \R^n : \sum_{i=1}^n V(x_i) \leq R n \right \}.
\end{equation*}
\end{df}
\begin{rem}
\label{Rem_Def_Orl}
Our notion of an Orlicz function is slightly more restrictive than usual, since we require that $V$ is $\mathcal{C}^2$ except of a discrete set of points. Often (see, e.g., \cite{AP2022, KP2021}) $V$ is assumed to be just convex, which by Alexandrov's theorem, implies that $V$ is $\mathcal{C}^2$ almost everywhere. The set of points where $V$ is not $\mathcal{C}^2$ could still be dense. Here we exclude such examples. We believe that this assumption could be overcome with a significant amount of technical work in Section \ref{Section:Edgeworth}, which we have chosen not to undertake.
\end{rem}

The uniform distribution of the intersection of two Orlicz balls $ \mathbb{B}_R^{(n,V)}$ and  $ \mathbb{B}_R^{(n,W)}$ is naturally related to two-factor Gibbs measures $\mu_{\alpha, \beta}$ defined via the Lebesgue density
\begin{equation}
\label{Eq_Def_GibbsMeas}
\frac{d \mu_{\alpha, \beta}}{dx}(x) = \frac{e^{\alpha V(x)+ \beta W(x)}}{Z_{\alpha, \beta}}, \qquad  x \in \R,    
\end{equation}
where 
\[
Z_{\alpha, \beta} = \int_{\R} e^{\alpha V(x) + \beta W(x)}dx
\]
is the normalizing constant and $(\alpha, \beta) \in \R^2$ are chosen in a way such that $\mu_{\alpha, \beta}$ exists. For instance, if we choose $\alpha=-1$ and $\beta=0$, we always have that $\mu_{\alpha, \beta}$ exists since $V$ is convex as Orlicz function. 
% Let us recall here what an Orlicz function is and introduce Gibbs measures whose potentials are given by Orlicz functions.

% \begin{df}
% 		An Orlicz function $V: \R \rightarrow [0,\infty)$ is a convex, symmetric function with $V(0) = 0$ and $V(x) > 0$ for $ x \neq 0$. We define the associated Orlicz ball 
% 		\begin{equation*}
% 		\mathbb{B}_R^{n,V} := \Big \{ x \in \R^n \ : \ \sum_{k=1}^n V( x_k) \leq R n  \Big \} 
% 		\end{equation*}
% 		for some $ R > 0$ and in case of $R=1$, we write $ \mathbb{B}^{n,V} : =  \mathbb{B}_1^{n,V}$. 
% \end{df}

% For some Orlicz function $V: \R \rightarrow [0,\infty)$ and $ \beta \in (- \infty, 0)$, we define the Gibbs measure $\mu_{V,\beta}$ via the Lebesgue density
% \begin{equation*}
% \frac{d \mu_{V,\beta}}{dx}(x):= \frac{e^{\beta V(x)} }{\int_{\R} e^{\beta V(t)} dt}, \quad x \in \R.
% \end{equation*}

% % % % % % % % % % % % % % % % % % % % % % % % %
%\subsection{Basics from large deviation theory and probability}
% % % % % % % % % % % % % % % % % % % % % % % % %

% % % % % % % % % % % % % % % % % % % % % % % % %
\section{Proofs}
% % % % % % % % % % % % % % % % % % % % % % % % %

% % % % % % % % % % % % % % % % % % % % % % % % %
\subsection{Auxiliary Results}
% % % % % % % % % % % % % % % % % % % % % % % % %

% If $V$ and $W$ are Orlicz functions with $\lim_{x\rightarrow \infty} \frac{V(x)}{W(x)} = \infty$, $ \frac{d \mu_{\alpha, \beta}(x)}{d  x}$ is well-defined for any pair $(\alpha, \beta) \in (- \infty,0) \times \R $. 
% In the following, let $V$ and $W$ be Orlicz functions with $ \lim_{x\rightarrow \infty} \frac{V(x)}{W(x)} = \infty$ and we define $ \varphi(\alpha, \beta) = \log Z_{\alpha, \beta}$.

In this Section we work with Orlicz functions $V,W : \R \rightarrow [0, \infty)$ and Gibbs measures $\mu_{\alpha, \beta}$ with distribution given in \eqref{Eq_Def_GibbsMeas}, where $(\alpha, \beta) \in \R^2$ are chosen such that $\mu_{\alpha, \beta}$ exists. We recall that $Z_{\alpha, \beta} = \int_{\R} e^{\alpha V(x) + \beta W(x)} dx$, $\varphi(\alpha, \beta) =  \log Z(\alpha, \beta)$ and define
\begin{align*}
     \mathcal{D}_{\varphi} := \left \{ (\alpha ,\beta) \in \R^2 : \varphi(\alpha, \beta) < \infty \right \}.
\end{align*}
The following lemma gives (partial) characterisations of the set $\mathcal{D}_{\varphi}$, depending on the relation of $V$ and $W$. 
\begin{lem}
\label{Lem_Domain_phi}
$(i)$ If $V$ and $W$ satisfy Assumption \ref{Ass_A}, then 
\begin{equation*}
\left \{ (\alpha, \beta ) \in (- \infty, 0) \times \R : \beta < | \alpha | \liminf_{x \rightarrow \infty} \frac{V(x)}{W(x)}  \right \} \subseteq \mathcal{D}_{\varphi}.
\end{equation*}
$(ii)$ If $V$ and $W$ satisfy Assumption \ref{Ass_B}, then $\mathcal{D}_{\varphi} = (- \infty, 0) \times \R$.
\end{lem}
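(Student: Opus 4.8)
## Proof proposal for Lemma 2.5 (characterization of $\mathcal{D}_\varphi$)

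The plan is to analyze the integrability of $x \mapsto e^{\alpha V(x) + \beta W(x)}$ at infinity, since the integrand is continuous (both $V,W$ are $\mathcal{C}^2$ away from a discrete set, hence locally bounded) and symmetric, so the only obstruction to $Z_{\alpha,\beta} < \infty$ is the growth of the exponent as $|x| \to \infty$. For part $(i)$, fix $(\alpha, \beta)$ with $\alpha < 0$ and $\beta < |\alpha| \liminf_{x\to\infty} V(x)/W(x) =: |\alpha| \ell$ (where $\ell \in (0,\infty]$ by Assumption \ref{Ass_A}). First I would treat the case $\ell < \infty$: pick $\theta$ with $\beta/|\alpha| < \theta < \ell$; by definition of $\liminf$ there is $x_0$ such that $V(x) \geq \theta W(x)$ for all $x \geq x_0$. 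Then for $x \geq x_0$,
\[
\alpha V(x) + \beta W(x) = -|\alpha| V(x) + \beta W(x) \leq -|\alpha|\theta W(x) + \beta W(x) = -(|\alpha|\theta - \beta) W(x),
\]
and $|\alpha|\theta - \beta > 0$. Since $W$ is an Orlicz function it is convex with $W(0)=0$, hence $W(x) \geq c\, x$ for $x$ large (with $c = W(x_1)/x_1 > 0$ for any fixed $x_1$), so the integrand decays at least exponentially and $\int_{x_0}^\infty e^{\alpha V + \beta W}\,dx < \infty$; combined with local boundedness and symmetry this gives $\varphi(\alpha,\beta) < \infty$. If $\beta \leq 0$ the bound $\alpha V + \beta W \leq \alpha V \leq -|\alpha| c x$ works directly and covers the case $\ell = \infty$ as well (there one simply uses $\beta < |\alpha|\cdot\infty$ is automatic, so only $\beta \leq 0$ needs the trivial argument and $\beta > 0$ is handled by choosing any finite $\theta > \beta/|\alpha|$ and using that $V(x)/W(x) \to \infty$ eventually exceeds $\theta$).

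For part $(ii)$, the inclusion $(-\infty,0)\times\R \subseteq \mathcal{D}_\varphi$ follows from part $(i)$: Assumption \ref{Ass_B} gives $\liminf_{x\to\infty} V(x)/W(x) = \infty$, so the condition $\beta < |\alpha|\cdot\infty$ is vacuous and every $(\alpha,\beta)$ with $\alpha<0$ is admissible. For the reverse inclusion I need to show $\varphi(\alpha,\beta) = \infty$ whenever $\alpha \geq 0$. If $\alpha > 0$ this is immediate since $V(x) \to \infty$ forces $e^{\alpha V(x)} \to \infty$, so the integrand does not even go to zero. If $\alpha = 0$, then $Z_{0,\beta} = \int_\R e^{\beta W(x)}\,dx$; for $\beta \geq 0$ this diverges trivially ($W \geq 0$), and for $\beta < 0$ — here is the one genuinely non-vacuous point — I claim it still diverges because $W$ grows strictly slower than $V$, but more to the point $W$ could grow arbitrarily slowly: however, $W$ being an Orlicz function that is $\mathcal{C}^2$ with $W(x)=0$ iff $x=0$, $W$ could actually grow linearly or faster and then $e^{\beta W}$ would be integrable. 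Wait — this means the claim $\mathcal{D}_\varphi = (-\infty,0)\times\R$ in $(ii)$ must implicitly be read as an equality up to the relevant boundary, or Assumption \ref{Ass_B} must be used more carefully. Re-examining: with $\alpha = 0$ and $\beta < 0$, if $W(x) = |x|^q$ with $q \geq 1$ then $Z_{0,\beta} < \infty$, contradicting $\mathcal{D}_\varphi \subseteq (-\infty,0)\times\R$. So the honest statement is that the integrand fails integrability exactly when $\alpha > 0$ or ($\alpha = 0$ and $\beta$ too large), and I would state the proof of $(ii)$ as: $(-\infty,0)\times\R\subseteq\mathcal{D}_\varphi$ by $(i)$, and $\mathcal{D}_\varphi \cap ([0,\infty)\times\R)$ contributes nothing new to the interior / is negligible for the applications — i.e. I would prove $\mathcal{D}_\varphi \cap \{\alpha > 0\} = \emptyset$ and note that the $\alpha = 0$ slice is a boundary set that does not affect $\varphi$'s behaviour on $(-\infty,0)\times\R$. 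In writing it up I would follow the paper's evident intent and just establish the two displayed inclusions, proving $(ii)$'s equality by the above $\alpha>0$ argument together with part $(i)$, trusting that the paper's Orlicz functions are normalized so that the $\alpha=0$ edge behaves as claimed (or simply stating the equality as holding on the open half-plane).

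The main obstacle is the subtlety at the boundary $\alpha = 0$ just discussed, together with making the "local boundedness near the exceptional points" argument airtight: since $V,W$ are only $\mathcal{C}^2$ off a discrete set, one must check the integrand is still locally integrable, which is fine because a convex function on $\R$ is continuous everywhere, so both $V$ and $W$ are continuous and the integrand is a genuine continuous function — no real difficulty, just something to remark. The quantitative heart is entirely in part $(i)$: extracting, from a $\liminf$ hypothesis, a pointwise comparison $V(x) \geq \theta W(x)$ valid for all large $x$, and then using convexity of $W$ to get at least linear growth so that the resulting negative exponential is integrable. I expect the write-up to be short, with the $\liminf \Rightarrow$ eventual-inequality step and the convexity-gives-linear-lower-bound step being the two things worth spelling out.
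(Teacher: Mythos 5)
Your proof of part $(i)$ is correct and follows essentially the same route as the paper: extract from the $\liminf$ hypothesis an eventual pointwise bound $V(x)\geq \theta W(x)$ for $x\geq x_0$ (the paper writes this with $\theta = \bar x - \delta$), and then absorb the $\beta W$ term into $\alpha V$ to obtain a negative linear exponent via convexity of an Orlicz function at $0$. Your choice to express the final bound through the growth of $W$ rather than $V$ is cosmetically different but mathematically the same.

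Your hesitation about part $(ii)$ is well placed, and in fact you have caught a genuine imprecision in the lemma as stated. The literal set equality $\mathcal{D}_\varphi = (-\infty,0)\times\R$ cannot hold in general: for $\alpha=0$ and $\beta<0$, the integral $Z_{0,\beta}=\int_\R e^{\beta W(x)}\,dx$ is finite whenever $W$ grows at least linearly, which is automatic for any Orlicz function by convexity and $W(0)=0$. So $(0,\beta)\in\mathcal{D}_\varphi$ for all $\beta<0$ and the inclusion $\mathcal{D}_\varphi\subseteq(-\infty,0)\times\R$ is false. The paper's own proof of $(ii)$ consists of the single sentence that it ``works with an analogous argument,'' which can only establish the inclusion $(-\infty,0)\times\R\subseteq\mathcal{D}_\varphi$ — and indeed that is the only direction used downstream (in the proof of Lemma \ref{Lem_AuxL2}$(ii)$, where it supplies existence of $\beta(\alpha)$ for all $\alpha\in(-\infty,\alpha^*)$). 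So the substance of what you prove matches what the paper actually needs and actually proves; your instinct to prove only the one inclusion and flag the boundary slice $\{\alpha=0\}$ as a statement-level issue is the right call, not a gap in your argument.
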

\begin{proof}
We start with $(i)$, where we assume that $\bar{x} := \liminf_{x \rightarrow \infty} \frac{V(x)}{W(x)} \in (0, \infty)$, since the case where $\bar{x} = \infty $ is treated in $(ii)$. Let $\alpha \in (- \infty, 0)$ be fixed and take some $\beta \in \R$ with $\beta < | \alpha | \liminf_{x \rightarrow \infty} \frac{V(x)}{W(x)}$. We fix $\delta \in (0, \infty)$ and take $x_0 \in (0, \infty)$ such that for all $x \in [x_0, \infty)$, we have $\frac{V(x)}{W(x)} \geq \bar{x} - \delta$. We then get
\begin{align*}
    \int_{x_0}^{\infty}e^{\alpha V(x) + \beta W(x)} dx & = \int_{x_0}^{\infty} \exp \left (\alpha V(x) \left( 1 + \frac{\beta W(x)}{\alpha V(x)} \right ) \right ) dx \\
    & \leq \int_{x_0}^{\infty} \exp \left( \alpha V(x) \left( 1 + \frac{\beta }{\alpha ( \bar{x} - \delta )} \right ) \right) dx \\
    & \leq \int_{x_0}^{\infty} \exp \left( \alpha V(x) \left( 1 - \frac{\bar{x} }{  \bar{x} - \delta } \right ) \right) dx < \infty,
\end{align*}
where the last inequality holds since $ 1 - \frac{\bar{x} }{  \bar{x} - \delta } > 0$, for fixed $\delta $. Thus, as claimed $(\alpha, \beta) \in \mathcal{D}_{\varphi}$. The proof of $(ii)$ works with an analogous argument.
\end{proof}

\begin{lem}
\label{Lem_AuxL1}
Let $V, W : \R \rightarrow [0, \infty)$ be two Orlicz functions. Then the mappings
\begin{equation*}
    \beta \mapsto \mathbb{E}_{\mu_{\alpha, \beta}}\left [ V(X) \right] \quad \text{and} \quad  \alpha \mapsto \mathbb{E}_{\mu_{\alpha, \beta}}\left [ V(X) \right], \quad (\alpha, \beta) \in \mathcal{D}_{\varphi},
\end{equation*}
are well-defined and strictly monotone increasing.
\end{lem}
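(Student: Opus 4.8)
The plan is to recognize these as derivatives of the log-partition function (or of a suitably translated log-partition function) and to show they are strictly increasing by identifying the derivative as a variance, which is automatically positive unless $V$ (or $W$) is degenerate. First I would fix $(\alpha,\beta) \in \mathcal{D}_\varphi$ and, using Lemma \ref{Lem_Domain_phi}, observe that a whole open neighbourhood of $(\alpha,\beta)$ lies in $\mathcal{D}_\varphi$; on such a neighbourhood standard results on exponential families (differentiation under the integral sign, justified by local domination coming from the fact that $e^{\alpha' V + \beta' W}$ has a uniform integrable envelope on a slightly smaller box, exactly as in the proof of Lemma \ref{Lem_Domain_phi}) give that $\varphi \in \mathcal{C}^\infty$ there and that all moments $\E_{\mu_{\alpha,\beta}}[V(X)^j W(X)^k]$ are finite. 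In particular $\E_{\mu_{\alpha,\beta}}[V(X)]$ is well-defined and finite for every $(\alpha,\beta) \in \mathcal{D}_\varphi$.

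Next I would compute the derivatives. Writing $g(\beta) := \E_{\mu_{\alpha,\beta}}[V(X)]$, a direct differentiation of $g(\beta) = Z_{\alpha,\beta}^{-1}\int_\R V(x) e^{\alpha V(x)+\beta W(x)}\,dx$ gives
\begin{equation*}
\frac{d}{d\beta}\, \E_{\mu_{\alpha,\beta}}[V(X)] = \E_{\mu_{\alpha,\beta}}[V(X)W(X)] - \E_{\mu_{\alpha,\beta}}[V(X)]\,\E_{\mu_{\alpha,\beta}}[W(X)] = \Cov_{\mu_{\alpha,\beta}}\big(V(X),W(X)\big),
\end{equation*}
and similarly $\frac{d}{d\alpha}\E_{\mu_{\alpha,\beta}}[V(X)] = \var_{\mu_{\alpha,\beta}}(V(X))$. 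The second expression is manifestly positive provided $V(X)$ is not a.s.\ constant under $\mu_{\alpha,\beta}$, which holds because $V$ is an Orlicz function (strictly increasing on $[0,\infty)$ with $V(0)=0$) and $\mu_{\alpha,\beta}$ has a density positive on all of $\R$; hence $\alpha \mapsto \E_{\mu_{\alpha,\beta}}[V(X)]$ is strictly increasing. For the $\beta$-derivative I would argue that $\Cov_{\mu_{\alpha,\beta}}(V(X),W(X)) > 0$: both $x \mapsto V(x)$ and $x \mapsto W(x)$ are even and strictly increasing in $|x|$, hence are non-decreasing functions of the single random variable $V(X)$ (indeed $W(x) = (W\circ V^{-1})(V(x))$ for $x\ge 0$, with $W\circ V^{-1}$ strictly increasing, and both are even), so by the covariance (Chebyshev/FKG-type) inequality for monotone functions of one random variable their covariance is $\ge 0$, and it is strictly positive because neither is degenerate. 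This is the step I expect to require the most care: one must phrase the monotonicity correctly through the symmetrization — $V(X)$ and $W(X)$ are both increasing functions of $|X|$, equivalently of $V(X)$ — so that the classical "association of monotone functions of a single variable" inequality applies and yields strict positivity using that $\mu_{\alpha,\beta}$ charges every neighbourhood of every point.

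Finally I would note the conclusion is local-to-global: strict monotonicity of each one-variable restriction follows from the sign of the derivative on the (connected) sections of $\mathcal{D}_\varphi$ — by Lemma \ref{Lem_Domain_phi}(i) or (ii) the relevant horizontal and vertical slices of $\mathcal{D}_\varphi$ are intervals, so a positive derivative throughout gives strict monotonicity on the whole slice. The only genuine obstacle is the strict positivity of the covariance $\Cov_{\mu_{\alpha,\beta}}(V(X),W(X))$; everything else (finiteness of moments, differentiation under the integral, $\var(V(X))>0$) is routine given the standing assumptions on Orlicz functions and the description of $\mathcal{D}_\varphi$ from Lemma \ref{Lem_Domain_phi}.
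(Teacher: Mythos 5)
Your proof is correct, and it is genuinely different from the one in the paper. The paper argues directly, without ever forming a derivative: for $\beta_1<\beta_2$ it locates the unique crossing point $\bar x$ of the two densities $\tfrac{d\mu_{\alpha,\beta_1}}{dx}$ and $\tfrac{d\mu_{\alpha,\beta_2}}{dx}$, observes that the first dominates the second on $(0,\bar x)$ and the second dominates the first on $(\bar x,\infty)$, and then compares $\E[V]$ via the tail formula $\E[V(X)]=\int_0^\infty \prb[V(X)\ge s]\,ds$ split at $V(\bar x)$ — in effect a single-crossing/likelihood-ratio stochastic dominance argument. You instead identify $\partial_\alpha \E_{\mu_{\alpha,\beta}}[V(X)]=\var_{\mu_{\alpha,\beta}}(V(X))$ and $\partial_\beta \E_{\mu_{\alpha,\beta}}[V(X)]=\Cov_{\mu_{\alpha,\beta}}(V(X),W(X))$ as entries of $H_\varphi$, and then invoke the Chebyshev/FKG association inequality for monotone functions of the single variable $|X|$ to get strict positivity of the covariance (strictness follows from the usual two-point representation $\Cov(f(Y),g(Y))=\tfrac12\E[(f(Y)-f(Y'))(g(Y)-g(Y'))]$ together with $V,W$ being strictly increasing on $[0,\infty)$, which does hold for an Orlicz function by convexity and $V(0)=0$, and nondegeneracy of $|X|$). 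Your route is more uniform — both the $\alpha$- and $\beta$-monotonicities fall out of the Hessian with a single auxiliary inequality — and it dovetails nicely with the positive-definiteness of $H_\varphi$ that the paper uses elsewhere (Lemma \ref{Lem_AuxL2}); the paper's route is more self-contained, since it re-derives the needed correlation inequality by hand via the single-crossing comparison. One small imprecision on your side: the intervals over which you integrate the positive derivative are intervals because $\mathcal{D}_\varphi$ is convex (a standard consequence of the convexity of $\varphi$), not literally by Lemma \ref{Lem_Domain_phi}, which only gives a partial inner description of $\mathcal{D}_\varphi$; this does not affect the conclusion.
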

\begin{proof}
Let $(\alpha, \beta ) \in \mathcal{D}_{\varphi}$, then 
\begin{equation*}
     \frac{\partial }{\partial \alpha } \varphi( \alpha ,\beta ) =  \mathbb{E}_{\mu_{\alpha, \beta}}\left [ V(X) \right]
\end{equation*}
exists (as a derivative of a parameterintegral). We prove in the following that, for fixed $\alpha \in (- \infty, 0)$, $\beta \mapsto \mathbb{E}_{\mu_{\alpha, \beta}}\left [ V(X) \right]$ is strictly increasing. To that end, let $\beta_1, \beta_2 \in \R$ such that $(\alpha, \beta_i) \in \mathcal{D}_{\varphi}$ for $i=1,2$ and let $\beta_1 < \beta_2$. Since $V$ is an Orlicz function (hence $V$ takes only non-negative values) it follows that $Z_{\alpha, \beta_1} < Z_{\alpha, \beta_2}$. Hence, there exists a unique point $ \bar{x} \in (0, \infty) $ such that 
\begin{equation*}
    \frac{d \mu_{\alpha, \beta_1}}{dx} ( \bar{x}) = \frac{e^{\alpha V( \bar{x}) + \beta_1 W(\bar{x}) }}{Z_{\alpha, \beta_1}} = \frac{e^{\alpha V( \bar{x}) + \beta_2 W(\bar{x}) }}{Z_{\alpha, \beta_2}} = \frac{d \mu_{\alpha, \beta_2}}{dx} ( \bar{x}).
\end{equation*}
This implies that 
\begin{align*}
    \frac{e^{\alpha V( x) + \beta_1 W(x) }}{Z_{\alpha, \beta_1}} & > \frac{e^{\alpha V( x) + \beta_2 W(x) }}{Z_{\alpha, \beta_2}} \quad \text{for all } x \in (0, \bar{x}) \\
     \frac{e^{\alpha V( x) + \beta_1 W(x) }}{Z_{\alpha, \beta_1}} & < \frac{e^{\alpha V( x) + \beta_2 W(x) }}{Z_{\alpha, \beta_2}} \quad \text{for all } x \in ( \bar{x}, \infty).
\end{align*}

From that we infer
\begin{align*}
    \mathbb{E}_{\mu_{\alpha, \beta_1}} \left[ V(X) \right]
    & = \int_{0}^{\infty} \mathbb{P}_{\alpha, \beta_1} \left[ V(X) \geq x \right] dx \\
    & = 2 \int_{0}^{\infty} \int_{ V^{-1}(x)}^{\infty} \frac{e^{\alpha V(t) + \beta_1 W(t)}}{Z_{\alpha, \beta_1}} dt dx \\
    &= 2 \int_{0}^{V(\bar{x})} \int_{ V^{-1}(x)}^{\infty} \frac{e^{\alpha V(t) + \beta_1 W(t)}}{Z_{\alpha, \beta_1}} dt dx  + 2 \int_{V(\bar{x})}^{\infty} \int_{V^{-1}(x)}^{\infty} \frac{e^{\alpha V(t) + \beta_1 W(t)}}{Z_{\alpha, \beta_1}} dt dx \\
    & < 2 \int_{0}^{V(\bar{x})} \left [ \frac{1}{2} -  \int_{0}^{ V^{-1}(x)}  \frac{e^{\alpha V(t) + \beta_1 W(t)}}{Z_{\alpha, \beta_1}} dt \right ] dx  + 2 \int_{V(\bar{x})}^{\infty} \int_{V^{-1}(x)}^{\infty} \frac{e^{\alpha V(t) + \beta_2 W(t)}}{Z_{\alpha, \beta_2}} dt dx \\
    & < 2 \int_{0}^{V(\bar{x})} \left [ \frac{1}{2} -  \int_{0}^{ V^{-1}(x)} \frac{e^{\alpha V(t) + \beta_2 W(t)}}{Z_{\alpha, \beta_2}} dt \right ] dx  + 2 \int_{V(\bar{x})}^{\infty} \int_{V^{-1}(x)}^{\infty} \frac{e^{\alpha V(t) + \beta_2 W(t)}}{Z_{\alpha, \beta_2}} dt dx \\
    & = \mathbb{E}_{\mu_{\alpha, \beta_2}} \left [ V(X) \right].
\end{align*}
An analogous argument shows that $ \alpha \mapsto \mathbb{E}_{\mu_{\alpha, \beta}}\left[ V(X) \right]$ is strictly monotone increasing.
\begin{figure}[h!]
\centering
    %\label{fig:enter-label}
\begin{tikzpicture}
\begin{axis}[
clip=false,
    %xlabel = \(x\),
    %ylabel = \(y\),
    domain = 0:4, 
    samples = 100, 
    axis lines = left,
    width = 10cm,
    height = 6cm,
    ytick=\empty,
   xtick={1.76647}, 
   xticklabels={\(\bar{x}\)}
]
\addplot[red, thick, domain=0:4] {1.5*exp(-0.3*x^3 + x)};
\addplot[blue, thick, domain=0:4] {exp(-0.4*x^2 + x )};  
\legend{\(\frac{d \mu_{\alpha, \beta_1}}{dx}\), \( \frac{d \mu_{\alpha, \beta_2}}{dx}\)}

\end{axis}
\draw[dashed] (2.103*1.76647,0) -- (2.103*1.76647,1.5*1.67917);
\end{tikzpicture}
\caption{The picture suggests that $\mathbb{E}_{\mu_{\alpha, \beta_1}} [V(X)] < \mathbb{E}_{\mu_{\alpha, \beta_2}} [V(X)]$.}    
\end{figure}
\end{proof}

% \begin{lem}
% \label{Lem_AuxL2a}
% Let $R \in (0, \infty)$ and $V,W: \R \rightarrow [0, \infty)$ be two Orlicz functions that satisfy Assumption \ref{Ass_A}
% \end{lem}

\begin{lem}
    \label{Lem_AuxL2}
    Let $R \in (0, \infty)$ and $V,W : \R \rightarrow [0, \infty)$ be two linearly independent Orlicz functions.
    \par{}
    $(i)$ If $V$ and $W$ satisfy Assumption \ref{Ass_A}, then there exists an $\varepsilon \in (0, \infty )$ such that for all $t \in (m, m + \varepsilon)$, we find $(\alpha, \beta) \in (- \infty, 0) \times (0, \infty)$ such that
\begin{equation}
\label{Eq_Gradient}
    \nabla \varphi( \alpha, \beta )  = \left(
\begin{array}{c}
R \\
t \\
\end{array}
\right).
\end{equation}
$(ii)$ If $V$ and $W$ satisfy Assumption \ref{Ass_B}, then, for all $t \in (m, W(V^{-1}(R)))$, we find $(\alpha, \beta) \in (- \infty, 0) \times (0, \infty)$ such that \eqref{Eq_Gradient} holds.
\end{lem}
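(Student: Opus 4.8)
The plan is to start from the base point $(\alpha^{*},0)$ — where, by the strong law of large numbers in Theorem \ref{Thm_SLLN_CLT}, $\alpha^{*}\in(-\infty,0)$ is the unique number with $\mathbb{E}_{\mu_{\alpha^{*},0}}[V(X)]=R$ (existence and uniqueness following from Lemma \ref{Lem_AuxL1} together with the fact that $\alpha\mapsto\mathbb{E}_{\mu_{\alpha,0}}[V(X)]$ sweeps out all of $(0,\infty)$) and $\mathbb{E}_{\mu_{\alpha^{*},0}}[W(X)]=m$, so that $\nabla\varphi(\alpha^{*},0)=(R,m)$ — and to follow the solution curve of the first-coordinate equation $\partial_{\alpha}\varphi(\alpha,\beta)=R$ as $\beta$ increases from $0$. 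Throughout, on the interior of $\mathcal{D}_{\varphi}$ the map $\varphi$ is smooth with $\nabla\varphi(\alpha,\beta)=\big(\mathbb{E}_{\mu_{\alpha,\beta}}[V(X)],\,\mathbb{E}_{\mu_{\alpha,\beta}}[W(X)]\big)$ and Hessian $H_{\varphi}(\alpha,\beta)=\Cov_{\mu_{\alpha,\beta}}\big(V(X),W(X)\big)$; since $\mu_{\alpha,\beta}$ has full support and $V,W$ are linearly independent with $V(0)=W(0)=0$, no nontrivial linear combination of $V(X),W(X)$ is a.s.\ constant, so $H_{\varphi}$ is positive definite, and in particular $\partial_{\alpha\alpha}\varphi=\mathbb{V}_{\mu_{\alpha,\beta}}[V(X)]>0$ and $\det H_{\varphi}>0$.

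First I would invoke the implicit function theorem: since $\partial_{\alpha}\big(\partial_{\alpha}\varphi\big)=\partial_{\alpha\alpha}\varphi>0$, the equation $\partial_{\alpha}\varphi(\alpha,\beta)=R$ has a unique smooth solution $\beta\mapsto\alpha(\beta)$ on a right neighbourhood of $0$ with $\alpha(0)=\alpha^{*}$. Setting $g(\beta):=\mathbb{E}_{\mu_{\alpha(\beta),\beta}}[W(X)]=\partial_{\beta}\varphi(\alpha(\beta),\beta)$ we have $g(0)=m$, and differentiating the defining relation gives $\alpha'(\beta)=-\partial_{\alpha\beta}\varphi/\partial_{\alpha\alpha}\varphi$, hence
\[
g'(\beta)=\partial_{\alpha\beta}\varphi\cdot\alpha'(\beta)+\partial_{\beta\beta}\varphi=\frac{\partial_{\alpha\alpha}\varphi\,\partial_{\beta\beta}\varphi-(\partial_{\alpha\beta}\varphi)^{2}}{\partial_{\alpha\alpha}\varphi}=\frac{\det H_{\varphi}(\alpha(\beta),\beta)}{\partial_{\alpha\alpha}\varphi(\alpha(\beta),\beta)}>0 .
\]
So $g$ is strictly increasing with $g(0)=m$, by continuity $\alpha(\beta)<0$ for $\beta$ in a right neighbourhood of $0$, and by Lemma \ref{Lem_Domain_phi} and the openness of the sets described there, $(\alpha(\beta),\beta)$ lies in the interior of $\mathcal{D}_{\varphi}$ for such $\beta$. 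This already settles part $(i)$: fix $\beta_{0}>0$ in that neighbourhood, put $\varepsilon:=g(\beta_{0})-m\in(0,\infty)$, and for $t\in(m,m+\varepsilon)$ the intermediate value theorem yields $\beta\in(0,\beta_{0})$ with $g(\beta)=t$, so $(\alpha(\beta),\beta)\in(-\infty,0)\times(0,\infty)$ solves \eqref{Eq_Gradient}.

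For part $(ii)$ the idea is to upgrade the local curve to a global one. Under Assumption \ref{Ass_B} we have $\mathcal{D}_{\varphi}=(-\infty,0)\times\R$ by Lemma \ref{Lem_Domain_phi}$(ii)$, and for every fixed $\beta\ge 0$ the map $\alpha\mapsto\mathbb{E}_{\mu_{\alpha,\beta}}[V(X)]$ is continuous and strictly increasing (Lemma \ref{Lem_AuxL1}); a short concentration argument shows it tends to $0$ as $\alpha\to-\infty$ (the measure concentrates at $0$) and to $\infty$ as $\alpha\to0^{-}$ (because $\varphi(\cdot,\beta)$ is finite on $(-\infty,0)$ but $\varphi(\alpha,\beta)\to\infty$ as $\alpha\to 0^{-}$, using $e^{\alpha V+\beta W}\ge e^{\alpha V}$ for $\beta\ge0$). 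Hence $\alpha(\beta)\in(-\infty,0)$ is uniquely defined for all $\beta\ge 0$, depends continuously on $\beta$, and $g(\beta)=\mathbb{E}_{\mu_{\alpha(\beta),\beta}}[W(X)]$ is strictly increasing on $[0,\infty)$ by the computation above with $g(0)=m$; thus $g$ maps $[0,\infty)$ bijectively onto $[m,L)$ with $L:=\lim_{\beta\to\infty}g(\beta)\in(m,\infty]$. It then suffices to show $L\ge W(V^{-1}(R))$, after which the intermediate value theorem finishes the proof exactly as in part $(i)$. Writing $s_{\beta}:=-\alpha(\beta)/\beta>0$, the density of $\mu_{\alpha(\beta),\beta}$ is proportional to $e^{\beta(W(x)-s_{\beta}V(x))}$; under Assumption \ref{Ass_B} the exponent $W(x)-sV(x)$ tends to $-\infty$ as $x\to\infty$ for every $s>0$, so it attains its maximum at some $x_{\beta}$, a Laplace-type analysis shows $\mu_{\alpha(\beta),\beta}$ concentrates around $x_{\beta}$, and the constraint $\mathbb{E}_{\mu_{\alpha(\beta),\beta}}[V(X)]=R$ forces $V(x_{\beta})\to R$, i.e.\ $x_{\beta}\to V^{-1}(R)$, whence $g(\beta)=\mathbb{E}_{\mu_{\alpha(\beta),\beta}}[W(X)]\to W(V^{-1}(R))$.

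The main obstacle will be entirely in part $(ii)$: making the limit $\lim_{\beta\to\infty}g(\beta)=W(V^{-1}(R))$ rigorous. This requires a Laplace-type concentration estimate for the two-parameter family $\mu_{\alpha(\beta),\beta}$ in which \emph{both} parameters diverge, showing $\alpha(\beta)\to-\infty$, controlling the ratio $s_{\beta}=-\alpha(\beta)/\beta$ so that $W-s_{\beta}V$ has a well-behaved interior maximiser, and ruling out mass escaping to infinity — precisely the step where the domination hypothesis $V(x)/W(x)\to\infty$ of Assumption \ref{Ass_B} (and not the weaker Assumption \ref{Ass_A}) is essential, which also explains why only the local statement survives in part $(i)$.
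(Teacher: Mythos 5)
Your part $(i)$ is correct and is essentially the paper's argument, merely reparametrised: the paper fixes $\mathbb{E}_{\mu_{\alpha,\beta(\alpha)}}[V(X)]\equiv R$ with $\alpha$ as the running parameter and lets $\alpha$ decrease from $\alpha^{*}$, showing via the chain rule and positive-definiteness of $H_{\varphi}$ that $\alpha\mapsto\mathbb{E}_{\mu_{\alpha,\beta(\alpha)}}[W(X)]$ is strictly decreasing; you run the same curve with $\beta$ as the parameter and compute $g'=\det H_{\varphi}/\partial_{\alpha\alpha}\varphi>0$. Both arguments then read off $\varepsilon$ from the range of $g$ near the base point, so for $(i)$ there is nothing to add.

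For part $(ii)$ there is a genuine gap, and you have correctly located it: the entire content is showing $L:=\lim_{\beta\to\infty}g(\beta)\ge W(V^{-1}(R))$, which you only sketch via a Laplace-point heuristic. The paper does not argue this way. It instead observes that $(\mu_{\alpha,\beta(\alpha)})_{\alpha<\alpha^{*}}$ is a \emph{tight} family (because $\mathbb{E}[V]\equiv R$ and $V\to\infty$), extracts weak subsequential limits $\mu_{\infty}$, and splits into two cases according to whether $\mathbb{V}_{\mu_{\alpha,\beta(\alpha)}}[V(X)]\to 0$ along some subsequence. In the degenerate case $\mu_{\infty}=\tfrac12\delta_{-V^{-1}(R)}+\tfrac12\delta_{V^{-1}(R)}$ and $L=W(V^{-1}(R))$ falls out directly; in the non-degenerate case the paper first produces a subsequence along which $\frac{\partial}{\partial\alpha}\mathbb{E}_{\mu_{\alpha,\beta(\alpha)}}[W(X)]\to 0$ (else $L=\infty$), then shows the slope $\beta(\alpha_n)/\alpha_n$ stays bounded by a mass-escape argument, passes to a limit $-\lambda$, proves $V\ge\lambda W$ pointwise by a contradiction using the partition function, deduces linear dependence of $V(X),W(X)$ under $\mu_{\infty}$, and finally bounds $\lambda\le R/W(V^{-1}(R))$. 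Your Laplace sketch does touch the same pressure points — you would need exactly the control of $s_{\beta}=-\alpha(\beta)/\beta$ that the paper obtains, plus an interior-maximiser and a uniform Laplace estimate in a regime where both parameters diverge — but none of this is carried out, so as written part $(ii)$ remains a plausible programme rather than a proof. Also note: what the lemma needs is only $L\ge W(V^{-1}(R))$, and both routes yield in fact equality; but the inequality is the nontrivial direction and is precisely what your sketch leaves open.
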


\begin{proof}
We start with a few general observation before we prove $(i)$ and $(ii)$.
It is easy to see that $\varphi : \mathcal{D}_{\varphi} \rightarrow \R$ is two times continuous differentiable and, for $(\alpha, \beta) \in \mathcal{D}_{\varphi}$, the Hessian $H_{\varphi}(\alpha, \beta)$ of $\varphi$ satisfies
\begin{equation*}
H_{\varphi}(\alpha, \beta) = \Cov_{\mu_{\alpha, \beta}} \left[  \left( V(X), W(X) \right) \right].
\end{equation*}
Since $ H_{\varphi}(\alpha, \beta)$ is the covariance matrix of the vector $\left(V(X), W(X) \right)$, where $X \sim \mu_{\alpha, \beta}$ and $V$ and $W$ are linearly independent on $ \supp(X) = \R $, we can deduce that $H_{\varphi}(\alpha, \beta)$ is a positive definite matrix. We define $\alpha^{*} \in (- \infty, 0)$ to be the unique point, where (see, e.g., Section 1.1. in \cite{AP2022}) 
\begin{equation*}    
\mathbb{E}_{\mu_{\alpha^{*},0}}[V(X)] = R.
\end{equation*}
We are now going to prove $(i)$. By Lemma \ref{Lem_Domain_phi}, there is a small neighborhood (in $\R^2$) around $\alpha_*$ which is completely contained in $\mathcal{D}_{\varphi}$. By Lemma \ref{Lem_AuxL1}, the mapping $\beta \mapsto  \mathbb{E}_{\mu_{\alpha^*, \beta}} \left[ V(X) \right]$ is strictly monotone increasing and hence, by continuity, there exists a $\delta_1 \in (0, \infty)$ such that for all $\alpha \in (\alpha^* - \delta_1, \alpha^*)$, we find a $\beta' \in (0, \infty )$ such that
\begin{equation*}
\mathbb{E}_{\mu_{\alpha, \beta'}} \left[ V(X) \right] = R.
\end{equation*}
Since $\beta \mapsto \mathbb{E}_{\mu_{\alpha, \beta}} \left[ V(X) \right]  $ is strictly mononote increasing, we see that $\beta'$ is unique. This defines a mapping $\alpha \mapsto \beta(\alpha)$ for $\alpha \in (\alpha^* - \delta_1, \alpha^*)$ with the property
\begin{equation*}
 \mathbb{E}_{\mu_{\alpha, \beta(\alpha)}} \left[ V(X) \right] = R.   
\end{equation*}
The function $\alpha \mapsto \beta(\alpha)$ is continuous and by employing Lemma \ref{Lem_AuxL2}, $ \beta(\alpha)$ is strictly monotone decreasing on $ (\alpha^* - \delta_1, \alpha^*)$. By the Lebesgue differentiation theorem, it follows that $ \beta : (\alpha^* - \delta_1, \alpha^*) \rightarrow (0, \infty)$ is differentiable almost everywhere. Since, by construction, $\mathbb{E}_{\mu_{\alpha, \beta(\alpha)}} \left[ V(X) \right] \equiv R$, we have that $\frac{\partial}{\partial \alpha } \mathbb{E}_{\mu_{\alpha, \beta(\alpha)}} \left[ V(X) \right] \equiv 0$ and in every point $\alpha \in ( \alpha^* - \delta_1, \alpha^*)$ where $\beta(\alpha)$ is differentiable, we get by the chain rule
\begin{equation*}
    0 = \frac{\partial}{\partial \alpha } \mathbb{E}_{\mu_{\alpha, \beta(\alpha)}} \left[ V(X) \right] = \mathbb{V}_{\mu_{\alpha, \beta(\alpha)}} \left[ V(X) \right ] + \beta'( \alpha) \Cov_{\mu_{\alpha, \beta(\alpha)}} \left[ V(X), W(X) \right]
\end{equation*}
We recall that $H_{\varphi}( \alpha, \beta(\alpha) )$ is a positive definite matrix which implies that
\begin{align*}
    0 < \left( 1, \beta'(\alpha) \right) H_{\varphi} ( \alpha, \beta(\alpha) ) \left( 
    \begin{array}{cc}
        1  \\
         \beta'(\alpha)  
    \end{array} \right) 
    & =\beta'( \alpha) \left( \Cov_{\mu_{\alpha, \beta(\alpha)}} \left[ V(X), W(X) \right] + \beta'(\alpha) \mathbb{V}_{\mu_{\alpha, \beta(\alpha)}} \left[ W(X) \right ] \right) \\
    & = \beta'(\alpha) \left(  \frac{\partial}{\partial \alpha } \mathbb{E}_{\mu_{\alpha, \beta( \alpha)}} \left[ W(X) \right] \right).
\end{align*}
Since $\beta'(\alpha) \in (- \infty, 0)$, we have $ \frac{\partial}{\partial \alpha } \mathbb{E}_{\mu_{\alpha, \beta( \alpha)}} \left[ W(X) \right] \in (- \infty, 0)$. This means, in almost every point $ \alpha \in (\alpha^* - \delta_1, \alpha^*) $, $ \frac{\partial}{\partial \alpha } \mathbb{E}_{\mu_{\alpha, \beta( \alpha)}} \left[ W(X) \right]$ exists and is negative. Thus, the mapping 
\begin{equation*}
    \alpha \mapsto \mathbb{E}_{\mu_{\alpha, \beta(\alpha)}} \left[ W(X) \right]
\end{equation*}
is continous and strictly monotone decreasing on $(\alpha^* - \delta_1, \alpha^*)$. Since $m = \mathbb{E}_{\mu_{\alpha^*, 0}} \left[ W(X) \right]$, it follows that
\begin{equation*}
    \varepsilon := \sup_{ \alpha \in ( \alpha^* - \delta_1, \alpha^*)}\mathbb{E}_{\mu_{\alpha, \beta(\alpha)}} \left[ W(X) \right] - m \in (0, \infty), 
\end{equation*}
and for every $t \in (m, m+ \varepsilon)$, we find a pair $(\alpha, \beta) \in (- \infty, 0) \times (0, \infty)$ such that
\begin{equation*}
    \nabla \varphi(\alpha, \beta) = 
    \left( \begin{array}{c}
         \mathbb{E}_{\mu_{\alpha, \beta}} \left[ V(X) \right]  \\
         \mathbb{E}_{\mu_{\alpha, \beta}} \left[ W(X) \right]
    \end{array} \right)
    = \left(
    \begin{array}{c}
         R  \\
         t
    \end{array} \right),
\end{equation*}
which proves $(i)$.
\par{}
\begin{figure}[ht!]
    \centering
\begin{tikzpicture}
\begin{axis}[
    domain = -2:2,
    samples = 200,
    axis x line = bottom,
    x axis line style={stealth-},
    axis y line = right, 
    width = 10cm,
    height = 6cm,
   ytick=\empty,
   yticklabel={\( \R_+ \)},
   xtick=\empty, 
   xtick={-2},
   xticklabel={\( - \infty \)},
   extra x ticks={-0.5}, 
   extra x tick label={\(  \alpha_* \)},
]
\addplot[black, dashed, domain=-2:0] { {0.2*x^3 + x^2 + 2*abs(sin(deg(x)))} };
\addplot[pattern=north west lines, line width=0.8pt, pattern color=gray, domain=-2:0] { {0.2*x^3 + x^2 + 2*abs(sin(deg(x)))}} \closedcycle;

\draw (20, 300) node[rectangle, draw, fill=white, inner sep=1pt] {$\mathcal{D}_{\varphi}$}; 
\addplot[red, thick, domain=-0.938487:-0.5] {5*(-x-0.5)^(1.5) + 2*abs(x + 0.5)};
\draw (100, 150) node[rectangle, draw, fill=white, inner sep=1pt] {$\beta(\alpha)$};
\end{axis}
\end{tikzpicture}
\caption{In $(i)$, we are able to construct $\beta(\alpha)$ locally to the left of $\alpha_*$.}
\end{figure}
Now we are going to prove $(ii)$. By Lemma \ref{Lem_Domain_phi}, it follows that $\mathcal{D}_{\varphi} = ( - \infty, 0) \times \R$, hence for every $\alpha \in (- \infty, \alpha^*)$, we find an unique $\beta = \beta(\alpha) \in (0, \infty)$ such that
\begin{equation*}
    \mathbb{E}_{\mu_{\alpha, \beta(\alpha)}} \left[ V(X)\right] = R.
\end{equation*}
The mapping $\alpha \mapsto \beta(\alpha)$ is strictly monotone increasing (hence differentiable almost everywhere) and analogously to the first part of this prove, one has that 
\begin{equation*}
    \alpha \mapsto \mathbb{E}_{\mu_{\alpha, \beta(\alpha)}} \left[ W(X) \right]
\end{equation*}
is continuous and strictly monotone decreasing. We define
\begin{equation*}
    x := \sup_{\alpha \in (- \infty, \alpha^*)} \mathbb{E}_{\mu_{\alpha, \beta(\alpha)}} \left[ W(X) \right] = \lim_{\alpha \rightarrow - \infty } \mathbb{E}_{\mu_{\alpha, \beta(\alpha)}} \left[ W(X) \right].
\end{equation*}
By the continuity of $\alpha \mapsto \mathbb{E}_{\mu_{\alpha, \beta(\alpha)}} \left[ W(X) \right]$, for all $t \in (m, x)$ we find $(\alpha, \beta ) \in (- \infty, 0) \times (0, \infty) $ with
\begin{equation*}
\nabla \varphi(\alpha, \beta) = 
    \left( \begin{array}{c}
         \mathbb{E}_{\mu_{\alpha, \beta}} \left[ V(X) \right]  \\
         \mathbb{E}_{\mu_{\alpha, \beta}} \left[ W(X) \right]
    \end{array} \right)
    = \left(
    \begin{array}{c}
         R  \\
         t
    \end{array} \right).
\end{equation*}
In the rest of this proof, we will show that $x = W(V^{-1}(R))$. Let $\mathcal{M} = (\mu_{\alpha, \beta( \alpha)})_{\alpha \in (- \infty, \alpha^*)}$, then $\mathcal{M}$ is a tight family of probability measures (otherwise $R \not\equiv \mathbb{E}_{\mu_{\alpha, \beta(\alpha)}} \left[ V(X) \right] $), thus every sub-family of $\mathcal{M}$ is also tight.  
For any sequence $(\alpha_n)_{n \in \N}$ with $\lim_{n \rightarrow \infty} \alpha_n = - \infty$, we have that 
\begin{equation*}
    x = \lim_{n \rightarrow \infty}  \mathbb{E}_{\mu_{\alpha_n, \beta(\alpha_n)}} \left[ W(X) \right].
\end{equation*}
Assume there exists a sequence $(\alpha_n)_{n \in \N}$ tending to $- \infty$ with 
\begin{equation*}
    \lim_{n \rightarrow \infty} \mathbb{V}_{\mu_{\alpha_n, \beta(\alpha_n)}} \left[ V(X) \right] = 0.
\end{equation*}
Since $(\mu_{\alpha_n, \beta(\alpha_n)})_{n \in \N}$ is tight, we find a weakly convergent subsequence $(\mu_{\alpha_{n_k}, \beta(\alpha_{n_k})})_{k \in \N} $ with (weak) limit $\mu_{\infty}$. We trivially have that 
\begin{equation*}
    \mathbb{E}_{\mu_{\infty}} \left[ V(X) \right] =R.
\end{equation*}
But, since $ \mathbb{V}_{\mu_{\infty}} \left[ V(X) \right] = 0$ and by the symmetry of $\mu_{\infty} $, it follows that
\begin{equation*}
    \mu_{\infty} = \frac{1}{2} \delta_{-V^{-1}(R)} +  \frac{1}{2} \delta_{V^{-1}(R)}.
\end{equation*}
This gives us that $x = W(V^{-1}(R))$. Now consider the case where $\inf_{\alpha \in (- \infty, \alpha^*)} \mathbb{V}_{\mu_{\alpha, \beta(\alpha)}} \left[ V(X) \right] \in (0, \infty)$. We know that $ x \in (0, \infty)$, otherwise 
\begin{equation*}
    \lim_{\alpha \rightarrow -\infty } \mathbb{E}_{\mu_{\alpha, \beta(\alpha)}} \left[ W(X) \right] = \infty,
\end{equation*}
which is a contradiction to $ R = \lim_{\alpha \rightarrow - \infty} \mathbb{E}_{\mu_{\alpha, \beta(\alpha)}} \left[ V(X) \right]$. Let $A \subseteq (- \infty, \alpha^*)$ be the set where $\beta'$ 

(and hence also $\frac{\partial}{\partial \alpha } \mathbb{E}_{\mu_{\alpha, \beta(\alpha)}} \left[ W(X) \right]$) exists. We claim that there exists a subsequence $(\alpha_n)_{n \in \N} \subseteq A$ such that
\begin{equation*}
  \lim_{n \rightarrow \infty} \frac{\partial}{\partial \alpha } \mathbb{E}_{\mu_{\alpha_n, \beta(\alpha_n)}} \left[ W(X) \right] = 0. 
\end{equation*}
If such a sequence $(\alpha_n)_{n \in \N} \subseteq A$ would not exist, then there is an $\varepsilon \in (0, \infty)$ and $\alpha' = \alpha'(\varepsilon) \in (- \infty, \alpha^*)$ such that for all $\alpha \in (- \infty, \alpha')$ we have
\begin{equation*}
   \frac{\partial}{\partial \alpha } \mathbb{E}_{\mu_{\alpha, \beta(\alpha)}} \left[ W(X) \right] \leq - \varepsilon.
\end{equation*}
By integrating over the infinite interval $(- \infty, \alpha')$ we would now get $x = \infty$, which is a contradiction. Thus, we can assume that there exists a sequence $(\alpha_n)_{n \in \N} \subseteq A$ with 
\begin{equation}
\label{Eq_Derivative-EW}
  \lim_{n \rightarrow \infty} \frac{\partial}{\partial \alpha } \mathbb{E}_{\mu_{\alpha_n, \beta(\alpha_n)}} \left[ W(X) \right] = 0. 
\end{equation}
We claim that, along this sequence $(\alpha_n)_{n \in \N}$, we have $\sup_{n \in \N}  \frac{|\beta(\alpha_n)|}{\alpha_n}  \in (0, \infty)$. If this was not the case, then, along a suitable subsequence $(\alpha_{n_k})_{k \in \N} \subseteq (\alpha_{n})_{n \in \N}$, we would have that $\lim_{k \rightarrow \infty} \frac{| \beta(\alpha_{n_k}) | }{\alpha_{n_k}}= \infty $. Let $M \in (0, \infty)$ be some large number and denote $x_k \in (0, \infty)$ to be the maximizer of $x \mapsto \alpha_{n_k} V(x) + \beta(\alpha_{n_k}) W(x)$ on $(0,M)$ and let $y_k \in (0, \infty) $ be the minimizer of $y \mapsto \alpha_{n_k} V(y) + \beta( \alpha_{n_k}) W(y)$ on $(2M,3M)$. Then, since $V$ and $W$ are Orlicz functions, there exist constants $c, C\in (0, \infty)$ such that $ V(y_k) - V(x_k) \leq C$ and $W(y_k) - W(x_k) \geq c$ for all $k \in \N$. This leads to
\begin{align*}
    \mu_{\alpha_{n_k}, \beta( \alpha_{n_k})}  \left[(0,M)  \right]
    &= \frac{2\int_{0}^M e^{\alpha_{n_k}V(x) + \beta( \alpha_{n_k}) W(x)} dx }{Z_{\alpha_{n_k}}} \\
    & \leq \frac{2 M e^{\alpha_{n_k}V(x_k) + \beta( \alpha_{n_k}) W(x_k)} }{Z_{\alpha_{n_k}}} \\
    & \leq e^{ - \alpha_{n_k} \left( V(y_k) - V(x_k) \right) - \beta( \alpha_{n_k}) \left( W(y_k) - W(x_k) \right)} \\
    & \leq e^{ - \alpha_{n_k} C - \beta( \alpha_{n_k}) c } \stackrel{k \rightarrow \infty}{\longrightarrow} 0.
\end{align*}
This means, for every fixed $M$, the $\mu_{\alpha_{n_k}, \beta( \alpha_{n_k})}$-probability of $(0,M)$ tends to $0$ as $k \rightarrow \infty$. This contradicts the construction of $\beta(\alpha_{n_k})$, where we have
\begin{equation*}
    R= \mathbb{E}_{\mu_{\alpha_{n_k}, \beta(\alpha_{n_k})}} \left[ V(X) \right].
\end{equation*}
Thus, $ \left(  \frac{\beta(\alpha_n)}{\alpha_n} \right)_{n \in \N} $ is a bounded sequence and we find a convergent subsequence $ (\gamma_n^{(1)})_{n \in \N} \subseteq \left( \alpha_n \right)_{n \in \N}$  with limit $- \lambda \in (- \infty, 0)$. Further, by de L'Hospital's rule, we have that $- \lambda = \lim_{n \rightarrow \infty}  \beta'(\alpha_n)$.

Since $\mathcal{M}$ is tight, we find another subsequence $(\gamma_n^{(2)})_{n \in \N} \subseteq (\gamma_n^{(1)})_{n \in \N}$ and a distribution $\mu_{\infty}$ such that $\mu_{\gamma_n^{(2)}, \beta(\gamma_n^{(2)})} \longrightarrow \mu_{\infty}$ as $n \rightarrow \infty$ (we understand this limit as weak limit of distributions). 
We claim that for all $x \in \R$, we have $V(x) \geq \lambda W(x)$. Assume this was not the case, i.e., we find a small interval $I =(y- \delta, y + \delta) \subseteq \R$ and some $\varepsilon \in (0, \infty)$ such that for all $x \in (y- \delta, y + \delta)$
\begin{equation*}
  V(x) - \lambda W(x) \leq - \varepsilon. 
\end{equation*}
Then (since $\frac{\beta(\gamma_n^{(2)})}{\gamma_n^{(2)}} \leq - \lambda + \frac{\varepsilon}{2}\left( \sup_{x \in I} W(x) \right)^{-1}$, if $n$ is sufficiently large)
\begin{align*}
    Z_{\gamma_n^{(2)}, \beta(\gamma_n^{(2)})}& = \int_{\R} e^{\gamma_n^{(2)}V(x) + \beta(\gamma_n^{(2)})W(x)} dx \\
    & = \int_{\R} \exp \left( \gamma_n^{(2)} \left( V(x) + \frac{\beta(\gamma_n^{(2)})}{\gamma_n^{(2)}} W(x)\right) \right) dx \\
    & \geq \int_{I} \exp \left( \gamma_n^{(2)} \left( V(x) + \frac{\beta(\gamma_n^{(2)})}{\gamma_n^{(2)}} W(x)\right) \right) dx \\
    & \geq \int_{I} \exp \left( - \frac{\varepsilon \gamma_n^{(2)}}{2}  \right) dx \longrightarrow \infty, \quad n \rightarrow \infty.
\end{align*}
From that we deduce 
\begin{equation*}
    \mu_{\infty} \left[  x \in \R : V(x) - \lambda W(x) \leq 0 \right] = 1 \quad \text{and} \quad \mu_{\infty} \left[  x \in \R : V(x) - \lambda W(x) \leq - \varepsilon \right] > 0.
\end{equation*}
This means, for a random variable $X \sim \mu_{\infty}$, we have $V(X) \leq \lambda W(X)$ almost surely and with positive probability we have $ V(X) < \lambda W(X)$. By the very construction of $(\alpha_n)_{n \in \N}$, of which $(\gamma_n^{(2)})_{n \in \N}$ is a subsequence, we have by \eqref{Eq_Derivative-EW}
\begin{equation*}
 0 =   \Cov_{\mu_{\infty}} \left[ V(X), W(X) \right] - \lambda \mathbb{V}_{\mu_{\infty}} \left[ W(X) \right ].
\end{equation*}
Recall that this quantity is a (non-trivial) quadratic form of the covariance matrix $C \in \R^{2 \times 2}$ of $\left(V(X), W(X) \right)$. This means $V(X)$ and $W(X)$ are linearly dependent, or in other words, there exists a $\nu \in (0, \infty)$ such that $V(X) = \nu W(X)$ almost surely (although we assumed that $V$ and $W$ are linearly independent on $\R$, this does not necessarily mean that $V$ and $W$ are linearly independent on $\supp(X)$). Since $V(X) \leq \lambda W(X)$ almost surely, we thus have $\nu \leq \lambda$. This leads to
\begin{align*}
 0 & =   \Cov_{\mu_{\infty}} \left[ V(X), W(X) \right] - \lambda \mathbb{V}_{\mu_{\infty}} \left[ W(X) \right ] \\
 & =  \frac{1}{\nu} \mathbb{V}_{\mu_{\infty}} \left[ V(X) \right ] - \frac{\lambda}{\nu^2} \mathbb{V}_{\mu_{\infty}} \left[ V(X) \right ].
\end{align*}
Since by assumption $\mathbb{V}_{\mu_{\infty}} \left[ V(X) \right ] \in (0, \infty)$, we get $\nu = \lambda$. But this is a contradiction, since, on the one hand, we have $V(X) = \lambda W(X)$ almost surely, and on the other hand, $V(X) < \lambda W(X)$ with positive probability. This means, $\lambda $ has the property that $V(x) \geq \lambda W(x)$ for all $x \in \R$. In particular, it is true that
\begin{equation*}
    \lambda \leq \frac{R}{W(V^{-1}(R))}.
\end{equation*}
We now get (using that $\mathbb{E}_{\mu_{\infty}} \left[ V(X) \right] = R $ and $W(X) = \frac{1}{\lambda} V(X)$)
\begin{align*}
    x &= \mathbb{E}_{\mu_{\infty}} \left[ W(X) \right] = \frac{1}{\lambda} \mathbb{E}_{\mu_{\infty}} \left[ V(X) \right] \geq W(V^{-1}(R)).
\end{align*}
This finishes the proof.
\begin{figure}[ht!]
    \centering
\begin{tikzpicture}[xscale=1.5, yscale=1.3]
    % Koordinatensystem
    \draw[<-] (-6,0) -- (0,0) node[left] {};
    \draw[->] (0,0) -- (0,3.7) node[above] {};

    \fill[ pattern=north west lines, pattern color=gray] (-6,-0.5) -- (0,-0.5) -- (0,-0.5) -- (0,3.7) -- (0,3.7) -- (-6,3.7) -- cycle;
    % Funktion
    \draw[domain=-6:-1,smooth,variable=\x,red] plot ({\x},{0.55*(-0.092*(\x+1)^(3) - 0.5*(\x+1.5)^(2) +0.5^3 -(\x +1) )} ) node[right] {};
   
   \draw (-0.5, 3.3) node[rectangle, draw, fill=white, inner sep=1pt] {$\mathcal{D}_{\varphi}$};
    % Achsenbeschriftungen
    \foreach \x in {-1}
        \draw (\x,0) -- (\x,-0.1) node[below] {$\alpha^*$};
        \draw (-3.5, 1.8) node[rectangle, draw, fill=white, inner sep=1pt] {$\beta(\alpha)$};
        
        % \draw (-6, 3) node[rectangle, draw, fill=white, inner sep=1pt] {$\mathcal{D}_{\varphi}$}; 
        \draw (-6,0) node[left] {$-\infty$};
\end{tikzpicture}
\caption{In $(ii)$, we construct $\beta(\alpha)$ for all $\alpha \in (-\infty,\alpha^*)$. The gray-shaded area indicates the region $\mathcal{D}_{\varphi}$.}
\end{figure}
\end{proof}
\begin{rem}
    \label{Rem_Cond_thm}
    When looking at the proof of Lemma \ref{Lem_AuxL2}, one is tempted to think that assuming
    \begin{equation*}
        \liminf_{x \rightarrow \infty} \frac{V(x)}{W(x)} > 0
    \end{equation*}
    should be sufficient to push $\varepsilon $ as far as possible, i.e., $\varepsilon = W(V^{-1}(R)) -m$. But this is not the case! If we set $V(x):= x^4 + x^2$ and $W(x):=x^4$, then $V$ and $W$ are linearly independent and we have
    \begin{equation*}
        \liminf_{x \rightarrow \infty} \frac{V(x)}{W(x)} = 1 > 0.
    \end{equation*}
    It is clear that for $R=10^3$ and for $\alpha = -1$, we have
    \begin{equation*}
        \sup_{\beta: (\alpha, \beta) \in \mathcal{D}_{\varphi}} \mathbb{E}_{\mu_{-1,\beta }}\left[V(X) \right] = \mathbb{E}_{\mu_{-1, 1}}\left[V(X) \right] < R.
    \end{equation*}
    Thus, the construction of $\beta(\alpha)$ as in the proof of Lemma \ref{Lem_AuxL3} $(ii)$ is possible only locally. We further have
    \begin{equation*}
        \lim_{\alpha \rightarrow - \infty} \sup_{\beta: (\alpha, \beta) \in \mathcal{D}_{\varphi}} \mathbb{E}_{\mu_{\alpha,\beta }}\left[W(X) \right] = \lim_{\alpha \rightarrow - \infty} \mathbb{E}_{\mu_{\alpha, | \alpha | }}\left[W(X) \right] = 0,
    \end{equation*}
    destroying the hope of maximizing $\varepsilon$ by letting $\alpha$ tend to $- \infty$. A better understanding of the image of $\nabla^{-1} (\varphi) $ would help to characterize the precise value of $\varepsilon$ in both, Theorem  \ref{ThmSLD} and Theorem \ref{ThmSLD_thinshell}.
\end{rem}
Heuristically speaking, the goal of Lemma \ref{Lem_AuxL2} was to maximize $ \mathbb{E}_{\mu_{\alpha, \beta}} \left[ W(X) \right]$ under the constraint $\mathbb{E}_{\mu_{\alpha, \beta}} \left[ V(X) \right] \equiv R$. It turned out that this can be achieved by letting $\alpha $ tend to $- \infty$ and choosing $\beta = \beta( \alpha)$ such that $ \mathbb{E}_{\mu_{\alpha, \beta}} \left[ V(X) \right] \equiv R$. In the proof of Theorem \ref{ThmSLD_thinshell}, we need to choose a pair $(\alpha, \beta) \in \R^2$ such that 
\begin{equation*}
    \mathbb{E}_{\mu_{\alpha, \beta}} \left[ V(X) \right] \equiv R \quad \text{and} \quad \mathbb{E}_{\mu_{\alpha, \beta}} \left[ W(X) \right] \text{ is "small"}.
\end{equation*}
This is achieved through the following Lemma.
\begin{lem}
\label{Lem_Aux4}
Let $V,W: \R \rightarrow [0, \infty)$ be two linearly independent Orlicz functions that satisfy Assumption \ref{Ass_A} and let $R \in (0, \infty)$. Then, there exists an $\varepsilon \in (0, \infty)$ such that for all $ t \in (m- \varepsilon, m)$, we find a pair $(\alpha, \beta) \in (-\infty, 0)^2$ such that 
\begin{equation}
\label{Eq_nabla_phi}
\nabla \varphi( \alpha, \beta) = \left( \begin{array}{c}
      R \\
      t
\end{array} \right).
\end{equation}
\end{lem}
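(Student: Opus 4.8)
The plan is to mimic the proof of Lemma~\ref{Lem_AuxL2}$(i)$, the only change being that we now move to the \emph{right} of $\alpha^{*}$, along a branch of the constraint curve on which $\beta<0$. Recall that $\alpha^{*}\in(-\infty,0)$ is the unique point with $\mathbb{E}_{\mu_{\alpha^{*},0}}[V(X)]=R$ and that $m=\mathbb{E}_{\mu_{\alpha^{*},0}}[W(X)]$. By Lemma~\ref{Lem_Domain_phi}$(i)$ --- and this is the only place Assumption~\ref{Ass_A} enters --- the point $(\alpha^{*},0)$ lies in the interior of $\mathcal{D}_{\varphi}$, so $\varphi$ is $\mathcal{C}^2$ near $(\alpha^{*},0)$ with positive definite Hessian $H_{\varphi}(\alpha,\beta)=\Cov_{\mu_{\alpha,\beta}}[(V(X),W(X))]$. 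First I would apply the implicit function theorem to $F(\alpha,\beta):=\partial_{\alpha}\varphi(\alpha,\beta)-R=\mathbb{E}_{\mu_{\alpha,\beta}}[V(X)]-R$ at $(\alpha^{*},0)$: since $\partial_{\beta}F(\alpha^{*},0)=\Cov_{\mu_{\alpha^{*},0}}[V(X),W(X)]>0$ (see the last paragraph), this yields a $\mathcal{C}^1$ map $\alpha\mapsto\beta(\alpha)$ defined near $\alpha^{*}$ with $\beta(\alpha^{*})=0$ and $\mathbb{E}_{\mu_{\alpha,\beta(\alpha)}}[V(X)]\equiv R$; alternatively, one may construct $\beta(\alpha)$ directly from the strict monotonicity statements of Lemma~\ref{Lem_AuxL1}, exactly as in Lemma~\ref{Lem_AuxL2}$(i)$.

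Differentiating $\mathbb{E}_{\mu_{\alpha,\beta(\alpha)}}[V(X)]\equiv R$ gives $\mathbb{V}_{\mu_{\alpha,\beta(\alpha)}}[V(X)]+\beta'(\alpha)\Cov_{\mu_{\alpha,\beta(\alpha)}}[V(X),W(X)]=0$, hence $\beta'(\alpha)<0$; since $\beta(\alpha^{*})=0$ I would then pass to an interval $(\alpha^{*},\alpha^{*}+\delta_1)$ with $\alpha^{*}+\delta_1<0$ on which $\beta(\alpha)<0$, so that $(\alpha,\beta(\alpha))\in(-\infty,0)^2$ there. Setting $g(\alpha):=\mathbb{E}_{\mu_{\alpha,\beta(\alpha)}}[W(X)]$, which is $\mathcal{C}^1$ with $g(\alpha^{*})=m$, the chain rule together with the identity $\mathbb{V}_{\mu_{\alpha,\beta(\alpha)}}[V(X)]=-\beta'(\alpha)\Cov_{\mu_{\alpha,\beta(\alpha)}}[V(X),W(X)]$ and the positive definiteness of $H_{\varphi}$ gives, exactly as in Lemma~\ref{Lem_AuxL2}$(i)$,
\[
0<\left(1,\beta'(\alpha)\right)H_{\varphi}(\alpha,\beta(\alpha))\left(\begin{array}{c}1\\ \beta'(\alpha)\end{array}\right)=\beta'(\alpha)\,g'(\alpha),
\]
so $g'(\alpha)<0$, i.e.\ $g$ is strictly decreasing and $g(\alpha)<m$ on $(\alpha^{*},\alpha^{*}+\delta_1)$. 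Putting $\varepsilon:=m-\lim_{\alpha\uparrow\alpha^{*}+\delta_1}g(\alpha)\in(0,\infty)$, the intermediate value theorem produces, for each $t\in(m-\varepsilon,m)$, some $\alpha\in(\alpha^{*},\alpha^{*}+\delta_1)$ with $g(\alpha)=t$; then $(\alpha,\beta(\alpha))\in(-\infty,0)^2$ satisfies $\nabla\varphi(\alpha,\beta(\alpha))=\bigl(\mathbb{E}_{\mu_{\alpha,\beta(\alpha)}}[V(X)],\,\mathbb{E}_{\mu_{\alpha,\beta(\alpha)}}[W(X)]\bigr)=(R,t)$, which is \eqref{Eq_nabla_phi}.

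There is no serious obstacle here beyond getting the direction right: every displayed computation coincides with one already carried out in Lemma~\ref{Lem_AuxL2}$(i)$, so the only two points that really require attention are (a) that $(\alpha^{*},0)$ is interior to $\mathcal{D}_{\varphi}$, which needs $\liminf_{x\to\infty}V(x)/W(x)>0$, and (b) the strict inequality $\Cov_{\mu_{\alpha,\beta}}[V(X),W(X)]>0$, which does \emph{not} follow from positive definiteness of $H_{\varphi}$ alone --- here one invokes that $V$ and $W$ are non-decreasing on $[0,\infty)$ and $\mu_{\alpha,\beta}$ is non-degenerate, so Chebyshev's correlation inequality (equivalently, the monotonicity in Lemma~\ref{Lem_AuxL1}) applies. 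In contrast to Lemma~\ref{Lem_AuxL2}$(ii)$, the construction stays in a fixed neighbourhood of $\alpha^{*}$ and no tightness or compactness analysis as $\alpha\to-\infty$ is required; accordingly $\varepsilon$ is small and non-explicit, consistently with Remark~\ref{Rem_Cond_thm}.
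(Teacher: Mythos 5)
Your proof is correct and follows essentially the same strategy as the paper's: move to the right of $\alpha^{*}$ along the constraint curve $\{\mathbb{E}_{\mu_{\alpha,\beta}}[V(X)]=R\}$, staying now in the third quadrant, and use strict monotonicity of $\alpha\mapsto\mathbb{E}_{\mu_{\alpha,\beta(\alpha)}}[W(X)]$. Two points of divergence are worth flagging. First, the paper constructs $\beta(\alpha)$ on the full interval $(\alpha^{*},0)$, anchoring the other endpoint at the unique $\beta_{*}\in(-\infty,0)$ with $\mathbb{E}_{\mu_{0,\beta_{*}}}[V(X)]=R$, and thereby takes $\varepsilon=m-\mathbb{E}_{\mu_{0,\beta_{*}}}[W(X)]$. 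This is the largest $\varepsilon$ this argument can deliver and, unlike your $\varepsilon=m-\lim_{\alpha\uparrow\alpha^{*}+\delta_{1}}g(\alpha)$, it does not depend on an arbitrary local choice of $\delta_{1}$. Your local construction proves the lemma as stated, but gives a strictly weaker (smaller, non-canonical) $\varepsilon$. Second, the paper does not invoke the implicit function theorem: it gets $\beta(\alpha)$ from strict monotonicity plus the intermediate value theorem, concludes $\beta(\cdot)$ is monotone and hence a.e.\ differentiable, and carries out the Hessian computation at points of differentiability. Your IFT route requires $(\alpha^{*},0)$ to be interior to $\mathcal{D}_{\varphi}$ and you correctly identify Assumption~\ref{Ass_A} (via Lemma~\ref{Lem_Domain_phi}$(i)$) as what guarantees this; in fact the paper's monotonicity route would also work with $(\alpha^{*},0)$ merely a boundary point of $\mathcal{D}_{\varphi}$, since the curve immediately enters the open quadrant $(-\infty,0)^{2}\subseteq\mathcal{D}_{\varphi}$. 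Your observation that $\Cov_{\mu_{\alpha,\beta}}[V(X),W(X)]>0$ --- because $V(|x|)$ and $W(|x|)$ are both strictly increasing in $|x|$ and $\mu_{\alpha,\beta}$ is non-degenerate --- is correct and is indeed implicit in the paper's use of Lemma~\ref{Lem_AuxL1}.
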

\begin{proof}
Let $\alpha_* \in (- \infty,0)$ be the unique solution to
\begin{equation*}    \mathbb{E}_{\mu_{\alpha_*, 0}}\left[V(X) \right] = R.
\end{equation*}
On the other hand, analogously, we can find a unique $\beta_* \in (- \infty, 0)$ such that 
\begin{equation*}
\mathbb{E}_{\mu_{0, \beta_*}}\left[V(X) \right] = R.    
\end{equation*}
Thus, for any $\alpha \in (\alpha_*, 0)$, we can find a unique $\beta= \beta( \alpha) \in (- \infty, 0)$ such that
\begin{equation*}
    \alpha \mapsto \mathbb{E}_{\mu_{0, \beta_*}}\left[V(X) \right] \equiv R.
\end{equation*}
\begin{figure}[h!]
    \centering
\begin{tikzpicture}[xscale=1.3, yscale=1.1]
    % Koordinatensystem
    \draw[<-] (-6,0) -- (0,0) node[left] {};
    \draw[->] (0,0) -- (0,-4) node[above] {};

    \fill[ pattern=north west lines, pattern color=gray] (-6,0) -- (0,0) -- (0,-4) -- (-6,-4) -- cycle;
    % Funktion
    % \draw[domain=-6:-1,smooth,variable=\x,red] plot ({\x},{0.55*(-0.092*(\x+1)^(3) - 0.5*(\x+1.5)^(2) +0.5^3 -(\x +1) )} ) node[right] {};

    % Achsenbeschriftungen
    \foreach \x in {-5}
        \draw (\x,0) -- (\x,-0.1) node[below] {$\alpha^*$};
        \draw (0,-3) -- (0.1,-3)
        node[right] {$\beta^*$};
        \draw (-1.7, -1.8) node[rectangle, draw, fill=white, inner sep=1pt] {$\beta(\alpha)$};
         \draw (-5.5, -3.5) node[rectangle, draw, fill=white, inner sep=1pt] {$\mathcal{D}_{\varphi}$};
        \draw[domain=-5:0,smooth,variable=\x,red] plot ({\x},{ -3/25*((\x+5)^2)}) node[right] {};
        % \draw (-6, 3) node[rectangle, draw, fill=white, inner sep=1pt] {$\mathcal{D}_{\varphi}$}; 
        \draw (-6,0) node[left] {$-\infty$};
        %\addplot[black, dashed, domain=-2:0] { {0.2*x^3 + x^2 + 2*abs(sin(deg(x)))} };
        \fill [pattern=north west lines, pattern color=gray, domain=-6:0, variable=\x]
      (-6, 0)
      -- plot ({\x}, {0.15*(0.01*abs(\x)^3 + 0.1*(\x)^2 + 0.8*abs(sin(deg(\x))))})
      -- cycle;

       \draw[domain=-6:0,smooth,variable=\x,black] plot ({\x},{0.15*(0.01*abs(\x)^3 + 0.1*(\x)^2 + 0.8*abs(sin(deg(\x))))}) node[right] {};

       \fill [pattern=north west lines, pattern color=gray, domain=-4:0, variable=\x, rotate=90]
      (-4, 0)
      -- plot ({\x}, {-0.4*(-0.05*abs(\x)^3 + 0.3*(\x)^2 )})
      -- cycle;
      
      \draw[domain=-4:0,smooth,variable=\x,black, rotate=90] plot ({\x},{-0.4*(-0.05*abs(\x)^3 + 0.3*(\x)^2 )}) node[right] {};
      %  \draw[pattern=north west lines, line width=0.8pt, pattern color=gray, domain=-5:0, variable=\x] { {0.01*(0.2*\x^3 + \x^2 + 2*abs(sin(deg(\x))))}};

\end{tikzpicture}
\caption{In Lemma \ref{Lem_Aux4}, the construction of $\beta(\alpha)$ changes slightly compared to Lemma \ref{Lem_AuxL2}.}
\end{figure}
\par{}
Analogously to the proof of Lemma \ref{Lem_AuxL2}, we have that $ \beta(\alpha)$ is strictly decreasing and hence almost surely differentiable. Moreover, it holds that 
\begin{equation*}
    \alpha \mapsto \mathbb{E}_{\mu_{\alpha, \beta(\alpha)}} \left[ W(X) \right]
\end{equation*}
is strictly decreasing on $(\alpha_*, 0)$ and thus, by defining
\begin{equation*}
    \varepsilon := m - \mathbb{E}_{\mu_{0, \beta_*}} \left[ W(X) \right] \in (0, \infty),
\end{equation*}
the claim follows.
% \begin{figure}[h!]
%     \centering
% \begin{tikzpicture}
% \begin{axis}[
%     domain = -2:2,
%     samples = 200,
%     axis x line = bottom,
%     x axis line style={stealth-},
%     axis y line = right, 
%     width = 10cm,
%     height = 6cm,
%    ytick=\empty,
%    xtick=\empty, 
%    xtick={-2},
%    xticklabel={\( - \infty \)},
%    extra x ticks={-0.5}, 
%    extra x tick label={\(  \alpha_* \)},
% ]
% \addplot[black, dashed, domain=-2:0] { {0.2*x^3 + x^2 + 2*abs(sin(deg(x)))} };
% \addplot[pattern=north west lines, line width=0.8pt, pattern color=gray, domain=-2:0] { {0.2*x^3 + x^2 + 2*abs(sin(deg(x)))}} 
% \closedcycle;
% \draw (20, 300) node[rectangle, draw, fill=white, inner sep=1pt] {$\mathcal{D}_{\varphi}$}; 
% \addplot[red, thick, domain=-2:0] {-2- x + 2*sin(x)};
% \draw (100, 150) node[rectangle, draw, fill=white, inner sep=1pt] {$\beta(\alpha)$};
% \end{axis}
% \end{tikzpicture}
% \caption{Picture of $\beta (\alpha)$ in the segment $(\alpha_*, 0)$. }
% \end{figure}
\end{proof}

\subsection{Establishing an Edgeworth expansion}
\label{Section:Edgeworth}

In this subsection we prove that, for linearly independent $V,W$, the sequence of random variables
\begin{equation}
\label{Eq_rnd_vec}
    \frac{1}{\sqrt{n}} \sum_{i=1}^n \left(V(X_i), W(X_i) \right),
\end{equation}
where $(X_i)_{i \in \N}$ is an iid sequence with $X_1 \sim \mu_{\alpha, \beta}$, satisfies an Edgeworth expansion. To that end, we need to establish a Cram\'{e}r-type bound for the characteristic function $\varphi : \R^2 \rightarrow \mathbb{C}$ of \eqref{Eq_rnd_vec} given by 
\begin{equation*}
    \varphi(t,s) := \mathbb{E}_{\mu_{\alpha, \beta}} \left[ \exp \left( i t V(X) + i s W(X) \right) \right], \quad (t,s) \in \R^2.
\end{equation*}
In the proof of Lemma \ref{Lem_AuxL3} we show that
\begin{equation}
\label{Eq_Char_fct_Cancelation}
    \limsup_{||(t,s)||_2 \rightarrow \infty} \left | \varphi ( t,s) \right | =\limsup_{||(t,s)||_2 \rightarrow \infty} \left | \frac{1}{Z_{\alpha, \beta }} \int_{\R} e^{\alpha V(x) + \beta W(x) } e^{ i t V(x) + i s W(x)} dx \right |   < 1.
\end{equation}
In the parlance of probabilistic number theory, \eqref{Eq_Char_fct_Cancelation} is satisfied if the oscillatory integral given by the characteristic function $\varphi $ shows at least a very small cancellation. Showing \eqref{Eq_Char_fct_Cancelation} is not straight forward, since our sequence in \eqref{Eq_rnd_vec} is a sequence of random vectors in $\R^2$. In the classical theory (see, e.g., \cite[Section 2]{bhattacharya2010}), we would like to see that the distribution of
\begin{equation*}
    \left( V(X_1), W(X_1) \right), \quad X_1 \sim \mu_{\alpha, \beta},
\end{equation*}
has a non-degenerate (w.r.t. the $2$-dimensional Lebesgue measure) component. Since this particular sequence is concentrated on the $1$-dimensional manifold
\begin{equation*}
    \mathcal{M}:= \left \{  \left (V(x), W(x) \right)  | x \in \R  \right \},
\end{equation*}
the distribution of the random vector in \eqref{Eq_rnd_vec} is degenerated in the $2$-dimensional sense. So we need to tackle the limit in \eqref{Eq_Char_fct_Cancelation} directly. 
The classical tool to deal with such integrals is Van der Corput's lemma (see Proposition \ref{Prop_vanderCorput}). In our case, we are dealing with a phase function of the form $tV+ sW$, where $(t,s) \in \R^2$ and $V,W$ are either Orlicz functions or $V(x) = |x|^p$ and $W(x) = |x|^q$ with $p,q \in (0,1)$ and $p \neq q$. In many cases, the function $x \mapsto tV(x)+ sW(x)$ lacks the necessary smoothness and may also have many inflection points over $\R$. Thus, we are not able to directly apply Van der Corput's lemma. Our idea is to find a family of intervals $J = J(t,s) \subseteq \R$ such that the function $ t V + s W$ restricted on $J$ is $\mathcal{C}^2$ and is strictly monotone and has no inflection points on $J$. On such an interval we can apply Van der Corput's lemma to see at least some amount of cancellation in \eqref{Eq_Char_fct_Cancelation}. In order for everything to work out nicely, we need to impose more regularity on $V$ and $W$, which is the reason for our notion of an Orlicz function.

% \begin{Ass}
%     \label{Ass_C}
%     We say two Orlicz functions $V,W: \R \rightarrow [0, \infty)$ satisfy Assumption \ref{Ass_C}, if they are linearly independent and $\mathcal{C}^2$ in all but countably many points that do not accumulate anywhere. 
% \end{Ass}

\begin{proposition}(Van der Corput's lemma)
    \label{Prop_vanderCorput}
    Let $J = (a,b) \subseteq \R$ be an interval and $f: J \rightarrow \R$ be a convex or concave function which is continuously differentiable on $J$ and assume that $ | f'(x) | > 0$ for all $x \in J$. Moreover, let $\phi : J \rightarrow \R $ be a $\mathcal{C}^1(J)$ function. Then, there exists an universal constant $c \in (0, \infty)$ such that for all $\lambda \in \R \backslash \{0\}$ , it holds
    \begin{equation*}
        \left| \int_{a}^b \phi(x) e^{i \lambda f(x) } dx \right| \leq c \left( \max_{x \in J} | \phi(x) | + \int_{J} | \phi'(x) | dx \right) \frac{1}{| \lambda |}.
    \end{equation*}
\end{proposition}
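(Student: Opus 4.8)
The plan is to prove Van der Corput's lemma by the standard integration-by-parts argument, reducing the oscillatory integral to the total variation of the amplitude-times-reciprocal-derivative factor and then using convexity/concavity to control that total variation uniformly.

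\textbf{Step 1 (Integration by parts).} First I would write, for $\lambda \neq 0$,
\[
\int_a^b \phi(x) e^{i\lambda f(x)}\,dx = \int_a^b \frac{\phi(x)}{i\lambda f'(x)}\,\frac{d}{dx}\bigl(e^{i\lambda f(x)}\bigr)\,dx,
\]
which is legitimate since $f'$ does not vanish on $J=(a,b)$ and $f\in\mathcal C^1(J)$, $\phi\in\mathcal C^1(J)$. Integrating by parts gives
\[
\int_a^b \phi(x) e^{i\lambda f(x)}\,dx = \frac{1}{i\lambda}\left[\frac{\phi(x)}{f'(x)}e^{i\lambda f(x)}\right]_a^b - \frac{1}{i\lambda}\int_a^b \left(\frac{\phi(x)}{f'(x)}\right)' e^{i\lambda f(x)}\,dx .
\]
Taking absolute values, the boundary term is bounded by $\frac{1}{|\lambda|}\bigl(|\phi(b)|/|f'(b)| + |\phi(a)|/|f'(a)|\bigr)$ and the integral term by $\frac{1}{|\lambda|}\int_a^b \bigl|(\phi/f')'\bigr|\,dx$, i.e. by $\frac1{|\lambda|}$ times the total variation of $\phi/f'$ on $J$.

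\textbf{Step 2 (Bounding the total variation via monotonicity of $1/f'$).} The crucial point — and what I expect to be the main obstacle — is that a priori $1/f'$ need not be in $L^\infty$ or even of bounded variation, and the bound we want involves only $\max|\phi|$ and $\int|\phi'|$, with no appearance of $f$ at all. Here the convexity (or concavity) hypothesis on $f$ is what saves us: $f'$ is monotone on $J$, hence $1/f'$ is monotone on $J$ (note $f'$ has constant sign since it is continuous and nonvanishing), and any monotone function $g=1/f'$ on an interval satisfies, for any $\mathcal C^1$ function $\phi$,
\[
\int_a^b |(\phi g)'|\,dx \le \int_a^b |\phi' g|\,dx + \int_a^b |\phi g'|\,dx \le \Bigl(\sup_{x\in J}|g(x)|\Bigr)\int_a^b|\phi'|\,dx + \Bigl(\sup_{x\in J}|\phi(x)|\Bigr)\int_a^b |g'|\,dx,
\]
and since $g$ is monotone, $\int_a^b |g'|\,dx = |g(b)-g(a)| \le 2\sup_{x\in J}|g(x)|$. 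Combining with the boundary term, the whole expression is bounded by $\frac{c}{|\lambda|}\bigl(\max_J|\phi| + \int_J|\phi'|\bigr)\cdot\sup_J|1/f'|$ for an absolute constant $c$.

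\textbf{Step 3 (Removing the $\sup|1/f'|$ factor).} The statement as written has no $\sup_J |1/f'|$ factor, so the above only proves the lemma when $\sup_J|1/f'|\le 1$, i.e. $|f'|\ge 1$ on $J$. To handle general $f$ one rescales: apply the case just proved to $\tilde f = f/M$ where $M := \inf_{x\in J}|f'(x)|$ (if $M=0$ the stated inequality with a fixed $c$ can fail, so implicitly one assumes $\inf|f'|>0$, or interprets the constant as permitted to depend on it; I would remark on this). With $\tilde f$ one has $|\tilde f'|\ge 1$, and $\int\phi e^{i\lambda f} = \int \phi e^{i(\lambda M)\tilde f}$, so the bound becomes $\frac{c}{|\lambda M|}(\max|\phi| + \int|\phi'|)$. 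Thus the clean statement holds with the understanding that $c$ absorbs (or the bound carries) the factor $1/\inf_J|f'|$; in the application in Lemma \ref{Lem_AuxL3} the interval $J$ is chosen precisely so that $f'$ is bounded away from zero there, which is why the statement suffices. I would close by noting that the convexity/concavity hypothesis was used only through the monotonicity of $f'$, and that a quick alternative is to invoke the classical first-derivative Van der Corput estimate from the harmonic analysis literature (e.g. Stein) verbatim.
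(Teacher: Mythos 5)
The paper states Proposition~\ref{Prop_vanderCorput} without proof, treating it as a classical fact from harmonic analysis, so there is no in-paper argument to compare against; your integration-by-parts proof is the standard one (see, e.g., Stein, \emph{Harmonic Analysis}, Ch.~VIII) and Steps~1--2 are correct. The convexity/concavity hypothesis enters exactly as you say: it makes $f'$ monotone of constant sign, hence $1/f'$ monotone, so its total variation is controlled by twice its supremum, and the estimate follows.

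Your Step~3 raises a correct and substantive objection to the statement itself, not to your proof. With only $|f'|>0$ on $J$ and a \emph{universal} constant $c$, the inequality as printed is false: taking $f(x)=\varepsilon x$ on $J=(0,1)$ and $\phi\equiv 1$ gives
\[
\left|\int_0^1 e^{i\lambda\varepsilon x}\,dx\right| \;=\; \frac{2\,|\sin(\lambda\varepsilon/2)|}{|\lambda\varepsilon|},
\]
which forces $c\geq 2/\varepsilon$. The correct bound carries a factor $\bigl(\inf_{J}|f'|\bigr)^{-1}$, equivalently one must normalize to $|f'|\geq 1$, exactly as your computation produces. This is not purely cosmetic for the paper: in Proposition~\ref{Prop_Main_Tool} the constants $c_n$ are asserted to be ``uniformly bounded by some absolute constant,'' but once the missing factor is restored this assertion requires a uniform lower bound on $\inf_{J_{\lambda_n}}|(V+\lambda_n W)'|$ over $n$. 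Proposition~\ref{Prop_Aux_3} only guarantees that the phase is strictly monotone on $J_{\lambda}$, and strict monotonicity on an open interval does not preclude the derivative from tending to zero at an endpoint; a further compactness/shrinking argument (of the same flavour as the one already used there to bound $|J_\lambda|$ below) is needed to make this uniform positivity explicit. You have correctly identified both the standard proof and the place where the surrounding argument needs to be tightened.
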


\begin{proposition}
    \label{Prop_Aux_1}
    Let $V,W:\R \rightarrow [0, \infty)$ be two Orlicz functions or let $V(x) = |x|^p$ and $W(x) = |x|^q$ with $p, q \in (0,1)$ and $p \neq q$. For $\lambda \in [-1,1]$, let $\mathcal{M}_{\lambda}$ denote the set of points in $\R$ where the function 
    \begin{equation*}
        x \mapsto V(x) + \lambda W(x)
    \end{equation*}
    changes its curvature. Then $\overline{\mathcal{M}_{\lambda}}$ cannot contain an interval.
\end{proposition}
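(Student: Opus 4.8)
The plan is to argue by contradiction and to reduce the statement to an elementary fact about continuous functions: a continuous function whose zero set is dense in an interval vanishes identically on that interval. The structural input is that $V$ and $W$ are $\mathcal{C}^2$ off a set without accumulation points (for $V(x)=|x|^p$, $W(x)=|x|^q$ with $p,q\in(0,1)$ this exceptional set is just $\{0\}$), so that, writing $g_\lambda := V + \lambda W$ and letting $D\subseteq\R$ be the set where $V$ or $W$ fails to be $\mathcal{C}^2$, the second derivative $g_\lambda'' = V'' + \lambda W''$ exists and is continuous on $\R\setminus D$, while $D$ itself is nowhere dense.

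Suppose, towards a contradiction, that $\overline{\mathcal{M}_\lambda}$ contains a non-degenerate interval; shrinking it, we may fix a bounded non-degenerate open interval $I$ with $I\subseteq\overline{\mathcal{M}_\lambda}$, so that $D\cap I$ is finite. Deleting those finitely many points yields a non-degenerate open subinterval $I'\subseteq I\setminus D$; since $I'$ is open and $\mathcal{M}_\lambda$ is dense in $I\supseteq I'$ (every $x\in I'$ is a limit of points of $\mathcal{M}_\lambda$, which eventually lie in the open set $I'$), the set $\mathcal{M}_\lambda\cap I'$ is dense in $I'$, and $g_\lambda\in\mathcal{C}^2(I')$.

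The crucial step is the inclusion $\mathcal{M}_\lambda\cap I'\subseteq\{x\in I':g_\lambda''(x)=0\}$. Indeed, if $g_\lambda$ changes its curvature at $x_0\in I'$ — meaning, in the sense needed for the Van der Corput argument behind \eqref{Eq_Char_fct_Cancelation}, that the restriction of $g_\lambda$ to no neighbourhood of $x_0$ is convex or concave — then $g_\lambda''$ cannot be of one sign near $x_0$, hence takes values of both signs in every neighbourhood of $x_0$, and continuity forces $g_\lambda''(x_0)=0$. Now $\{g_\lambda''=0\}$ is relatively closed in $I'$ (by continuity of $g_\lambda''$ on $I'$) and contains the dense subset $\mathcal{M}_\lambda\cap I'$, hence equals all of $I'$; thus $g_\lambda$ is affine on $I'$, so it is both convex and concave there and therefore changes curvature at no point of $I'$, i.e. $\mathcal{M}_\lambda\cap I'=\emptyset$. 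This contradicts the density of $\mathcal{M}_\lambda\cap I'$ in the non-degenerate interval $I'$, which proves the proposition.

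I do not anticipate a genuine obstacle. The two points deserving care are fixing the precise meaning of ``changes its curvature'' (the neighbourhood-wise formulation above is the one that is actually used afterwards, and one should check it is unaffected by passing between $g_\lambda$ and its restrictions to subintervals) and verifying that the exceptional set $D$ cannot contribute to a dense $\mathcal{M}_\lambda$ — which is immediate, since a set without accumulation points is nowhere dense. It is precisely to secure this last point that the paper restricts attention to Orlicz functions that are $\mathcal{C}^2$ off a discrete set, rather than to arbitrary convex functions, whose second-order non-smoothness set could be dense.
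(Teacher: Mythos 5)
Your proof is correct and follows essentially the same route as the paper: pass to a subinterval $J$ where $V,W\in\mathcal{C}^2$, then derive a contradiction from density of inflection points together with continuity of $V''+\lambda W''$. You are somewhat more explicit at the final step — the paper compresses ``dense zeros of the continuous $g_\lambda''$ force $g_\lambda''\equiv 0$, hence $g_\lambda$ is affine and has no inflection points'' into the terser assertion that $V''+\lambda W''$ ``is not continuous on $J$'' — but the underlying argument is the same.
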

\begin{proof}
    The proof is done by contradiction. We assume there exists a $\bar{\lambda} \in \R$ and a non-trivial interval $I \subseteq \R$ such that the set of inflection points of $x \mapsto V(x) + \bar{\lambda} W(x)$ is dense in $I$. Then, by our definition of an Orlicz function, we find a non-trivial subinterval $J \subseteq I$, where both $V$ and $W$ are $\mathcal{C}^2$. On $J$, the set of inflection points of $V+ \bar{\lambda} W$ is still dense and thus $V'' + \bar{\lambda} W''$ is not continuous on $J$, which is an immediate contradiction.  
\end{proof}

\begin{proposition}
    \label{Prop_Aux_3}
    Let $V,W:\R \rightarrow [0, \infty)$ be two Orlicz functions or let $V(x)=|x|^p$ and $W(x)=|x|^q$ with $p,q \in (0,1)$ and $p \neq q$. Then, there exists a compact interval $I \subseteq \R$, some $\delta \in (0, \infty)$ and a family of intervals $(J_{\lambda})_{\lambda \in [-1,1]}$ with $J_{\lambda} \subseteq I$ for all $\lambda \in [-1,1]$ and $\inf_{\lambda \in [-1,1]} | J_{\lambda}| \geq \delta $ such that, for $\lambda \in [-1,1]$, the function 
    \begin{equation*}
        x \mapsto V(x) + \lambda W(x)
    \end{equation*}
    is $\mathcal{C}^2$, strictly monotone and does not change its curvature on $J_{\lambda}$.
\end{proposition}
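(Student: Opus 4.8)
The plan is to prove a \emph{local, robust} version of the statement at each fixed parameter value and then to glue these together by compactness of $[-1,1]$; the compact interval $I$ of the statement is only produced at the very end, as the convex hull of finitely many intervals.

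First I fix $\lambda_0\in[-1,1]$ and write $h_{\lambda_0}:=V+\lambda_0 W$. Since $h_{\lambda_0}(0)=0$, linear independence of $V$ and $W$ forces $h_{\lambda_0}$ to be non-constant. Let $\Omega$ be the (open, dense) set of points near which both $V$ and $W$ are $\mathcal{C}^2$; as $h_{\lambda_0}$ is continuous and non-constant, $h_{\lambda_0}'$ does not vanish identically on $\Omega$ (otherwise $h_{\lambda_0}$ would be constant), so $G:=\{x\in\Omega:\,h_{\lambda_0}'(x)\neq 0\}$ is open and non-empty. By Proposition \ref{Prop_Aux_1}, $\overline{\mathcal{M}_{\lambda_0}}$ has empty interior, hence cannot contain $G$; choosing $x^{\ast}\in G\setminus\overline{\mathcal{M}_{\lambda_0}}$ and passing to a compact subinterval around $x^{\ast}$ yields a compact interval $A$ on which $V$ and $W$ are $\mathcal{C}^2$ (so $V',W',V'',W''$ are bounded), $|h_{\lambda_0}'|\ge c_1>0$, and $h_{\lambda_0}''=V''+\lambda_0 W''$ has constant weak sign, say $h_{\lambda_0}''\ge 0$ on $A$ (the other sign being symmetric). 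Strict monotonicity is then robust: for $\lambda$ close to $\lambda_0$, $h_\lambda'=h_{\lambda_0}'+(\lambda-\lambda_0)W'$ still satisfies $|h_\lambda'|\ge c_1-|\lambda-\lambda_0|\sup_A|W'|>0$ on $A$.

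The delicate point is to make \emph{absence of curvature change} robust in $\lambda$, and here there are two cases. If $h_{\lambda_0}''>0$ somewhere on $A$, then $h_{\lambda_0}''\ge c_2>0$ on a compact subinterval $A'\subseteq A$, and $h_\lambda''=h_{\lambda_0}''+(\lambda-\lambda_0)W''\ge c_2-|\lambda-\lambda_0|\sup_{A'}|W''|>0$ on $A'$ for $\lambda$ near $\lambda_0$, so $V+\lambda W$ stays strictly convex on $A'$. Otherwise $h_{\lambda_0}''\equiv 0$ on $A$, i.e.\ $V''=-\lambda_0 W''$ there, so $h_\lambda''=(\lambda-\lambda_0)W''$ on $A$; since $W''$ is continuous on $A$ it has constant weak sign on some subinterval $A'\subseteq A$ (if both $\{W''>0\}$ and $\{W''<0\}$ had empty interior in $A$, the closed set $\{W''=0\}$ would be dense, hence all of $A$), and then $h_\lambda''$ keeps a constant weak sign on $A'$ for \emph{every} $\lambda\in[-1,1]$. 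In both cases we obtain a compact interval $J_{\lambda_0}:=A'$ of positive length and a relative neighbourhood $U_{\lambda_0}\ni\lambda_0$ in $[-1,1]$ such that, for all $\lambda\in U_{\lambda_0}$, the function $V+\lambda W$ is $\mathcal{C}^2$, strictly monotone, and does not change curvature on $J_{\lambda_0}$.

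Finally, $\{U_{\lambda_0}\}_{\lambda_0\in[-1,1]}$ covers the compact set $[-1,1]$, so one extracts a finite subcover $U_{\lambda_1},\dots,U_{\lambda_N}$; setting $I:=[\min_i\inf J_{\lambda_i},\ \max_i\sup J_{\lambda_i}]$, $\delta:=\min_i|J_{\lambda_i}|>0$, and $J_\lambda:=J_{\lambda_i}$ for any $i$ with $\lambda\in U_{\lambda_i}$, all requirements of the proposition are met. The main obstacle is precisely the $\lambda$-dependence of the curvature: one cannot fix a single interval with $V''$ and $W''$ of constant sign and hope that $V''+\lambda W''$ keeps a sign (for $\lambda<0$ the two curvatures can cancel and create new inflection points), which is why the argument must be carried out pointwise in $\lambda$ and then glued; and within this, the degenerate subcase where $V+\lambda_0 W$ is affine on the chosen interval is the one requiring the extra bookkeeping with the sign of $W''$. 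The rest (boundedness of the relevant derivatives on compact intervals, and the various shrinkings of intervals) is routine.
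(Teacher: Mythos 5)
Your proof is correct, and it takes a genuinely different route from the paper's. The paper works globally from the outset: it first fixes a compact interval $I$ on which $V+\lambda W$ is non-constant for \emph{every} $\lambda\in[-1,1]$, then defines $J_\lambda'$ (resp.\ $J_\lambda$) directly as the largest subinterval of $I$ on which $V+\lambda W$ is strictly monotone (resp.\ has constant curvature), and obtains the uniform lower bound $\delta$ via two sequential compactness (contradiction) arguments — taking $\lambda_{n}$ with $|J_{\lambda_{n}}'|\to 0$, extracting a convergent subsequence, and deducing that $V+\bar\lambda W$ is constant on $I$. You instead prove a local, quantitatively robust version of the claim near each fixed $\lambda_0$ (finding one good compact interval $A'$ together with a parameter neighbourhood $U_{\lambda_0}$ on which monotonicity and curvature constancy persist, treating separately the strictly-convex and affine subcases), and then glue via a finite subcover of $[-1,1]$, producing $I$ and $\delta$ only at the end. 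Both arguments use Proposition~\ref{Prop_Aux_1} at the same place (to discard the closure of the inflection set), and both implicitly use linear independence of $V$ and $W$. What your version buys is that the uniformity in $\lambda$ is established explicitly: the paper's sentence ``By construction, $V+\bar\lambda W$ is constant on $I$'' compresses a semicontinuity argument (essentially the one you make explicit by locating a point where $|h_{\lambda_0}'|$ is bounded away from zero and then perturbing), and the second step likewise leaves the passage from Proposition~\ref{Prop_Aux_1} to a uniform $\delta$ to the reader. You also isolate the genuine difficulty — that one cannot fix a single interval of definite curvature sign for $V''$ and $W''$ and hope $V''+\lambda W''$ keeps a sign for all $\lambda\in[-1,1]$, which is precisely why $J_\lambda$ must be allowed to depend on $\lambda$ — and you handle the degenerate affine case ($h_{\lambda_0}''\equiv 0$ on $A$) carefully, which the paper does not mention explicitly. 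The trade-off is length: the paper's argument is shorter, at the cost of leaving more to the reader.
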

\begin{proof}
First we show that there exists a compact interval $I \subseteq \R$ and some $\delta' \in (0, \infty)$ such that, for all $\lambda \in [-1,1]$, there exists an interval $J_{\lambda}' \subseteq I$ with $x \mapsto V(x) + \lambda W(x)$ being strictly monotone throughout $J_{\lambda}'$ and $|J_{\lambda}'| \geq \delta'$. 
We claim that there exists an interval $I \subseteq \R$ such that for all $\lambda \in [-1,1]$, the function $V + \lambda W$ is not constant on $I$. If such an interval $I$ would not exist, for every $M \in (0,\infty)$, we would find a $\lambda \in [-1,1]$ such that $ V + \lambda W$ is constant (and hence $0$) on $[-M, M]$, which is a contradiction to the linear independence of $V$ and $W$. Since, for every $\lambda \in [-1,1] $, the function $V+ \lambda W$ is not constant on $I$, we can define $ J_{\lambda}' \subseteq I$ to be the largest subinterval of $I$ such that $V+ \lambda W$ is either strictly monotone increasing on $J_{\lambda}'$ or strictly monotone decreasing on $J_{\lambda}'$. Moreover, we claim that there exists a $\delta' \in (0, \infty)$ such that $\inf_{\lambda \in [-1,1]} | J_{\lambda}' | \geq \delta'$. Assume that such a positive $\delta'$ with $\inf_{\lambda \in [-1,1]} | J_{\lambda}' | \geq \delta'$ does not exist. Then, for every $n \in \N$, we find a $\lambda_n \in [-1,1]$ such that $|J_{\lambda_n}'| < \frac{1}{n}$. Since $(\lambda_n)_{n \in \N} $ is a bounded sequence, there exists a convergent subsequence $(\lambda_{n_k})_{k \in \N}$ with limit $\bar{\lambda} \in [-1,1]$. By construction, $V+ \bar{\lambda} W$ is constant on $I$ which is an immediate contradiction. Thus, there exists a $\delta' \in (0, \infty)$ with $\inf_{\lambda \in [-1,1]} | J_{\lambda}'| \geq \delta'$.  
\par{}
Now, for all $\lambda \in [-1,1]$, let $J_{\lambda} \subseteq J_{\lambda}'$ be the largest subinterval of $J_{\lambda}$ such that $x \mapsto V(x) + \lambda W(x)$ does not change its curvature throughout $J_{\lambda}$. By Proposition \ref{Prop_Aux_1} it follows that there must exist a $\delta \in (0, \delta')$ such that $\inf_{\lambda \in [-1,1]} | J_{\lambda} | \geq \delta$.
\par{}
The case where $V(x)=|x|^p$ and $W(x)=|x|^q$ can be argued similarly. 
\end{proof}

\begin{proposition}
    \label{Prop_Main_Tool}
    Let $\left( (t_n,s_n) \right)_{n \in \N} \subseteq \R^2$ be a sequence with $|t_n| \geq |s_n|$ for all $n \in \N$ and $\lim_{n \rightarrow \infty} |t_n| = \infty$. Further, let $V,W: \R \rightarrow [0, \infty)$ be two linearly independent Orlicz functions or $V(x)=|x|^p$ and $W(x)=|x|^q$ with $p, q \in (0,1)$ and $p \neq q$. Then, there exists an $\varepsilon \in (0,1)$ and some sequence $(\delta_n)_{n \in \N}$ with $\delta_n \rightarrow 0$ such that
    \begin{equation*}
     \left | \frac{1}{Z_{\alpha, \beta}}\int_{\R}  e^{\alpha V(x) + \beta W(x) } e^{i t_n V(x) + i s_n W(x)} \right | \leq 1 - \varepsilon + \delta_n, \quad n \in \N. 
    \end{equation*}
\end{proposition}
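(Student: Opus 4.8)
The plan is to reduce the full oscillatory integral to an integral over one of the "good" intervals produced by Proposition \ref{Prop_Aux_3} and then apply Van der Corput's lemma there. First I would write $\lambda_n := s_n/t_n \in [-1,1]$, so that the phase function is $t_n(V(x) + \lambda_n W(x))$ with $|t_n| \to \infty$. Let $I \subseteq \R$, $\delta > 0$ and the family $(J_\lambda)_{\lambda \in [-1,1]}$ be as in Proposition \ref{Prop_Aux_3}, so that on $J_{\lambda_n}$ the function $f_n := V + \lambda_n W$ is $\mathcal{C}^2$, strictly monotone and of constant curvature (hence convex or concave with non-vanishing derivative). Applying Proposition \ref{Prop_vanderCorput} with the amplitude $\phi(x) = e^{\alpha V(x) + \beta W(x)}/Z_{\alpha,\beta}$ (which is $\mathcal{C}^1$ with bounded derivative, uniformly over the compact $I$), we obtain a constant $C < \infty$, independent of $n$, with
\begin{equation*}
  \left| \frac{1}{Z_{\alpha,\beta}} \int_{J_{\lambda_n}} e^{\alpha V(x) + \beta W(x)} e^{i t_n V(x) + i s_n W(x)}\, dx \right| \leq \frac{C}{|t_n|} =: \delta_n' \xrightarrow{n\to\infty} 0 .
\end{equation*}

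Next I would split the triangle inequality in the obvious way. Writing $g(x) := e^{\alpha V(x) + \beta W(x)}/Z_{\alpha,\beta}$ for the (positive) density, we have
\begin{equation*}
  \left| \int_{\R} g(x) e^{i t_n V(x) + i s_n W(x)}\, dx \right|
  \leq \int_{\R \setminus J_{\lambda_n}} g(x)\, dx + \left| \int_{J_{\lambda_n}} g(x) e^{i t_n V(x) + i s_n W(x)}\, dx \right|
  = \left( 1 - \mu_{\alpha,\beta}(J_{\lambda_n}) \right) + \delta_n' .
\end{equation*}
So the proposition follows with $\delta_n := \delta_n'$ once we produce an $\varepsilon \in (0,1)$ with $\mu_{\alpha,\beta}(J_{\lambda_n}) \geq \varepsilon$ for all $n$. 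This is where the uniformity in Proposition \ref{Prop_Aux_3} is essential: each $J_{\lambda_n}$ is a subinterval of the fixed compact interval $I$ of length at least $\delta > 0$. Since $g$ is continuous and strictly positive on the compact set $I$, it is bounded below there by some $c_0 = c_0(I,\alpha,\beta) > 0$, whence $\mu_{\alpha,\beta}(J_{\lambda_n}) \geq c_0 \, |J_{\lambda_n}| \geq c_0 \delta =: \varepsilon > 0$; and $\varepsilon < 1$ since $\mu_{\alpha,\beta}$ is a probability measure and $I$ is bounded. This gives the claimed bound $1 - \varepsilon + \delta_n$.

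The main obstacle is not in the estimate above but in invoking Proposition \ref{Prop_vanderCorput} legitimately and uniformly: the interval $J_{\lambda_n}$ depends on $n$ through $\lambda_n$, so one must check that the Van der Corput constant can be taken independent of $n$. This is fine because the constant in Proposition \ref{Prop_vanderCorput} is universal and the amplitude bound $\max_{x \in I}|\phi(x)| + \int_I |\phi'(x)|\,dx$ is a single finite quantity not depending on $\lambda_n$ (it is controlled on the fixed compact $I$). A secondary subtlety is that Proposition \ref{Prop_vanderCorput} as stated requires $f$ convex or concave with $|f'| > 0$ throughout the \emph{open} interval; since $J_{\lambda_n}$ is chosen with constant curvature and strict monotonicity, passing to its interior (losing nothing in measure) supplies exactly these hypotheses. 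One should also record that the case $|t_n| = 0$ for some $n$ is vacuous (the sequence tends to infinity), and that the degenerate possibility $t_n = s_n = 0$ cannot occur along the tail; finitely many initial terms are absorbed into $(\delta_n)$. Finally, the polynomial case $V(x)=|x|^p$, $W(x)=|x|^q$ with $p,q \in (0,1)$ is handled identically, Proposition \ref{Prop_Aux_3} already covering it.
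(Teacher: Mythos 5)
Your proof is correct and follows exactly the same route as the paper: reduce to a phase $t_n(V+\lambda_n W)$ with $\lambda_n=s_n/t_n\in[-1,1]$, apply Proposition \ref{Prop_Aux_3} to get a uniform family of good intervals $J_{\lambda_n}\subseteq I$ of length $\geq\delta$, use Van der Corput on $J_{\lambda_n}$ for the $O(1/|t_n|)$ contribution, and bound the complement by $1-\varepsilon$. The only difference is that you explicitly justify the existence of $\varepsilon$ via the lower bound $\mu_{\alpha,\beta}(J_{\lambda_n})\geq c_0\delta$ from positivity of the density on the compact $I$, a detail the paper leaves implicit; otherwise the arguments coincide.
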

\begin{proof}
    We prove this proposition by applying Van der Corput's lemma (see Proposition \ref{Prop_vanderCorput}). For $n \in \N$, define $\lambda_n := \frac{s_n}{t_n}$ (we can assume that $t_n \neq 0$ for all $n \in \N$, since $\lim_{n \rightarrow \infty} |t_n| = \infty$). Then $\lambda_n \in [-1,1]$ for all $n \in \N$ by assumption. By Proposition \ref{Prop_Aux_3}, there exists a $\delta \in (0,\infty)$, a compact interval $I \subseteq \R$ and a sequence of intervals $(J_{\lambda_n})_{n \in \N}$ with $\inf_{n \in \N} | J_{\lambda_n}| \geq \delta $ and $J_{\lambda_n} \subseteq I$ for all $n \in \N$ such that the function
    \begin{equation*}
        x \mapsto V(x) + \lambda_n W(x)
    \end{equation*}
    is $\mathcal{C}^2$, strictly monotone and does not change its curvature on $J_{\lambda_n}$. By Proposition \ref{Prop_vanderCorput}, we thus get
    \begin{equation*}
        \left |  \frac{1}{Z_{\alpha, \beta}}\int_{J_{\lambda_n}} e^{\alpha V(x) + \beta W(x)} e^{ i t_n \left( V(x) + \lambda_n W(x) \right)} dx  \right | \leq \frac{c_n}{|t_n|},
    \end{equation*}
    where we have
    \begin{align*}
        c_n &= c \left( \max_{x \in J_{\lambda_n} } \left[  e^{\alpha V(x) + \beta W(x)} \right] + \int_{J_{\lambda_n}}\left | \alpha V'(x) + \beta W'(x)  \right | e^{\alpha V(x) + \beta W(x)} dx \right).
    \end{align*}
    The construction of $J_{\lambda_n}$ implies that all $c_n$ are uniformly bounded by some absolute constant $C \in (0, \infty)$. We thus have
    \begin{equation}
    \label{Eq_Estimate_1}
        \left |  \frac{1}{Z_{\alpha, \beta}}\int_{J_{\lambda_n}} e^{\alpha V(x) + \beta W(x)} e^{ i t_n \left( V(x) + \lambda_n W(x) \right)} dx  \right | \leq \frac{C}{|t_n|} =: \delta_n.
    \end{equation}
    Since all intervals $J_{\lambda_n}$ are contained in a fixed compact interval $I \subseteq \R$ and all $J_{\lambda_n}$ have length of at least $\delta \in (0, \infty)$, there must exist an $\varepsilon \in (0,1)$ such that 
    \begin{equation}
    \label{Eq_Estimate_2}
     \left |  \frac{1}{Z_{\alpha, \beta}}\int_{J_{\lambda_n}^c } e^{\alpha V(x) + \beta W(x)} e^{ i t_n \left( V(x) + \lambda_n W(x) \right)} dx  \right | \leq   1- \varepsilon.
    \end{equation}
    The estimates in \eqref{Eq_Estimate_1} and \eqref{Eq_Estimate_2} now show the claim. 
\end{proof}

\begin{lem}
    \label{Lem_AuxL3}
   Let $V, W : \R \rightarrow \R $ be two linearly independent Orlicz functions or $V(x) = |x|^p$ and $W(x) = |x|^q$ with $p, q \in (0,1)$ and $p \neq q$. Let $ (\alpha , \beta) \in \R^2$ such that $ \mu_{\alpha, \beta} $ exists and let $\left( X_i \right)_{i \in \N}$ be an iid sequence with $X_1 \sim \mu_{\alpha, \beta}$. We assume that there exists a $k \in \N_{\geq 2}$ with
   \begin{equation}
   \label{Eq_kth_moment}
    \mathbb{E}_{\mu_{\alpha, \beta}} \left[ \left | \left | \left(V(X),W(X) \right) \right | \right |_2^k \right] < \infty.  
   \end{equation}
   We set $(R,t) := \left( \mathbb{E} \left[ V(X_1) \right] , \mathbb{E} \left[ W(X_1) \right] \right)$ and, for $n \in \N$, we denote the distribution function of the vector 
\begin{equation*}
\frac{1}{\sqrt{n} } \sum_{i=1}^n \left( V(X_i) - R , W(X_i) - t\right)   
\end{equation*}
as $F_n$. Then, for any $(x,y) \in \R^2$, we can write
\begin{equation*}
F_n(x,y) = G^{ \Sigma} (x,y) + \sum_{j=1}^k n^{-j/2} F_j(x,y) + R_n(x,y),
\end{equation*}
where the remainder $R_n$ satisfies $R_n(x,y)=o(n^{-k/2})$, uniformly in $(x,y) \in \R^2$. $G^{\Sigma}$ is the distribution function of a normal distribution with mean $0 \in \R^2$ and covariance matrix $\Sigma \in \R^{2 \times 2}$ given by
\begin{equation*}
    \Sigma = \Cov \left[ \left(V(X_1), W(X_1) \right) \right].
\end{equation*}
Moreover, the functions $F_j$, for $j = 1, \ldots, k$, can be computed via
\begin{equation*}
F_j(x,y) = \int_{- \infty}^x \int_{-\infty}^y g^{\Sigma}(t,s) \pi_j(t,s) dt ds, \quad (x,y) \in \R^2,
\end{equation*}
where $g^{\Sigma}$ is the density of $G^{\Sigma}$ and $\pi_j$ is a polynomial in two variables of degree at most $3 j$. 
\end{lem}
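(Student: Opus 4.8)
The plan is to deduce Lemma~\ref{Lem_AuxL3} from a standard multivariate Edgeworth expansion, such as the one in \cite{bhattacharya2010} (cf.\ Theorem~20.1 there), which says the following: if $Y_1,Y_2,\dots$ are i.i.d.\ $\R^2$-valued random vectors with $\mathbb{E}[Y_1]=0$, with $\Sigma:=\Cov(Y_1)$ positive definite, with $\mathbb{E}\|Y_1\|_2^k<\infty$, and if Cramér's condition $\limsup_{\|\xi\|_2\to\infty}\bigl|\mathbb{E}\,e^{i\langle\xi,Y_1\rangle}\bigr|<1$ holds, then the distribution function $F_n$ of $n^{-1/2}\sum_{i=1}^nY_i$ admits, uniformly in $(x,y)\in\R^2$, the expansion $F_n=G^\Sigma+\sum_{j}n^{-j/2}F_j+R_n$ with $R_n=o(n^{-k/2})$, where $G^\Sigma$ is the Gaussian distribution function with covariance $\Sigma$ and each $F_j(x,y)=\int_{-\infty}^x\!\int_{-\infty}^y g^\Sigma(t,s)\pi_j(t,s)\,dt\,ds$ for an explicit polynomial $\pi_j$ of degree at most $3j$; this polynomial-times-Gaussian-density structure is the usual consequence of the Hermite identities applied to the formal cumulant expansion. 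Applying this with $Y_i:=(V(X_i),W(X_i))$ reduces the lemma to verifying the three hypotheses, of which only Cramér's condition is non-routine here --- precisely because the law of $Y_1$ is carried by the one-dimensional curve $\mathcal{M}$ and so has no component absolutely continuous with respect to planar Lebesgue measure.

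Cramér's condition for $Y_1$ is exactly the statement \eqref{Eq_Char_fct_Cancelation}, which I would establish as follows. Write $\varphi(t,s)=\mathbb{E}_{\mu_{\alpha,\beta}}[e^{itV(X)+isW(X)}]$ for the characteristic function of $Y_1$. Given any sequence $\bigl((t_n,s_n)\bigr)_n$ with $\|(t_n,s_n)\|_2\to\infty$, split $\N$ according to whether $|t_n|\ge|s_n|$ or $|s_n|>|t_n|$. Along the first set, Proposition~\ref{Prop_Main_Tool} yields $\varepsilon\in(0,1)$ and $\delta_n\to0$ with $|\varphi(t_n,s_n)|\le1-\varepsilon+\delta_n$. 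Along the second set, the analogous bound (with some $\varepsilon'\in(0,1)$) follows by repeating the proof of Proposition~\ref{Prop_Main_Tool} with the roles of $V$ and $W$ interchanged in the phase, using $t_nV+s_nW=s_n\bigl(W+(t_n/s_n)V\bigr)$; this is legitimate since Propositions~\ref{Prop_Aux_1} and~\ref{Prop_Aux_3} are symmetric in $V$ and $W$, while the weight $e^{\alpha V+\beta W}$ enters Van der Corput's estimate only as a $\mathcal{C}^1$ amplitude that is, together with its derivative, bounded on a fixed compact interval. Choosing $\varepsilon_0:=\min\{\varepsilon,\varepsilon'\}$ and letting $n\to\infty$ gives $\limsup_{\|(t,s)\|_2\to\infty}|\varphi(t,s)|\le1-\varepsilon_0<1$, which is \eqref{Eq_Char_fct_Cancelation}. (As a by-product, $\limsup<1$ excludes any lattice behaviour, so $|\varphi(t,s)|<1$ for all $(t,s)\ne(0,0)$; equivalently this follows from continuity of $x\mapsto tV(x)+sW(x)$ on $\supp(\mu_{\alpha,\beta})=\R$ together with the linear independence of $V$ and $W$.)

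The two remaining hypotheses are immediate. The moment bound $\mathbb{E}_{\mu_{\alpha,\beta}}\|(V(X),W(X))\|_2^k<\infty$ is the assumption \eqref{Eq_kth_moment}. Positive-definiteness of $\Sigma=\Cov_{\mu_{\alpha,\beta}}[(V(X),W(X))]$ holds because, were the quadratic form $(a,b)\,\Sigma\,(a,b)^{\top}=\mathbb{V}_{\mu_{\alpha,\beta}}[aV(X)+bW(X)]$ to vanish for some $(a,b)\ne(0,0)$, then $aV+bW$ would be $\mu_{\alpha,\beta}$-a.s.\ constant, hence $\equiv0$ (it vanishes at $0$ and $\supp(\mu_{\alpha,\beta})=\R$), contradicting the linear independence of $V,W$ --- this is the argument already used for $H_\varphi$ in the proof of Lemma~\ref{Lem_AuxL2}. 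With all three hypotheses in place, the cited Edgeworth theorem applies and delivers the assertion of the lemma, degree bound included. The one genuine obstacle is Cramér's condition \eqref{Eq_Char_fct_Cancelation}; but since nearly all of that work has been front-loaded into Propositions~\ref{Prop_Aux_1}, \ref{Prop_Aux_3} and~\ref{Prop_Main_Tool}, within the proof of Lemma~\ref{Lem_AuxL3} it only remains to assemble these, invoke the $V\leftrightarrow W$ symmetry, and carry out the two elementary verifications above.
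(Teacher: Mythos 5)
Your proposal is correct and follows essentially the same route as the paper: reduce to the standard multivariate Edgeworth theorem in \cite{bhattacharya2010}, check the moment and positive-definiteness hypotheses (the latter via linear independence of $V,W$), and establish Cram\'er's condition by splitting a sequence $(t_n,s_n)$ with $\|(t_n,s_n)\|_2\to\infty$ into the two regimes $|t_n|\ge|s_n|$ and $|s_n|>|t_n|$, applying Proposition~\ref{Prop_Main_Tool} along each. The only difference is cosmetic: you make explicit the $V\leftrightarrow W$ symmetry needed to handle the regime $|s_n|>|t_n|$ and spell out why $\Sigma$ is positive definite, both of which the paper leaves implicit.
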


\begin{proof}
Since $V,W$ are linearly independent, the covariance matrix $\Sigma := \Cov \left(V(X_1), W(X_1) \right)\in \R^{2 \times 2}$ (which exists by \eqref{Eq_kth_moment}) is positive definite. Thus, the result is a consequence of the elaborations in \cite[Chapter 2]{bhattacharya2010}, provided the assumption
\begin{equation*}
% \label{Eq_CharacFunc}
     \limsup_{||(t,s)||_2 \rightarrow \infty} \left | \mathbb{E}_{\mu_{\alpha, \beta }} \left[ e^{i t V(X) + is W(X)}\right] \right | < 1
 \end{equation*}
is satisfied. To that end, let $ \left( (t_n, s_n) \right)_{n \in \N}$ be a sequence of real numbers with $ || (t_n, s_n) ||_2 \rightarrow \infty$ as $n \rightarrow \infty$, where we need to show that
\begin{equation}
\label{Eq_discrete_Char_fct}
  \limsup_{n \rightarrow \infty} \left | \mathbb{E}_{\mu_{\alpha, \beta }} \left[ e^{i t_n V(X) + is_n W(X)}\right] \right | < 1 .  
\end{equation}

We define two sequences of natural numbers $(n_k)_{k \in \N}$ and $(m_k)_{k \in \N}$ recursively (we set $n_0:=m_0:=0$)
\begin{equation*}
n_k:= \min \left \{ n \in \N : n > n_{k-1} , |s_n| \leq |t_n|  \right\} \qquad m_k:= \min \left \{ n \in \N : n > m_{k-1} , |s_n| > |t_n|  \right\}, \quad \text{for } k \in \N.
\end{equation*}
Then, by construction, $(n_k)_{k \in \N}$ and $(m_k)_{k \in \N}$ form a partition of $\N$. In the following, we only consider the case where both those sequences tend to infinity. The limit in \eqref{Eq_discrete_Char_fct} can then be written as
\begin{align*}
  \limsup_{n \rightarrow \infty} \left | \mathbb{E}_{\mu_{\alpha, \beta }} \left[ e^{i t_n V(X) + is_n W(X)}\right] \right | &= \limsup_{k \rightarrow \infty} \max \left \{ \left | \mathbb{E}_{\mu_{\alpha, \beta }} \left[ e^{i t_{n_k} V(X) + is_{n_k} W(X)}\right] \right | , \left | \mathbb{E}_{\mu_{\alpha, \beta }} \left[ e^{i t_{m_k} V(X) + is_{m_k} W(X)}\right] \right |\right\}.  \\
\end{align*}
By Proposition \ref{Prop_Aux_3}, we find $\varepsilon, \varepsilon' \in (0, 1)$ and two sequences $(\delta_k)_{k \in \N}$ and $(\delta'_k)_{k \in \N}$ with $\delta_k \rightarrow 0, \delta_k' \rightarrow 0$ such that
\begin{align*}
 \limsup_{k \rightarrow \infty} \max \left \{ \left | \mathbb{E}_{\mu_{\alpha, \beta }} \left[ e^{i t_{n_k} V(X) + is_{n_k} W(X)}\right] \right | , \left | \mathbb{E}_{\mu_{\alpha, \beta }} \left[ e^{i t_{m_k} V(X) + is_{m_k} W(X)}\right] \right |\right\} &=    
 \limsup_{k \rightarrow \infty} \max \left \{ 1- \varepsilon + \delta_k , 1- \varepsilon' + \delta_k' \right\} \\
 & = 1 - \min \{\varepsilon, \varepsilon' \} < 1.
\end{align*}
This shows the desired result. 

\end{proof}

% % % % % % % % % % % % % % % % % % % % % % % % %
\subsection{Proofs of the Theorems}
% % % % % % % % % % % % % % % % % % % % % % % % %

We start by proving a general lemma needed in several of the upcoming proofs.

\begin{lem}
    \label{Lem_Asympt_Int}
    Let the assumptions of Lemma \ref{Lem_AuxL3} be satisfied for a pair $(\alpha, \beta) \in (- \infty, 0) \times  (0,\infty)$. Then, for the sequence of distribution functions $(F_n)_{n \in \N}$, we have
    \begin{equation*}
    \int_{x=0}^{\infty} \int_{y=-\infty}^0 e^{ \sqrt{n} \alpha x + \sqrt{n} \beta y} d F_n(x,y) = \int_{x=0}^{\infty} \int_{y=-\infty}^0 e^{ \sqrt{n} \alpha x + \sqrt{n} \beta y} d G^{\Sigma}(x,y) + o(n^{-1}).
    \end{equation*}
If $\beta=0$ in the integral above and $t \in \R$, then we get
\begin{equation*}
 \int_{x=0}^{\infty} \int_{y=-\infty}^t e^{ \sqrt{n} \alpha x + \sqrt{n} \beta y} d F_n(x,y) = \int_{x=0}^{\infty} \int_{y=-\infty}^t e^{ \sqrt{n} \alpha x} d G^{\Sigma}(x,y) + O(n^{-1}),   
\end{equation*}
where the implicit constant in the $O$-term is independent of $t$.
\end{lem}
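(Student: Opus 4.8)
The plan is to substitute the Edgeworth expansion of Lemma~\ref{Lem_AuxL3} into both integrals and to estimate the resulting pieces. Fix the integer $k \geq 2$ supplied by the hypothesis, so that $F_n = G^{\Sigma} + \sum_{j=1}^{k} n^{-j/2} F_j + R_n$ with $\rho_n := \sup_{(x,y) \in \R^2} |R_n(x,y)| = o(n^{-k/2})$. Subtracting the two integrals in the statement, their difference equals $\sum_{j=1}^{k} n^{-j/2} I_j^{(n)} + J^{(n)}$, where $I_j^{(n)} := \int_{0}^{\infty}\!\int_{-\infty}^{0} e^{\sqrt n \alpha x + \sqrt n \beta y}\, dF_j(x,y)$ and $J^{(n)} := \int_{0}^{\infty}\!\int_{-\infty}^{0} e^{\sqrt n \alpha x + \sqrt n \beta y}\, dR_n(x,y)$. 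Hence it suffices to prove $I_j^{(n)} = O(n^{-1})$ for each $j$ and $J^{(n)} = O(\rho_n)$: the difference is then $\sum_{j=1}^{k} O(n^{-1-j/2}) + O(\rho_n) = o(n^{-1})$, because $j \geq 1$ and $k \geq 2$.

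For the correction terms, recall from Lemma~\ref{Lem_AuxL3} that $dF_j(x,y) = g^{\Sigma}(x,y)\, \pi_j(x,y)\, dx\, dy$, where $\pi_j$ is a polynomial of degree at most $3j$. Since $\Sigma$ is positive definite, $g^{\Sigma}(x,y) \leq g^{\Sigma}(0,0)$ for all $(x,y)$, and on the quadrant $\{x \geq 0,\, y \leq 0\}$ the quantity $|\pi_j(x,y)|$ is dominated by a polynomial in $x$ and $|y|$ with non-negative coefficients. Integrating term by term via the elementary identities $\int_{0}^{\infty} e^{-\sqrt n |\alpha| x} x^{a}\, dx = a!\,(\sqrt n |\alpha|)^{-(a+1)}$ and $\int_{-\infty}^{0} e^{\sqrt n \beta y} |y|^{b}\, dy = b!\,(\sqrt n \beta)^{-(b+1)}$ then gives $|I_j^{(n)}| \ll n^{-1}$, the leading contribution coming from the constant term of $\pi_j$.

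The term $J^{(n)}$ is the delicate one, since $R_n$ need be neither absolutely continuous nor a positive measure. Here I would use the identity
\[
 e^{\sqrt n \alpha x + \sqrt n \beta y} = n |\alpha| \beta \int_{x}^{\infty} \int_{-\infty}^{y} e^{\sqrt n \alpha u + \sqrt n \beta v}\, dv\, du, \qquad (x,y) \in \R^2 ,
\]
and interchange the order of integration against $dR_n$; this is legitimate by Fubini's theorem, since $R_n$ induces a finite signed Borel measure $\Lambda_n$ with $\Lambda_n((-\infty,x] \times (-\infty,y]) = R_n(x,y)$ (namely the law of the normalised sum minus the $\mathcal{N}(0,\Sigma)$ law minus the finite signed measures with densities $g^{\Sigma}\pi_j$), and $e^{\sqrt n \alpha x + \sqrt n \beta y}$ is integrable over the quadrant. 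As $u \geq x \geq 0$ and $v \leq y \leq 0$ in the resulting integral, this produces
\[
 J^{(n)} = n |\alpha| \beta \int_{0}^{\infty} \int_{-\infty}^{0} e^{\sqrt n \alpha u + \sqrt n \beta v}\, \Lambda_n\big([0,u] \times [v,0]\big)\, dv\, du .
\]
Writing $\Lambda_n([0,u] \times [v,0])$ as the alternating sum of the four corner evaluations of $R_n$ (with one-sided limits where needed), each of modulus at most $\rho_n$, and using $n |\alpha| \beta \int_{0}^{\infty}\!\int_{-\infty}^{0} e^{\sqrt n \alpha u + \sqrt n \beta v}\, dv\, du = 1$, we obtain $|J^{(n)}| \leq 4 \rho_n = o(n^{-k/2})$. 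This is the ``integration by parts against a smooth, rapidly concentrating weight'' estimate made rigorous, and it is the only step where the lack of smoothness of $F_n$ intervenes.

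For the second assertion, where $\beta = 0$ and the $y$-integration runs up to an arbitrary $t \in \R$, the integrand $e^{\sqrt n \alpha x}$ is independent of $y$, so integrating $y$ out over $(-\infty,t]$ replaces $F_n, G^{\Sigma}, F_j, R_n$ by $F_n(\cdot,t), G^{\Sigma}(\cdot,t), F_j(\cdot,t), R_n(\cdot,t)$. Running the one-dimensional version of the argument above, now with $e^{\sqrt n \alpha x} = \sqrt n |\alpha| \int_{x}^{\infty} e^{\sqrt n \alpha u}\, du$, yields $\big| \int_{0}^{\infty} e^{\sqrt n \alpha x}\, dR_n(x,t) \big| \leq 2 \rho_n$ uniformly in $t$. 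For the corrections, $dF_j(\cdot,t)$ has density $q_j(x,t) = \int_{-\infty}^{t} g^{\Sigma}(x,s)\, \pi_j(x,s)\, ds$, and conditioning the Gaussian on its first coordinate gives $|q_j(x,t)| \leq \int_{\R} g^{\Sigma}(x,s) |\pi_j(x,s)|\, ds \leq p_j(|x|)\, e^{-x^2/(2 \Sigma_{11})}$ for a polynomial $p_j$ independent of $t$; hence $\big| \int_{0}^{\infty} e^{\sqrt n \alpha x}\, dF_j(x,t) \big| \ll n^{-1/2}$ uniformly in $t$, and multiplication by $n^{-j/2}$ gives $O(n^{-1})$ for $j = 1$ and $o(n^{-1})$ for $j \geq 2$. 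Summing over $j$, the total error is $O(n^{-1})$ with an implicit constant not depending on $t$, as claimed. The main obstacle throughout is the uniform treatment of the remainder $R_n$, which is precisely what the Fubini identity above delivers.
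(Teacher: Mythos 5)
Your proof is correct and follows the paper's overall strategy: insert the Edgeworth expansion from Lemma~\ref{Lem_AuxL3}, bound each correction term $n^{-j/2}F_j$ via a polynomial-times-Gaussian estimate (giving $O(n^{-1-j/2})$ per term), and control the remainder. Where you genuinely improve on the paper is the remainder bound. The paper's proof asserts
\[
\int_{x=0}^{\infty}\int_{y=-\infty}^{0} e^{\sqrt{n}\alpha x+\sqrt{n}\beta y}\,dR_n(x,y)\;\leq\; R_n\bigl((-\infty,0)\times(0,\infty)\bigr)\;=\;o(n^{-1}),
\]
which is informal: an integral against a signed measure is not bounded by a value of the associated distribution function over a region even when the integrand is bounded by $1$ (one would need a total-variation bound), and the region written is in any case not the quadrant of integration. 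Your identity $e^{\sqrt{n}\alpha x+\sqrt{n}\beta y}=n|\alpha|\beta\int_{x}^{\infty}\int_{-\infty}^{y}e^{\sqrt{n}\alpha u+\sqrt{n}\beta v}\,dv\,du$, followed by Fubini, converts $J^{(n)}$ into an average of rectangle masses $\Lambda_n([0,u]\times[v,0])$, each of modulus at most $4\rho_n$ by the $\sup$-norm control of $R_n$, while the weight integrates to $1$; this yields $|J^{(n)}|\leq 4\rho_n=o(n^{-k/2})$ using exactly the kind of uniform estimate Lemma~\ref{Lem_AuxL3} actually supplies. Your treatment of the $\beta=0$ case, with the one-dimensional version of the same identity and the check that all bounds hold uniformly in $t$, likewise matches the paper's argument but makes the remainder step rigorous in the same way.
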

\begin{proof}
We start with the first case, where $(\alpha, \beta) \in (- \infty, 0) \times (0, \infty)$.
Since $\mu_{\alpha, \beta}$ exists for a pair $(\alpha, \beta) \in (- \infty, 0) \times (0, \infty)$, we have that
\begin{equation*}
  \mathbb{E}_{\mu_{\alpha, \beta}} \left[ \left | \left | \left(V(X),W(X) \right) \right | \right |_2^2 \right] < \infty,   
\end{equation*}
and thus we can write 
\begin{equation*}
    F_n(x,y)= G^{\Sigma}(x,y) + \sum_{j=1}^2 n^{-j/2}F_j(x,y) + R_n(x,y),
\end{equation*}
with $R_n(x,y)=o(n^{-1})$, uniformly in $(x,y) \in \R^2$ and the density of $F_j$ is of the form 
$g^{\Sigma}(x,y) \pi_j(x,y)$ with $\pi_j$ having degree $3j$.
This leads to (recall that $\alpha \in (- \infty,0)$)
\begin{align*}
    \int_{x=0}^{\infty} \int_{y=- \infty}^0 e^{ \sqrt{n} \alpha x + \sqrt{n} \beta y} d R_n(x,y) 
    & \leq R_n \left((-\infty,0) \times (0, \infty) \right) \\
    & = o(n^{-1}).
\end{align*}
There exist constants $c_1, c_2 \in (0, \infty)$ such that, for $j=1,2$, we have
\begin{equation*}
   \left | g^{\Sigma}(x,y) \pi_j(x,y) \right | \leq c_1 e^{- c_2( x^2 +y^2)}, \quad \text{for all }(x,y) \in \R^2. 
\end{equation*}
From that we infer (we denote $\gamma := \min \{|\alpha|, \beta \}$)
\begin{align*}
    \int_{x=0}^{\infty} \int_{y=- \infty}^{0} e^{ \sqrt{n} \alpha x + \sqrt{n} \beta y} d F_j(x,y)
    & \leq  \int_{x=0}^{\infty} \int_{y=- \infty}^{0} e^{ \sqrt{n} \alpha x + \sqrt{n} \beta y} c_1 e^{- c_2( x^2 +y^2)} d y dx  \\
    & \leq c_1 \left( \int_{0}^{\infty} e^{- \sqrt{n} \gamma x } e^{- c_2 x^2} dx  \right)^2 \\
    & = O \left( n^{-1} \right).
\end{align*}
This implies that
\begin{equation*}
    \sum_{j=1}^2 \int_{x=0}^{\infty} \int_{y=-\infty}^{0} e^{ \sqrt{n} \alpha x + \sqrt{n} \beta y} d \left( n^{-j/2} F_j(x,y) \right) = o(n^{-1}).
\end{equation*}
This establishes the first claim. Now let $ \alpha \in (- \infty, 0)$ and $\beta=0$. Then, again, we have
\begin{equation*}
   \mathbb{E}_{\mu_{\alpha, 0}} \left[ \left | \left | \left(V(X),W(X) \right) \right | \right |_2^2 \right] < \infty, 
\end{equation*}
allowing us to write
\begin{equation*}
   F_n(x,y)= G^{\Sigma}(x,y) +  n^{-1/2}F_1(x,y) + R_n(x,y),
\end{equation*}
with $R_n(x,y)=O(n^{-1})$, uniformly in $(x,y) \in \R^2$ and the density of $F_j$ is of the form 
$g^{\Sigma}(x,y) \pi_1(x,y)$ with $\pi_1$ having degree $3$ and $g^{\Sigma}$ is the density of a bivariate normal distribution with mean $0$ and covariance matrix $\Sigma \in \R^{2 \times 2}$. As before, we get
\begin{align*}
    \int_{x=0}^{\infty}\int_{y=- \infty}^t e^{\alpha \sqrt{n}x} dF_n(x,y) 
    &= \int_{x=0}^{\infty}\int_{y=- \infty}^t e^{\alpha \sqrt{n}x} dG^{\Sigma}(x,y) + \int_{x=0}^{\infty}\int_{y=- \infty}^t e^{\alpha \sqrt{n}x} d \left( n^{-1/2} F_1(x,y) \right) + O(n^{-1}).
\end{align*}
We further find constants $c_1, c_2 \in (0, \infty)$ with
\begin{equation*}
    \left | \pi_1(x,y) g^{\Sigma} (x,y) \right | \leq c_1 e^{- c_2 ( x^2 + y^2)}.
\end{equation*}
This implies
\begin{align*}
  \int_{x=0}^{\infty}\int_{y=- \infty}^t e^{\alpha \sqrt{n}x} d \left( n^{-1/2} F_1(x,y) \right) & \ll n^{-1/2} \int_{x=0}^{\infty} e^{\alpha \sqrt{n}x} e^{-c_2 x^2} \underbrace{\int_{y=- \infty}^t e^{-c_2 y^2} d y}_{\ll 1} dx \\
  & \ll n^{-1/2} \int_{x=0}^{\infty} e^{\alpha \sqrt{n}x} dx \\
  & \ll n^{-1}.
\end{align*}
\end{proof}

The next lemma gives an estimate for the asymptotic volume of a high dimensional Orlicz ball. Such a result was first obtained in \cite[Theorem A]{KP2021}, where the error term was of order $o(1)$. For the proof of Theorem \ref{Thm_SLLN_CLT}, we need an error of order $O(n^{-1/2})$.
\begin{lem}
    \label{Lem_Vol_OB}
    Let $R \in (0, \infty)$ and $V: \R \rightarrow [0, \infty)$ be an Orlicz function. Then the asymptotic volume of $\mathbb{B}^{(n,V)}_R$ is given by
    \begin{equation*}
        \vol_{n} \left( \mathbb{B}_{R}^{(n,V)} \right) = \frac{e^{- \alpha_* n R } Z_{\alpha_*, 0}^n}{\sqrt{2 \pi n} \sigma_* | \alpha_* |} \left( 1 + O\left(n^{-1/2} \right) \right).
    \end{equation*}
\end{lem}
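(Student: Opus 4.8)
The plan is to run the Bahadur--Rao / Petrov scheme for sharp large deviations: introduce the exponential tilt that centres the constraining sum at its threshold, insert the Edgeworth expansion of Lemma~\ref{Lem_AuxL3} (packaged in Lemma~\ref{Lem_Asympt_Int}), and then evaluate the residual Gaussian integral by Laplace's method to read off the $\Theta(n^{-1/2})$ prefactor.

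First I would fix $\alpha_*\in(-\infty,0)$ to be the unique point with $\mathbb{E}_{\mu_{\alpha_*,0}}[V(X)]=R$; existence and uniqueness are exactly as in the proof of Lemma~\ref{Lem_AuxL2}, since (by Lemma~\ref{Lem_AuxL1}) $\alpha\mapsto\mathbb{E}_{\mu_{\alpha,0}}[V(X)]$ is continuous and strictly increasing and runs from $0$ (as $\alpha\to-\infty$) to $\infty$ (as $\alpha\to0^-$). I set $\sigma_*^2:=\mathbb{V}_{\mu_{\alpha_*,0}}[V(X)]\in(0,\infty)$. Multiplying and dividing the integrand $\mathbb{1}_{\{\sum_iV(x_i)\le nR\}}$ by the density of $\mu_{\alpha_*,0}^{\otimes n}$ and writing $S_n:=\frac{1}{\sqrt n}\sum_{i=1}^n\bigl(R-V(X_i)\bigr)$ for i.i.d.\ $X_1,\dots,X_n\sim\mu_{\alpha_*,0}$ (a centred sum of variance $\sigma_*^2$), the volume becomes
\begin{equation*}
\vol_{n}\bigl(\mathbb{B}_R^{(n,V)}\bigr)=e^{-\alpha_*nR}\,Z_{\alpha_*,0}^{\,n}\,\mathbb{E}_{\mu_{\alpha_*,0}^{\otimes n}}\bigl[\mathbb{1}_{\{S_n\ge0\}}\,e^{\alpha_*\sqrt n\,S_n}\bigr]=e^{-\alpha_*nR}\,Z_{\alpha_*,0}^{\,n}\int_{0}^{\infty}e^{\alpha_*\sqrt n\,x}\,dH_n(x),
\end{equation*}
with $H_n$ the distribution function of $S_n$; since $\alpha_*<0$ the exponent is nonpositive on the range of integration, so everything is finite, and it remains to show the integral equals $\bigl(\sqrt{2\pi n}\,\sigma_*|\alpha_*|\bigr)^{-1}(1+O(n^{-1/2}))$.

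Next I would bring in the Edgeworth machinery. To apply Lemma~\ref{Lem_AuxL3} I choose an auxiliary Orlicz function $W$ linearly independent of $V$ with $\mathbb{E}_{\mu_{\alpha_*,0}}[W(X)^2]<\infty$ — for instance $W(x)=x^2$ when $V$ is not a multiple of $x^2$, and $W(x)=x^4$ otherwise; the moment is finite because $\alpha_*<0$ and $V$ is coercive. Then $(\alpha_*,0)$ and $k=2$ fulfil the hypotheses of Lemma~\ref{Lem_AuxL3}, the covariance matrix $\Sigma$ of $(V(X_1),W(X_1))$ is positive definite, and $H_n$ — which, after reflecting the $V$-coordinate, is the first marginal of the distribution function of that lemma with the $W$-coordinate integrated out — inherits the corresponding Edgeworth expansion. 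The integral above is then precisely of the form treated in Lemma~\ref{Lem_Asympt_Int} (its $\beta=0$ case, with $t\to\infty$), so I may replace $H_n$ by the limiting Gaussian at the cost of an additive $O(n^{-1})$, getting $\int_{0}^{\infty}e^{\alpha_*\sqrt n\,x}\,dG^{\sigma_*^2}(x)+O(n^{-1})$ with $G^{\sigma_*^2}$ the distribution function of $\mathcal N(0,\sigma_*^2)$. Finally the substitution $x=-u/(\alpha_*\sqrt n)$ and $g^{\sigma_*^2}(0)=(\sqrt{2\pi}\,\sigma_*)^{-1}$ evaluate this Gaussian integral as $\bigl(\sqrt{2\pi n}\,\sigma_*|\alpha_*|\bigr)^{-1}(1+O(n^{-1}))$; as the main term is of order $n^{-1/2}$ while every additive error is $O(n^{-1})$, the relative error is $O(n^{-1/2})$, and assembling the pieces produces the claimed asymptotics, with $\sigma_*=\sqrt{\mathbb{V}_{\mu_{\alpha_*,0}}[V(X)]}$.

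The hard part is the middle step: one needs the Gaussian-plus-polynomial approximation of $H_n$ to survive integration against the weight $e^{\alpha_*\sqrt n\,x}$, which is exponentially decaying and effectively localises the mass to a window of width $\sim n^{-1/2}$ at the origin, so only a uniform control of the approximation error on the whole line is of any use. This is exactly what the uniformity of the Edgeworth remainder in Lemma~\ref{Lem_AuxL3} — which itself rests on the Cram\'er-type cancellation estimate \eqref{Eq_Char_fct_Cancelation} for the characteristic function — provides, together with an integration by parts that moves the weight onto the $O(n^{-1})$-small remainder, and this is precisely the content of Lemma~\ref{Lem_Asympt_Int}; the only remaining check is that an admissible auxiliary $W$ always exists, which it does.
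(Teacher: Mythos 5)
Your proposal is correct and follows essentially the same route as the paper: exponentially tilt with $\mu_{\alpha_*,0}$ so the constraint $\sum V(x_i)\le nR$ becomes a one-sided event at the mean, feed the resulting integral through the Edgeworth expansion of Lemma~\ref{Lem_AuxL3} via Lemma~\ref{Lem_Asympt_Int}, and evaluate the Gaussian integral by Laplace. One small point in your favour: Lemma~\ref{Lem_AuxL3} is stated for a pair $(V,W)$ of linearly independent Orlicz functions, and the paper's proof silently introduces a second coordinate $W$ into $F_n$ without saying where it comes from, whereas you explicitly note that one must (and can) pick an admissible auxiliary $W$ with finite moment; this is a harmless but welcome bit of bookkeeping.
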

\begin{rem}
We note that Lemma \ref{Lem_Vol_OB} is a special case of \cite[Theorem 1.1]{barthe2023volume}. Nevertheless, in the following, we present an alternative proof, resting on the methods developed in this article.
\end{rem}
\begin{proof}(Lemma \ref{Lem_Vol_OB})
We have
\begin{align*}
 \vol_{n} \left( \mathbb{B}_{R}^{(n,V)} \right) &=  \int_{\R^n}\mathbbm{1}_{ \mathbb{B}^{(n,V)}_R }(x) dx_1 \ldots dx_n \\
    &= Z_{\alpha_*, 0}^{n} \int_{\R^n}
   \mathbbm{1}_{ \left [ \sum_{i=1}^n V(x_i) \leq nR \right ]}  \frac{1}{Z_{\alpha_*, 0}^n} e^{ (\alpha_* - \alpha_*) \sum_{i=1}^n V(x_i)  } dx_1 \ldots dx_n \\ 
   &= Z_{\alpha_*, 0}^{n}e^{- n \alpha_* R } \int_{x=- \infty}^0 \int_{y =- \infty }^{\infty}  e^{  - \alpha \sqrt{n} x  } dF_n(x,y) , 
\end{align*}
where $F_n$ is the distribution function of the vector
\begin{equation*}
    \frac{1}{\sqrt{n}} \sum_{i=1}^n \left( V(X_i) - R, W(X_i) - m \right),
\end{equation*}
with $(X_i)_{i \in \N}$ being an iid sequence all distributed according to $\mu_{\alpha_*, 0}$. By Lemma \ref{Lem_AuxL3} together with Lemma \ref{Lem_Asympt_Int}, we get
\begin{align*}
    \int_{x=- \infty}^0 \int_{y =- \infty }^{\infty}  e^{  - \alpha \sqrt{n} x  } dF_n(x,y) &= \int_{x=- \infty}^0 \int_{y =- \infty }^{\infty}  e^{  - \alpha \sqrt{n} x  } d G^{\Sigma}(x,y) + O(n^{-1}) \\
    &= \frac{1}{\sqrt{2 \pi } \sigma_*} \int_{x=0}^{\infty} e^{\alpha_* \sqrt{n} x} e^{-\frac{x^2}{2 \sigma_*}}dx +O(n^{-1}) \\
    &= \frac{1}{\sqrt{2 \pi n }\sigma_*} \int_{x=0}^{\infty} e^{\alpha_*  x} e^{-\frac{x^2}{2 n\sigma_*}}dx +O(n^{-1}) \\
    &= \frac{1}{\sqrt{2 \pi n }\sigma_*} \int_{x=0}^{\infty} e^{\alpha_*  x} dx + \frac{1}{\sqrt{2 \pi n }\sigma_*} \int_{x=0}^{\infty} e^{\alpha_*  x} \left( e^{-\frac{x^2}{2 n\sigma_*}}-1 \right) dx + O(n^{-1}) \\
    &= \frac{1}{\sqrt{2 \pi n }\sigma_* | \alpha_*|} + \frac{1}{\sqrt{2 \pi n }\sigma_*} \int_{x=0}^{\infty} e^{\alpha_*  x} \left( e^{-\frac{x^2}{2 n\sigma_*}}-1 \right) dx + O(n^{-1}).
\end{align*}
We take a closer look at the second integral in the expression above, where we get (we use the estimate $|e^t -1| \leq |t| \max \{e^t, 1 \}$ for $t \in \R$)
\begin{align*}
 \frac{1}{\sqrt{2 \pi n }\sigma_*} \left | \int_{x=0}^{\infty} e^{\alpha_*  x} \left( e^{-\frac{x^2}{2 n\sigma_*}}-1 \right) dx \right | 
  & \ll \frac{1}{n^{3/2}}     
\int_{x=0}^{\infty} x^2 e^{\alpha_*  x} \underbrace{\max \left \{ 1 , e^{-\frac{x^2}{2n \sigma_*}} \right \}}_{\leq 1} dx \\
& \ll \frac{1}{n^{3/2}}.
\end{align*}
This finishes the proof.
\end{proof}

\begin{proof}(Theorem \ref{ThmSLD})
We only prove $(i)$, since the proof of $(ii)$ can be carried out in a similar way. By Lemma \ref{Lem_AuxL2}, there exists an $\varepsilon \in (0, \infty)$ such that, for all $t \in (m, m+ \varepsilon)$, we find a pair $(\alpha, \beta) \in (- \infty, 0) \times (0, \infty)$ with
\begin{equation*}
    \left( \begin{array}{c}
         R  \\
         t 
    \end{array} \right)
     = \left( \begin{array}{c}
         \mathbb{E}_{\mu_{\alpha, \beta}} \left[ V(X) \right]  \\
          \mathbb{E}_{\mu_{\alpha, \beta}} \left[ W(X) \right] 
    \end{array} \right).
\end{equation*}
Let $(X_i)_{i \in \N}$ be an iid sequnce of random variables with $X_1 \sim \mu_{\alpha, \beta}$. Then we can introduce the following sequence of centered random variables
\begin{equation*}
    \frac{1}{\sqrt{n}} \sum_{i=1}^n \left( Y_i, Z_i \right) := \frac{1}{\sqrt{n}} \sum_{i=1}^n  \left( V(X_i) - R, W(X_i) -t\right), \quad n \in \N.  
\end{equation*}
We denote the distribution function of this sequence by $F_n$ and recall that $F_n $ has an Edgeworth expansion by Lemma \ref{Lem_AuxL3}. We now get
\begin{align*}
    \vol_{n} \left( \mathbb{B}^{(n,V)}_R \cap \left(\mathbb{B}^{(n,W)}_t \right)^c\right) 
    &= \int_{\R^n}\mathbbm{1}_{ \mathbb{B}^{(n,V)}_R }(x) \mathbbm{1}_{  \left( \mathbb{B}^{(n,W)}_t \right)^c }(x) dx_1 \ldots dx_n \\
    &= Z_{\alpha, \beta}^{n} \int_{\R^n}
   \mathbbm{1}_{ \left [ \sum_{i=1}^n V(x_i) \leq nR \right ]} \mathbbm{1}_{  \left [ \sum_{i=1}^n W(x_i) > n t \right ] }  \frac{1}{Z_{\alpha, \beta}^n} e^{ ( \alpha - \alpha) \sum_{i=1}^n V(x_i) + (\beta- \beta)  \sum_{i=1}^n W(x_i)  } dx_1 \ldots dx_n \\
   &=  Z_{\alpha, \beta}^{n} e^{-n \alpha R - n \beta t} \mathbb{E} 
   \left[ \mathbb{1}_{ \left [ \frac{1}{\sqrt{n}} \sum_{i=1}^n  Y_i \leq 0 \right ] } \mathbb{1}_{ \left[   \frac{1}{\sqrt{n}} \sum_{i=1}^n  Z_i > 0 \right] }  e^{ \sqrt{n} \alpha \frac{1}{\sqrt{n}} \sum_{i=1}^n  (Y_i-R) + \sqrt{n} \beta \frac{1}{\sqrt{n}} \sum_{i=1}^n  (Z_i-t) } \right]\\
    &= Z_{\alpha, \beta}^n e^{-n \alpha R - n \beta t} \int_{x=0}^{\infty} \int_{y= - \infty}^0 e^{- \sqrt{n} \beta x - \sqrt{n} \alpha y} dF_n(x,y).
\end{align*}
By Lemma \ref{Lem_AuxL3}, we can write
\begin{equation*}
    F_n(x,y) = G^{\Sigma}(x,y) + \sum_{j=1}^{2} n^{-j/2} F_j(x,y) + R_n(x,y),
\end{equation*}
where $G^{\Sigma}$ is the distribution function of a normal distribution with mean $0$ and variance matrix $\Sigma \in \R^{2 \times 2}$ and $R_n(x,y) = o(n^{-1})$, uniformly in $(x,y) \in \R^2$. By Lemma \ref{Lem_Asympt_Int} we get
\begin{align*}
\int_{x=0}^{\infty} \int_{y= - \infty}^0 e^{- \sqrt{n} \beta x - \sqrt{n} \alpha y} dF_n(x,y) & = 
\int_{x=0}^{\infty} \int_{y= 0}^{\infty } e^{- \sqrt{n} \beta x + \sqrt{n} \alpha y} dG^{\Sigma}(x,y) + o(n^{-1}) \\
& = \int_{x=0}^{\infty} \int_{y= 0}^{\infty } e^{- \sqrt{n} ( \beta x - \alpha y)} \frac{1}{2 \pi | \Sigma |^{1/2}} e^{ - \frac{1}{2} (x,y) \Sigma^{-1}
    \left(
    \begin{smallmatrix}
    x\\y
    \end{smallmatrix}
    \right)
    }dx dy + o(n^{-1}) \\
    &=
    \frac{1}{2 \pi n | \Sigma |^{1/2}}\int_{x=0}^{\infty}\int_{y=0}^{\infty}e^{-\beta x + \alpha y} \underbrace{ e^{ - \frac{1}{2n} (x,y) \Sigma^{-1}
    \left(
    \begin{smallmatrix}
    x\\y
    \end{smallmatrix}
    \right)}}_{=:f_n(x,y)} dx dy+o(n^{-1}).
\end{align*}
We now observe that $f_n(x,y) \leq 1$ for all $(x,y) \in \R^2$ and for all $n \in \N$. Since $e^{- \beta x + \alpha y}$ is integrable, the dominated convergence theorem yields
\begin{align*}
   \frac{1}{2 \pi n | \Sigma |^{1/2}}\int_{x=0}^{\infty}\int_{y=0}^{\infty}e^{-\beta x + \alpha y} f_n(x,y) dx dy 
   &=\frac{1}{2 \pi n | \Sigma |^{1/2}}\int_{x=0}^{\infty}\int_{y=0}^{\infty}e^{-\beta x + \alpha y}  dx dy + o(n^{-1}) \\
   &=\frac{1}{2 \pi n (- \alpha) \beta | \Sigma |^{1/2}} +o(n^{-1}).
\end{align*}

By Lemma \ref{Lem_Vol_OB}, it holds that
\begin{equation*}
    \vol_{n} \left( \mathbb{B}^{(n,V)}_R \right) = e^{ -\alpha^{*} n R }Z_{\alpha^{*},0}^{n}\frac{1}{\sqrt{2 \pi n} \sigma_{*} | \alpha_* |} \left( 1 + O \left(n^{-1/2} \right) \right), 
\end{equation*}
where $\sigma_{*}^2 = \mathbb{V}_{\mu_{\alpha_*, 0}} \left[ V(X) \right]$. This implies that
\begin{equation*}
    \frac{\vol_{n} \left( \mathbb{B}^{(n,V)}_R \cap \mathbb{B}^{(n,W)}_t \right)}{\vol_{n} \left( \mathbb{B}^{(n,V)}_R \right)} = 1 - e^{ -n \left(\varphi(\alpha^{*},0)- \varphi( \alpha, \beta) + (\alpha - \alpha^{*})R + \beta t \right)} \frac{| \alpha_*|}{\sqrt{2 \pi n} | \alpha | \beta \bar{\sigma}} \left( 1 + o(1) \right),
\end{equation*}
where
\begin{equation*}
\bar{\sigma}^2 = \frac{\sigma_*^2}{| \Sigma |} 
\end{equation*}
is the second entry in the diagonal of $\Sigma^{-1}$. Since, by construction,
\begin{equation*}
\left( \begin{array}{c}
         \alpha  \\
         \beta 
    \end{array} \right)
    =
\left(\nabla \varphi \right)^{-1} \left( \begin{array}{c}
         R  \\
         t 
    \end{array} \right) 
\quad \text{and} \quad
    \left( \begin{array}{c}
         \alpha^{*}  \\
         0 
    \end{array} \right) 
    =
\left(\nabla \varphi \right)^{-1} \left( \begin{array}{c}
         R  \\
         m 
    \end{array} \right),
\end{equation*}
we recover the definition of $\I$ and thus the estimate in \eqref{Eq_SharpDev_V_W}.

%This finishes the case where $t < W(V^{-1}(R))$. 
% It is an easy observation that, for all $n \in \N$, 
% \begin{equation*}
% \sup_{x \in \mathbb{B}^{(n,V)}_R} \frac{1}{n} \sum_{i=1}^n W(x_i) = W(V^{-1}(R)),
% \end{equation*}
% and thus, if $t \geq W(V^{-1}(R))$, we have
% \begin{equation*}
%      \frac{\vol^{(n)} \left( \mathbb{B}^{(n,V)}_R \cap \mathbb{B}^{(n,W)}_t \right)}{\vol^{(n)} \left( \mathbb{B}^{(n,V)}_R \right)} = 1.
% \end{equation*}
\end{proof}

\begin{proof}(Theorem \ref{ThmSLD_thinshell})
We prove a more general result, where instead of the $\ell_2$-norm we work with another Orlicz function $W$, such that $V$ and $W$ satisfy Assumption \ref{Ass_A}. 
We define $\varepsilon \in (0, \infty)$ to be the largest constant such that, for any $ \delta \in (0, \varepsilon)$, we find pairs $ (\alpha_1, \beta_1) \in (- \infty, 0) \times (0, \infty)$ and $(\alpha_2, \beta_2) \in (- \infty, 0)^2$ with
\begin{equation}
\label{Eq_Sol_Expectation}
    \nabla \varphi(\alpha_1, \beta_1) = \left(\begin{array}{c}
R   \\ m + \delta
    \end{array} \right)
    \quad \text{and} \quad 
      \nabla \varphi(\alpha_2, \beta_2) = \left(\begin{array}{c}
R   \\ m - \delta
    \end{array} \right).
\end{equation}
This is possible by Lemma \ref{Lem_AuxL2} and Lemma \ref{Lem_Aux4}. We now obtain
\begin{align*}
\mathbb{P} \left[ \left | \frac{1}{n} \sum_{i=1}^n W \left( X_i^{(n,V)} \right) - m \right| \geq \delta   \right] &=
    \mathbb{P} \left[  \frac{1}{n} \sum_{i=1}^n  W \left( X_i^{(n,V)} \right) \geq m + \delta   \right] +  \mathbb{P} \left[  \frac{1}{n} \sum_{i=1}^n  W \left( X_i^{(n,V)} \right) \leq m - \delta   \right].
\end{align*}
Analogously to the proof of Theorem \ref{ThmSLD}, we now get
\[
\mathbb{P} \left[ \left | \frac{1}{n} \sum_{i=1}^n W \left( X_i^{(n,V)} \right) - m \right| \geq \delta   \right] = \frac{1}{\sqrt{2 \pi n} \sigma_1}e^{-n \mathbb{I}(R,m+ \delta )} (1 + o(1))+   \frac{1}{\sqrt{2 \pi n} \sigma_2}e^{-n \mathbb{I}(R,m- \delta )} (1 + o(1)),
\]
where $\mathbb{I}$ is the same GRF as in Theorem \ref{ThmSLD} and
\[
\sigma_i^2:=\mathbb{V}_{\mu_{\alpha_i, \beta_i}}\left[ V(X)\right], \quad i=1,2.
\]
The claim now follows by defining
\[
\tilde{\sigma}:= \begin{cases}
    \sigma_1, & \text{if} \quad \mathbb{I}(R, m+ \delta) > \mathbb{I}(R, m - \delta) \\
    \sigma_2, & \text{if} \quad \mathbb{I}(R, m+ \delta) < \mathbb{I}(R, m - \delta) \\
    \frac{\sigma_1 \sigma_2}{\sigma_1+ \sigma_2}, & \text{if} \quad \mathbb{I}(R, m+ \delta) = \mathbb{I}(R, m - \delta).
\end{cases}
\]
\end{proof}

\begin{proof}(Theorem \ref{Thm_SLLN_CLT})
We start with the proof of the Berry--Esseen result, where we recall that there exists an $\alpha_* \in (- \infty, 0)$ such that
\begin{equation*}
  \left( \begin{array}{c}
         \mathbb{E}_{\mu_{\alpha_*, 0}}[V(X)]   \\
         \mathbb{E}_{\mu_{\alpha_*, 0}}[W(X)] 
    \end{array} \right)
    = \left( \begin{array}{c}
        R   \\
         m
    \end{array} \right).
\end{equation*}
This leads to
\begin{align*}
\mathbb{P} \left[ \frac{1}{\sqrt{n}}  \sum_{i=1}^n \left [ W \left( X_i^{(n,V)} \right) - m \right ] \leq t \right] &= \frac{1}{\vol_{n}  \left( \mathbb{B}_R^{(n,V)} \right) } \int_{\R^n } \mathbb{1}_{  \mathbb{B}_R^{(n,V)}}(x) \mathbb{1}_{ \left[\frac{1}{ \sqrt{n}}\sum_{i=1}^n \left( W(x_i) - m \right) \leq t \right]} dx_1 \ldots d x_n \\
&= \frac{Z_{\alpha_*}^{-n} e^{ n \alpha_* R}}{\vol_{n} \left( \mathbb{B}_R^{(n,V)} \right)} \int_{\R^n } \mathbb{1}_{ \left[ \frac{1}{\sqrt{n}} \sum_{i=1}^n \left[ V(x_i) - R\right] \leq 0 \right]} \mathbb{1}_{ \left[\frac{1}{ \sqrt{n}}\sum_{i=1}^n \left[ W(x_i) - m \right] \leq t \right]} e^{\alpha_* \sum_{i=1}^n \left[ V(x_i) - R \right]}dx_1 \ldots d x_n \\
&= 
\frac{Z_{\alpha_*}^{-n} e^{ n \alpha_* R}}{\vol_{n} \left(  \mathbb{B}_R^{(n,V)} \right) } \int_{x=- \infty }^0 \int_{y = - \infty}^t  e^{\sqrt{n}\alpha_* x }d F_n(x,y),
\end{align*}
where $F_n$ is the distribution function of the vector
\begin{equation*}
    \frac{1}{\sqrt{n}} \sum_{i=1}^n \left( V(X_i) - R, W(X_i) - m \right),
\end{equation*}
with $(X_i)_{i \in \N}$ being an iid sequence with $X_1$ being distributed according to $\mu_{\alpha_*, 0}$. By Lemma \ref{Lem_Vol_OB}, it holds that
\begin{equation*}
\vol_{n} \left( \mathbb{B}^{(n,V)}_R \right) = e^{ -\alpha^{*} n R }Z_{\alpha^{*},0}^{n}\frac{1}{\sqrt{2 \pi n} \sigma_{*} | \alpha_* |}(1+ O(n^{-1/2})).
\end{equation*}

Together with Lemma \ref{Lem_Asympt_Int} we now get (let $\Sigma=(\sigma_{i,j})_{i,j=1,2}$ and $\Sigma^{-1} = (\rho_{i,j})_{i,j=1,2}$ and recall the identity $ \rho_{2,2} = \frac{\sigma_{2,2}}{| \Sigma|}$)
\begin{equation}
\begin{split}
\label{Eq_Prob_Clt}
\mathbb{P} \left[ \frac{1}{\sqrt{n}}  \sum_{i=1}^n \left [ W \left( X_i^{(n,V)} \right) - m \right ] \leq t \right] &= \frac{\sqrt{n} | \alpha_* | \sigma_* }{\sqrt{2 \pi } | \Sigma |^{1/2}} \int_{x=0}^{\infty} \int_{y=- \infty}^{t} e^{- \sqrt{n} \alpha_* x } e^{-(x,y) \Sigma^{-1} \left( \begin{smallmatrix}
    x\\y
    \end{smallmatrix} \right) } d x dy + O(n^{-1/2}) \\
&= \frac{\sqrt{\rho_{2,2}}}{\sqrt{2 \pi } } \int_{x=0}^{\infty} \int_{y=- \infty}^{t} e^{-  x } e^{-\left(\frac{x}{\sqrt{n}},y \right) \Sigma^{-1} \left( \begin{smallmatrix}
    \frac{x}{\sqrt{n}}\\y
    \end{smallmatrix} \right) }d y dx + O(n^{-1/2}).
\end{split}
\end{equation}
We take a closer look at the integral above, where we get
\begin{align*}
\int_{x=0}^{\infty} \int_{y=- \infty}^{t} e^{-  x } e^{-\left(\frac{x}{\sqrt{n}},y \right) \Sigma^{-1} \left( \begin{smallmatrix}
    \frac{x}{\sqrt{n}}\\y
    \end{smallmatrix} \right) } d y dx 
    &=    
    \int_{x=0}^{\sqrt{n}} e^{-  x } \int_{y=- \infty}^{t}  e^{-\left(\frac{x}{\sqrt{n}},y \right) \Sigma^{-1} \left( \begin{smallmatrix}
    \frac{x}{\sqrt{n}}\\y
    \end{smallmatrix} \right) }d y dx  + \int_{x=\sqrt{n}}^{\infty} e^{-  x } \int_{y=- \infty}^{t}  e^{-\left(\frac{x}{\sqrt{n}},y \right) \Sigma^{-1} \left( \begin{smallmatrix}
    \frac{x}{\sqrt{n}}\\y
    \end{smallmatrix} \right) }d y dx
\\
&= \int_{x=0}^{\sqrt{n}} e^{-  x } \int_{y=- \infty}^{t}  \exp \left( - \frac{x^2 \rho_{1,1}}{2 n } - \frac{xy \rho_{2,1}}{\sqrt{n} } - \frac{y^2 \rho_{2,2}}{2}\right)d y dx + O \left( n^{-1/2}\right) \\
& \qquad \pm \int_{x=0}^{\sqrt{n}} e^{-x} \int_{y=- \infty}^{t} \exp \left( - \frac{y^2 \rho_{2,2}}{2}\right)dy dx \\
& = 
\int_{x=0}^{\sqrt{n}} e^{-  x } \int_{y=- \infty}^{t}  \exp \left( - \frac{x^2 \rho_{1,1}}{2n} - \frac{xy \rho_{2,1}}{\sqrt{n} } - \frac{y^2 \rho_{2,2}}{2}\right) - \exp \left( - \frac{y^2 \rho_{2,2}}{2}\right)d y dx  \\
& \qquad + \int_{x=0}^{\sqrt{n}} e^{-x} \int_{y=- \infty}^{t} \exp \left( - \frac{y^2 \rho_{2,2}}{2}\right)dy dx + O \left( n^{-1/2}\right). \\
\end{align*}
We aim to show that the first integral in the expression above is of order $O(n^{-1/2})$. We recall the trivial estimate
\begin{equation*}
    \left | e^t - 1 \right| \leq |t|  \left( 1 + e^t \right), \quad t \in \R.
\end{equation*}
This yields (we use that $ \left | \frac{x^2 \rho_{1,1}}{n} + \frac{xy \rho_{2,1}}{\sqrt{n}} \right| \ll \frac{x}{\sqrt{n}} ( |y| +1)$)
\begin{align*}
\left | \int_{x=0}^{\sqrt{n}} e^{-  x } \int_{y=- \infty}^{t}  e^{ - \frac{y^2 \rho_{2,2}}{2}} \left[ \exp \left( - \frac{x^2 \rho_{1,1}}{2n} - \frac{xy \rho_{2,1}}{\sqrt{n} } \right)  - 1 \right ] d y dx \right | 
& \ll \frac{1}{\sqrt{n}}
 \int_{x=0}^{\sqrt{n}} x e^{-  x } \underbrace{\int_{y=- \infty}^{t} (|y| +1) e^{ - \frac{y^2 \rho_{2,2}}{2} - \frac{x y}{\sqrt{n}} } dy }_{ \ll 1}dx \\
 & \ll \frac{1}{\sqrt{n}}  \int_{x=0}^{\sqrt{n}} x e^{-  x } dx \\
 & \ll \frac{1}{\sqrt{n}}. 
\end{align*}
From that we infer
\begin{align*}
 \int_{x=0}^{\infty} \int_{y=- \infty}^{t} e^{-  x } e^{-\left(\frac{x}{\sqrt{n}},y \right) \Sigma^{-1} \left( \begin{smallmatrix}
    \frac{x}{\sqrt{n}}\\y
    \end{smallmatrix} \right) } d y dx &= \int_{x=0}^{\infty} e^{-x} \int_{y=- \infty}^t e^{- \frac{y^2 \rho_{2,2}}{2} } dy dx + O \left( n^{-1/2}\right) \\
    &= \int_{y=- \infty}^t e^{- \frac{y^2 \rho_{2,2}}{2} } dy + O \left( n^{-1/2}\right).
\end{align*}
Combining this estimate with Equation \eqref{Eq_Prob_Clt} gives us
\begin{align*}
\mathbb{P} \left[ \frac{1}{\sqrt{n}}  \sum_{i=1}^n \left [ W \left( X_i^{(n,V)} \right) - m \right ] \leq t \right] &= \phi(t) +  O \left( n^{-1/2}\right),
\end{align*}
where $\phi$ is the cumulative distribution function of a normal distribution with mean $0$ and variance $ \rho_{2,2}^{-1}$. 
\par{}
Next we prove the strong law of large numbers using the convergence Borel--Cantelli lemma. To that end, let $\varepsilon \in (0, \infty)$, where we will show that
\begin{equation*}
    \sum_{n=1}^{\infty} \mathbb{P} \left[ \left | \frac{1}{n} \sum_{i=1}^n W \left( X_i^{(n,V)} \right) - m  \right | > \varepsilon \right] < \infty.
\end{equation*}
Let $n \in \N$ be large, where we have (using an analogous argument as in the first part of this proof)
\begin{align*}
 \mathbb{P} \left[ \left | \frac{1}{n} \sum_{i=1}^n W \left( X_i^{(n,V)} \right) - m  \right | > \varepsilon \right] &=  \int_{x=0}^{\infty} \int_{|y| \geq \sqrt{n} \varepsilon } e^{- \alpha_* \sqrt{n} x }d F_n(x,y) \left( 1 + O \left(n^{-1/2} \right) \right) \\
 & \ll \int_{x=0}^{\infty} \int_{|y| \geq \sqrt{n} \varepsilon } e^{- \alpha_* \sqrt{n} x }d F_n(x,y).
\end{align*}
Since 
\begin{equation*}
  \mathbb{E}_{\mu_{\alpha_*, 0}} \left[ \left | \left | \left(V(X),W(X) \right) \right | \right |_2^3 \right] < \infty,   
\end{equation*}
we can write
\begin{equation*}
    F_n(x,y)= G^{\Sigma}(x,y) + \sum_{j=1}^3 n^{-j/2}F_j(x,y) + R_n(x,y),
\end{equation*}
with $R_n(x,y)=o(n^{-3/2})$, uniformly in $(x,y) \in \R^2$ and the density of $F_j$ is of the form 
$g^{\Sigma}(x,y) \pi_j(x,y)$ with $\pi_j$ having degree $3j$. This implies
\begin{align*}
\int_{x=0}^{\infty} \int_{|y| \geq \sqrt{n} \varepsilon } e^{- \alpha_*  \sqrt{n} x }d F_n(x,y) & = \int_{x=0}^{\infty} \int_{|y| \geq \sqrt{n} \varepsilon } e^{- \alpha_* \sqrt{n} x }d G^{\Sigma}(x,y) +  \sum_{j=1}^3 n^{-j/2} \int_{x=0}^{\infty} \int_{|y| \geq \sqrt{n} \varepsilon } e^{- \alpha_* \sqrt{n} x }d F_j(x,y)  + o(n^{-3/2}) \\
& = \int_{x=0}^{\infty} \int_{|y| \geq \sqrt{n} \varepsilon } e^{- \alpha_* \sqrt{n} x }d G^{\Sigma}(x,y) + o(n^{-3/2}) \\
& \ll \int_{x=0}^{\infty} e^{- \alpha_* \sqrt{n} x } \int_{|y| \geq \sqrt{n} \varepsilon } e^{- c ( x^2 + y^2) }  d y dx + o(n^{-3/2})\\
& \ll 
\int_{x=0}^{\infty} e^{- \alpha_* \sqrt{n} x - cx^2 } e^{- c' n} dx + o(n^{-3/2}) \\
& = o(n^{-3/2}),
\end{align*}
where, in the second last line, we used that
\begin{equation*}
\int_{|y| \geq \sqrt{n} \varepsilon } e^{- c  y^2 }  d y  \ll e^{-c' n} ,
\end{equation*}
for some constant $c' \in (0, \infty)$. This implies that
\begin{equation*}
\sum_{n=1}^{\infty} \mathbb{P} \left[ \left | \frac{1}{n} \sum_{i=1}^n W \left( X_i^{(n,V)} \right) - m  \right | > \varepsilon \right] = \sum_{n=1}^{\infty} o(n^{-3/2}) < \infty.    
\end{equation*}
\end{proof}

\begin{proof}(of Corollary \ref{Cor_Clt})
We recall that
\begin{align*}
\frac{\vol_{n}\left( \mathbb{B}_{R}^{(n,V)} \cap \mathbb{B}_{m}^{(n,W)}\right)}{\vol_{n} \left(\mathbb{B}_{R}^{(n,V)} \right)} &= \mathbb{P} \left[  \frac{1}{n} \sum_{i=1}^n W \left( X_i^{(n,V)} \right) \leq m \right] \\
&= \mathbb{P} \left[  \frac{1}{\sqrt{n}} \sum_{i=1}^n \left[ W \left( X_i^{(n,V)} \right) - m \right] \leq 0 \right],
\end{align*}
where $X^{(n,V)}$ is uniformly distributed on $\mathbb{B}_R^{(n,V)}$. The last probability in the expression above converges to $\frac{1}{2}$ by Theorem \ref{Thm_SLLN_CLT}, which proves the claim.
\end{proof}

% \begin{proof}(of Corollary \ref{Cor_subquad})
% If $V$ satisfies $\lim_{x \rightarrow \infty} \frac{V(x)}{x^2}= \infty$, the claim follows from \cite[Section 3.3.1]{kim2022}. 
% Thus we only need to treat the case where $\lim_{x \rightarrow \infty} \frac{V(x)}{x^2}=0$, or equivalently $\lim_{x \rightarrow \infty} \frac{x^2}{V(x)}= \infty$. Let $[a,b ] \subseteq \R_+$ be an arbitrary interval, then for $X^{(n,V)} \sim \U \left( \mathbb{B}_R^{(n,V)} \right)$ we have
% \begin{align*}
%     \mathbb{P} \left[ \left | \left | X^{(n,V)} \right | \right |_2 \in [a,b]  \right] & =  \mathbb{P} \left[ \left | \left | X^{(n,V)} \right | \right |_2 \leq b  \right] - \mathbb{P} \left[ \left | \left | X^{(n,V)} \right | \right |_2 \leq a  \right] \\
%     & =: b_n - a_n.
% \end{align*}
% For $b_n$ we get
% \begin{align*}
%     b_n &= \mathbb{P} \left[ \left | \left | X^{(n,V)} \right | \right |_2 \leq b  \right] \\
%     &= \frac{\vol^{(n)}\left( \mathbb{B}^{(n,2)}_b \right)}{\vol^{(n)}\left( \mathbb{B}^{(n,V)}_R \right)} \mathbb{P} \left[ \frac{1}{n} \sum_{i=1}^n V\left( X_i^{(n,2)} \right) \leq R \right],
% \end{align*}
% where $X^{(n,2)} \sim \U \left( \mathbb{B}^{(n,2)}_b \right)$ and $ \mathbb{B}^{(n,2)}_b$ denotes the $\ell^2$ ball with radius $b$.
% \end{proof}

% % % % % % % % % % % % % % % %
\subsection*{Acknowledgement}
% % % % % % % % % % % % % % % %
LF and JP are supported by the Austrian Science Fund (FWF) Project P32405 \textit{Asymptotic geometric analysis and applications}. LF is also supported by the FWF Project P 35322 \textit{Zufall und Determinismus in Analysis und Zahlentheorie}.

\bibliographystyle{plain}
\bibliography{bibliographie.bib}

\bigskip
\bigskip

\medskip

\small

\noindent \textsc{Lorenz Fr\"uhwirth:} Faculty of Computer Science and Mathematics, University of Passau, Dr.-Hans-Kapfinger-Straße 30, 94032 Passau, Germany. 

\noindent
\textit{E-mail:} \texttt{lorenz.fruehwirth@math.tugraz.at}

\medskip

\small

\noindent \textsc{Joscha Prochno:} Faculty of Computer Science and Mathematics, University of Passau, Dr.-Hans-Kapfinger-Straße 30, 94032 Passau, Germany. 

\noindent
\textit{E-mail:} \texttt{joscha.prochno@uni-passau.de}

\end{document}